\documentclass[11pt,a4paper]{article}
\usepackage{hyperref}

\makeatletter
\let\Hy@raisedlink@orig\Hy@raisedlink
\def\Hy@eqprefix{equation.}
\def\Hy@raisedlink#1{%
  \edef\Hy@tmp{\noexpand\in@{\Hy@eqprefix}{\@currentHref}}\Hy@tmp
  \ifin@\ifinner
    \Hy@raisedlink@orig{\hbox to\z@{\hskip-\dimexpr\linewidth-2em\relax#1\hss}}%
  \else\Hy@raisedlink@orig{#1}\fi
  \else\Hy@raisedlink@orig{#1}\fi}
\makeatother
\usepackage{amssymb,cite}
\usepackage{amsthm}
\usepackage{graphicx}
\usepackage{color,enumerate}
\usepackage{mathtools}
\mathtoolsset{showonlyrefs=true}
\usepackage{bm}
\usepackage{tikz}
\usetikzlibrary{decorations.pathreplacing}
\usepackage[margin=1.in]{geometry}
\usepackage{mathrsfs}
\newtheorem{theorem}{Theorem}[section]
\newtheorem{lemma}[theorem]{Lemma}
\newtheorem{proposition}[theorem]{Proposition}
\newtheorem{corollary}[theorem]{Corollary}
\theoremstyle{definition}
\newtheorem{example}[theorem]{Example}
\newtheorem{remark}[theorem]{Remark}

\newcommand{\tu}{\tilde{u}} 

\newcommand{\bydef}{\,\stackrel{\mbox{\tiny\textnormal{\raisebox{0ex}[0ex][0ex]{def}}}}{=}\,} 

\newcommand{\R}{\mathbb{R}}
\newcommand{\Z}{\mathbb{Z}}

\newcommand{\cL}{\mathcal{L}}
\newcommand{\cN}{\mathcal{N}}
\newcommand{\cQ}{\mathcal{Q}}
\newcommand{\cX}{\mathcal{X}}
\newcommand{\cY}{\mathcal{Y}}
\newcommand{\bA}{\bar{A}}
\newcommand{\bu}{\bar{u}}
\newcommand{\bv}{\bar{v}}
\newcommand{\ichi}{\mathbf{1}}
\newcommand{\dd}{\mbox{d}}
\newcommand{\tV}{\tilde{V}}

\newcommand{\m}{|\FK|} % total number of finite modes: |\FK| = K_1 K_2 ... K_d
\newcommand{\bbm}{\bm{K}}
\newcommand{\bN}{\bm{N}}
\newcommand{\bL}{\bm{L}}
\newcommand{\bk}{\bm{k}}
\newcommand{\bj}{\bm{j}}
\newcommand{\bn}{\bm{n}}

\newcommand{\bU}{\bar{U}}
\newcommand{\bV}{\bar{V}}

\newcommand{\kk}{\text{\textbf{k}}}
\newcommand{\qk}{{(\kk \bL)^{2q}}}

\newcommand{\FN}{\bm{F_N}}
\newcommand{\FK}{\bm{F_K}}
\newcommand{\FKa}{\bm{F}_{\bbm_a}}
\newcommand{\llbracket}{\mathopen{[\mkern-3mu[}}
\newcommand{\rrbracket}{\mathclose{]\mkern-3mu]}}

\begin{document}
\title{Rigorous integration of semilinear parabolic PDEs: forward evolution bounds and projected boundary value problems}
\date{\today}
\author{%
Jean-Philippe~Lessard \thanks{Department of Mathematics and Statistics, McGill University, 805 Sherbrooke West, Montreal, QC H3A 0B9, Canada (\texttt{jp.lessard@mcgill.ca})
}%
\and
Akitoshi~Takayasu \thanks{Institute of Systems and Information Engineering, University of Tsukuba, 1-1-1 Tennodai, Tsukuba, Ibaraki 305-8573, Japan (\texttt{takitoshi@risk.tsukuba.ac.jp})
}
}
\maketitle

\begin{abstract}
In this work, we present a rigorous integrator for initial and boundary value problems for semilinear parabolic PDEs. The method builds on earlier semigroup-based approaches while improving the bounds used in the fixed-point argument and extending the class of problems that can be treated. In particular, it controls the solution operator without requiring backward integration of the variational equation and allows projected boundary value problems to be studied through a simplified construction of an approximate inverse for the linearized operator. To illustrate the scope of the method, we present three applications to the two-dimensional Ohta--Kawasaki equation. For two initial value problems, we prove global existence and obtain explicit enclosures valid for all sufficiently large times around numerical approximations of stripe and spot equilibria. These results use a fixed-point formulation on an unbounded time interval, avoiding the construction of trapping regions. We also rigorously solve a projected boundary value problem with endpoint conditions chosen from linear approximations of the unstable and stable spaces of two saddle equilibria. This provides a step toward the rigorous study of heteroclinic saddle-to-saddle connections.
\end{abstract}

\noindent\textbf{Keywords:} Computer-assisted proofs, semilinear parabolic PDEs, rigorous time integration, evolution operators, projected boundary value problems, global existence.

\medskip
\noindent\textbf{Mathematics Subject Classification (2020):} Primary 65G40, 65M15. Secondary 65M70, 35K58, 35B40, 37L65.

\section{Introduction}

Numerical simulations of semilinear parabolic partial differential equations (PDEs) can suggest the formation of patterns and transitions between equilibria. Establishing these observations rigorously requires computable error bounds between a numerical trajectory and a true solution. A key challenge is to control the accumulation of these bounds over long time intervals, including trajectories whose initial and terminal values are constrained.

In this paper, we develop a constructive method to rigorously solve initial value problems (IVPs) and projected boundary value problems (BVPs) for semilinear parabolic PDEs. Building on the semigroup-based integrator of \cite{MR4852997}, we improve the estimates used to propagate errors across time steps, avoiding backward integration of the linearized equations and retaining information that is lost when only scalar bounds are propagated. We use these estimates to validate trajectories over finite intervals, obtain global enclosures around approximate equilibria, and solve problems with prescribed endpoint constraints.

We consider scalar equations of the form
\begin{equation}\label{eq:general_PDE}
 u_t=(\lambda_0+\lambda_1\Delta+\lambda_2\Delta^2)u+\Delta^qN(u),
\end{equation}
where $q\in\{0,1\}$ and $N$ is a polynomial satisfying $N(0)=DN(0)=0$. The parameters $\lambda_0,\lambda_1,\lambda_2\in\R$ are chosen so that the highest-order linear term is dissipative: either $\lambda_2<0$, or $\lambda_2=0$ and $\lambda_1>0$. We also assume that the derivative order $2q$ in the nonlinear term is strictly below the order of the linear operator. Given $d\in\{1,2,3\}$ and $L_1,\ldots,L_d>0$, the spatial domain is
\[
 \Omega\bydef\prod_{j=1}^d[-\pi/L_j,\pi/L_j]\subset\R^d,
\]
with periodic boundary conditions. This class includes the Swift--Hohenberg, Cahn--Hilliard, and Ohta--Kawasaki equations. To simplify the representation and the computations, we impose even symmetry in each spatial coordinate. Extension to general periodic solutions is not pursued here.

Under these assumptions, expanding the solution in a cosine Fourier series leads to an infinite system of ordinary differential equations for the time-dependent coefficients $u(t)=(u_{\bk}(t))_{\bk\ge0}$
\begin{equation}\label{eq:PDEs}
 \dot u=\cL u+\cQ\cN(u),\qquad u(0)=u_0,\qquad t\in J\bydef[0,\tau],
\end{equation}
where $\cL$ is the diagonal linear operator, $\cQ$ represents $\Delta^q$, and $\cN$ is the polynomial nonlinearity in Fourier coordinates. The phase space is the Banach algebra $\ell_\nu^1$ introduced in Section~\ref{sec:function-space}. Its geometric weight $\nu\ge1$ can encode exponential decay of the Fourier coefficients, although the applications in Section~\ref{sec:OK_application} use $\nu=1$. For IVPs, $u_0$ is prescribed. For the projected BVP, $\tau$ is a prescribed time of flight and the initial value is one of the unknowns.

We first compute a numerical approximation $\bu$ by projecting \eqref{eq:PDEs} onto finitely many Fourier modes and using Chebyshev series expansions in time. Let $U(t,s)$ denote the evolution operator generated by the linearization $\cL+\cQ D\cN(\bu(t))$. Applying the variation of constants formula to \eqref{eq:PDEs} gives the fixed-point operator
\[
 (T(u))(t)\bydef U(t,0)u_0+
 \int_0^t U(t,s)\cQ\bigl(\cN(u(s))-D\cN(\bu(s))u(s)\bigr)\,\dd s.
\]
Its fixed points are solutions of the IVP. Since $U(t,s)$ is defined using the fixed approximation $\bu$, the derivative of the nonlinear remainder vanishes at $\bu$, and hence $DT(\bu)=0$. The idea is then to prove that $T$ is a contraction on a small ball centered at $\bu$ in a suitable Banach space of time-dependent Fourier coefficients. Bounds on the defect $T(\bu)-\bu$ and on the higher-order nonlinear remainder give computable sufficient conditions for the ball to be invariant and for $T$ to be a contraction. Its unique fixed point provides a solution of the IVP, and the radius gives an explicit error bound in the trajectory norm. In particular, the bound controls the spatial supremum error at every time in the validated interval.

Semigroup theory guarantees the existence of $U(t,s)$ under the parabolic assumptions \cite{MR3639578,MR3683781}, but the validation requires explicit bounds on its action on initial data and on the integral terms in $T$. To obtain them, we separate finitely many Fourier modes, enclosed with interval arithmetic, from the remaining modes, which form the infinite tail and are controlled by parabolic smoothing. Following the semigroup comparison of \cite{JBMaxime}, we bound the integrated finite propagator by comparing the time-dependent linearization with a constant matrix. The discrepancy is enclosed rigorously, and only forward integration of the variational equation is required.

An important feature of the multi-step construction is that the finite propagator matrices are multiplied before taking absolute values. This retains cancellations between contributions from different modes, which would be lost by multiplying scalar bounds at each step. The interaction between the finite modes and the infinite tail is bounded separately as a correction to these products. Moreover, the smoothing estimates include the derivative multiplier $\cQ$, allowing derivatives in the nonlinear term to be treated directly. These constructions give the one-step and multi-step estimates of Section~\ref{sec:multi-stepping}, which are then used to compute the defect and contraction bounds. 

For global existence, we join the validated finite-time trajectory to a fixed-point problem on an unbounded interval centered at an approximate equilibrium $\bu_*$. The residual of $\bu_*$ is included in the defect bound, so no separate equilibrium validation or trapping-region construction is required. If the finite-time endpoint enclosure is too large for the final contraction test, we validate intermediate trajectories to obtain an incoming enclosure that satisfies the test.

For projected BVPs, we constrain the initial endpoint to a finite approximation of the unstable space of the origin and the terminal endpoint to an affine space defined by approximate stable directions at a numerical saddle. The initial unstable dimension exceeds the target's numerical Morse index by one, and one scalar amplitude condition completes the endpoint equations. Since $DT(\bu)=0$, the trajectory correction can be eliminated from the linearized BVP. This allows us to construct an inverse by numerically inverting only a finite matrix for the endpoint parameters and bounding the errors rigorously.

The construction is closely related to \cite{JBMaxime,JBMaxime2}. A final infinite time step also appears in \cite{JBMaxime2}, where it is used to prove convergence to stable hyperbolic equilibria. Here the infinite-step argument uses the modewise propagation bounds and includes the residual of the approximate equilibrium. Earlier semigroup arguments using trapping regions include \cite{MizuguchiGlobal2017,MR4852997}, and rigorous computation of attracting equilibria for the Kuramoto-Sivashinsky equation appears in \cite{ZgliczynskiAttracting2002}.

Our approach belongs to the field of computer-assisted proofs (CAPs) in nonlinear analysis and dynamical systems; we refer the interested reader to \cite{notices_jb_jp,MR3971222,MR3990999} for an overview. Several approaches have been developed to obtain CAPs for the flow of parabolic PDEs. Early verification using finite-element and fixed-point techniques appears in \cite{MR1284278}. Topological methods based on covering relations and self-consistent bounds provide $C^0$ integration \cite{MR4113209,MR2049869,MR2788972,MR3167726,MR3408840,MR3338669}, while $C^1$ integration also controls the variational equation \cite{MR2728184}. Other methods use evolution operators and semigroup theory \cite{MR3639578,MR3683781,MR4444839,MR4388950}, finite element methods with Nakao's projection technique \cite{MR4134381,MR4015319}, or self-consistent bounds \cite{MR4257868}. Spectral approaches combine Fourier expansions in space with Chebyshev expansions or interpolation in time \cite{MR4379799,CadiotLessard2025,JBMaxime,JBMaxime2}.

Connecting-orbit problems motivate the projected BVP extension. Computer-assisted results for parabolic PDEs include connections in the one-dimensional Ohta--Kawasaki model \cite{MR3773757}, connections to stable equilibria using Fourier-Taylor parameterizations \cite{MR3906120}, and connections for the Kuramoto-Sivashinsky equation \cite{MR4938703,WilczakZgliczynski2024KSII}. Related forcing results use rigorously computed relative indices of critical points \cite{MorseFloer2024}. Completing a connection to a saddle requires placing the terminal endpoint in its nonlinear stable manifold. Combining the present projected BVP with validated local invariant-manifold parameterizations \cite{MR3906120,MR4668616} is a further step toward such a proof, as in the ODE approach of \cite{MR3207723}.

We illustrate the method with three trajectories of the two-dimensional Ohta--Kawasaki equation \cite{Ohta-Kawasaki} at $\varepsilon=0.39$. In the zero-mean, coordinatewise-even phase space, the origin has four unstable directions. For two prescribed initial data near the origin, we prove global existence and eventual uniform enclosures around numerical stripe and spot approximations (Theorems~\ref{thm:OK_GE} and~\ref{thm:OK_GE_spot}). For the third trajectory, we validate a finite orbit segment satisfying projected endpoint conditions associated with the origin and a saddle approximation with numerical Morse index three (Theorem~\ref{thm:OK_BVP}). The global results do not prove convergence to exact equilibria, and the projected endpoint conditions do not establish a heteroclinic connection. Figures~\ref{fig:bifurcation_diagram} and~\ref{fig:dynamics_overview} show the numerical patterns and the three trajectories.

\begin{figure}[h!]
	\begin{minipage}[b]{\linewidth}
		\centering
		\includegraphics[width=0.7\textwidth]{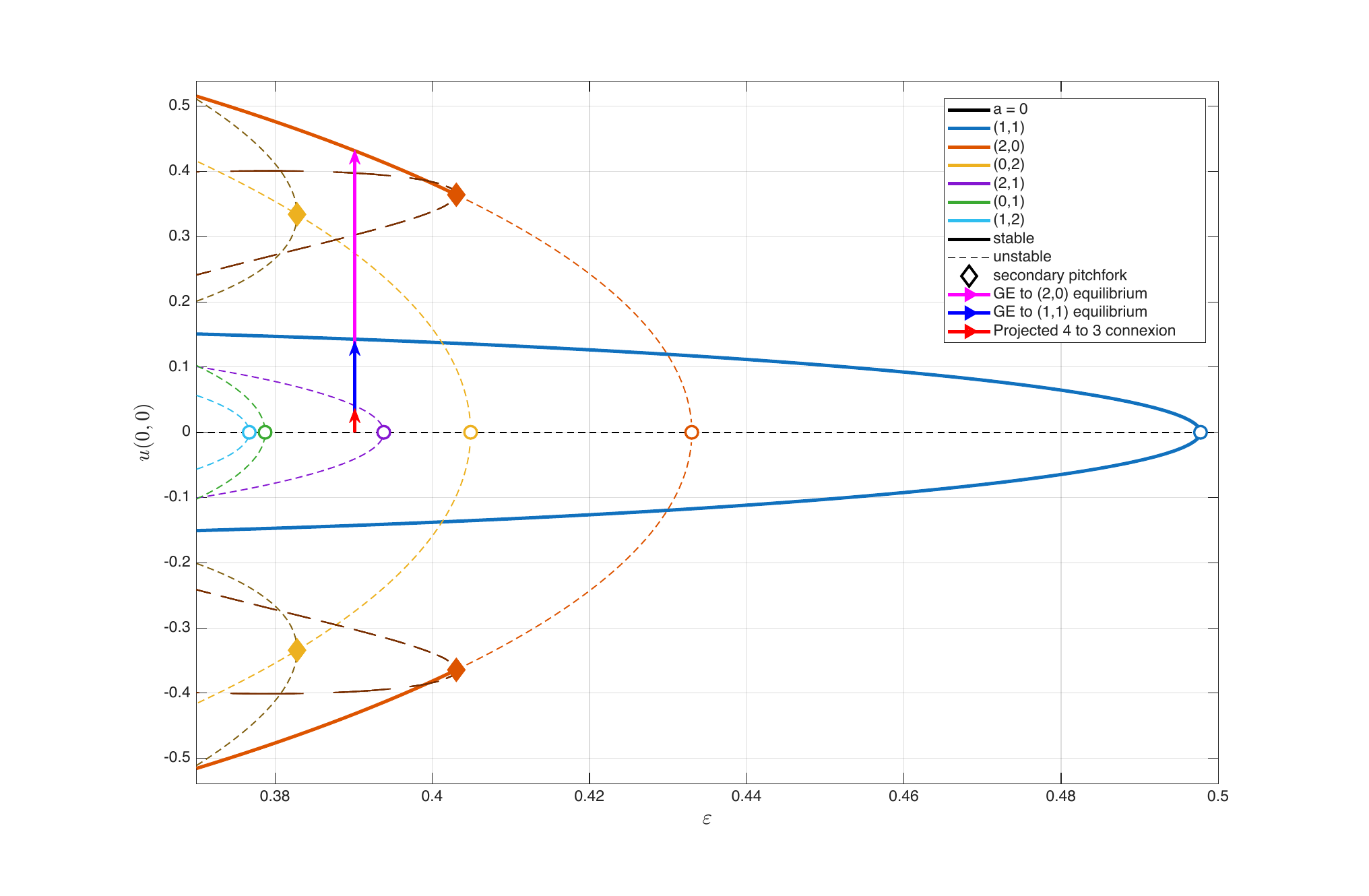}
	\end{minipage}
    \vspace{-1.5cm}
\caption{Numerical bifurcation diagram of two-dimensional Ohta--Kawasaki steady states, showing $u(0,0)$ against $\varepsilon$ for $\sigma=1$, zero mean, and $\bL=(1,1.1)$. The two darker curves emerge from secondary pitchforks on the $(2,0)$ and $(0,2)$ branches, numerically located near $\varepsilon=0.4031$ and $\varepsilon=0.3828$. At $\varepsilon=0.39$, the origin has Morse index four. The blue and magenta trajectories correspond to the IVPs rigorously solved with eventual enclosures near the stripe and spot approximations. The red trajectory corresponds to the projected BVP associated with the Morse-index-three saddle $u_3$. The rigorous conclusions are stated in Theorems~\ref{thm:OK_GE}, \ref{thm:OK_BVP}, and~\ref{thm:OK_GE_spot}.}\label{fig:bifurcation_diagram}
\end{figure}

\begin{figure}[h!]
	\begin{minipage}[b]{\linewidth}
		\centering
		\includegraphics[width=\textwidth]{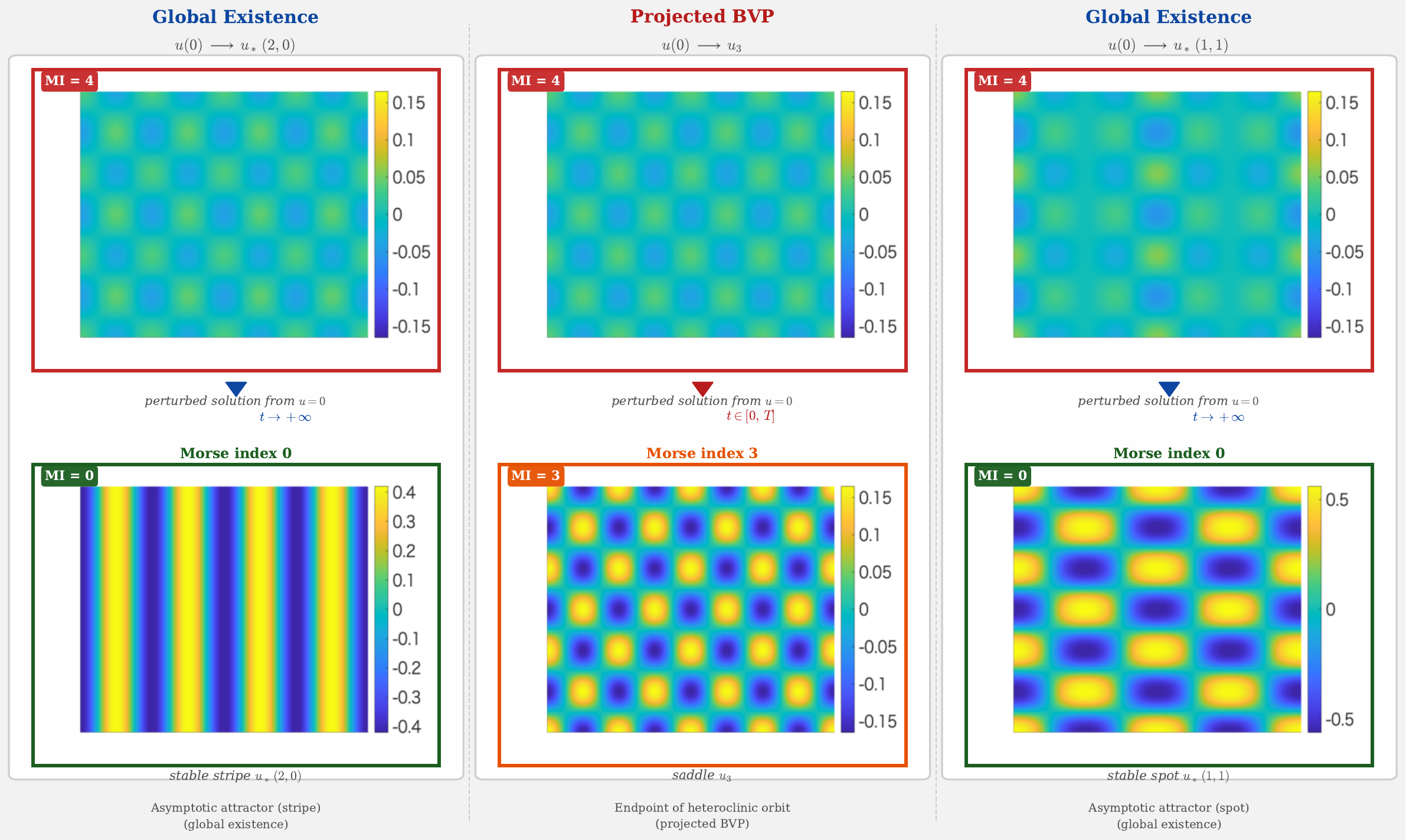}
	\end{minipage}
    \vspace{-.5cm}
\caption{Overview of the three trajectories for the two-dimensional Ohta--Kawasaki equation at $\varepsilon=0.39$. All start near the origin, which has Morse index four. Left: the stripe IVP, rigorously enclosed for all positive times with an eventual enclosure around $\bu_*^{(2,0)}$. Middle: the finite orbit segment satisfying projected endpoint conditions associated with the origin and the saddle $u_3$, whose dominant mode is $(2,1)$. Right: the spot IVP, rigorously enclosed for all positive times with an eventual enclosure around $\bu_*^{(1,1)}$. The numerical trajectory passes near the $(0,2)$ saddle.}\label{fig:dynamics_overview}
\end{figure}

The paper is organized as follows: Section~\ref{sec:functional_analytic_setup} introduces the function spaces and the fixed-point formulation. In Section~\ref{sec:multi-stepping}, we derive the one-step evolution bounds and their composition across time steps. Section~\ref{sec:global_in_time} develops the global enclosure construction, and Section~\ref{sec:BVP} presents the projected BVP and its reduced inverse. The Ohta--Kawasaki applications are given in Section~\ref{sec:OK_application}. Appendix~\ref{sec:Newton-refinement} describes the Newton refinement of the numerical trajectories. Appendices~\ref{app:linear_validation_details} and~\ref{app:global_details} contain the finite-time proofs and the details of the global construction.

\section{Functional analytic setup} \label{sec:functional_analytic_setup}
In this section, we write the PDE as an infinite system of ODEs for its Fourier coefficients and describe the numerical approximation. We then introduce the function spaces (Section~\ref{sec:function-space}), write the system in operator form (Section~\ref{sec:sequence_operators}), and construct the fixed-point map (Section~\ref{sec:fixed-point-operator}).

Recall that the initial value problem associated with \eqref{eq:general_PDE} is given by
\begin{equation} \label{eq:IVP_PDE}
	\begin{cases}
		u_t = (\lambda_0 + \lambda_1 \Delta + \lambda_2 \Delta^2)u + \Delta^q N(u), & t>0,\quad x\in \Omega,\\
		u(0,x) = u_0(x),& x \in \Omega,
	\end{cases}
\end{equation}
where $u_0$ is the prescribed initial condition and $x=(x_1,\dots,x_d)$. We assume that $u_0$ is periodic and even in each spatial coordinate, and that its Fourier coefficient sequence belongs to $\ell_\nu^1$, defined below. 
Under periodic boundary conditions and even symmetry in each spatial coordinate, the solution admits a cosine Fourier expansion in the spatial variables
\begin{equation}\label{eq:cosine-expansion}
    u(t,x) = \sum_{\bk\ge 0} \alpha_{\bk} u_{\bk}(t) \cos(k_1 L_1 x_1)\cdots\cos(k_d L_d x_d),\quad \alpha_{\bk}=\alpha_{k_1,\dots,k_d} \bydef \prod_{j=1}^d(2-\delta_{k_j,0}),
\end{equation}
where $\delta_{i,j}$ is Kronecker's delta.

The factor $\alpha_{\bk}$ accounts for the signed Fourier modes that are grouped together in the cosine expansion. Each positive component of $\bk$ contributes two choices of sign, while a zero component contributes only one. With this normalization, products of functions correspond to the usual convolution of their Fourier coefficients extended to $\Z^d$.
We extend the Fourier coefficients to $\Z^d$ by $u_{\bk}=u_{|\bk|}$, where $|\bk|\bydef(|k_1|,\dots,|k_d|)$ is the componentwise absolute value. This convention reflects even symmetry in each coordinate.
	
\begin{remark}
For simplicity, we use the same notation for the function $u(t,x)$ and its Fourier coefficients $u_{\bk}(t)$. We also use boldface characters to denote multi-indices. In general, $\bk=(k_1,\dots,k_d)\in\Z^d$. The stored cosine coefficients are indexed by $\bk\in\mathbb{N}_0^d$, where $\mathbb{N}_0=\{0,1,2,\dots\}$. Inequalities between multi-indices are understood component-wise. In particular, $\bk<\bn$ means $k_j<n_j$ for all $j=1,\dots,d$. Similarly, the relations $\bk\le\bn$, $\bk>\bn$, and $\bk\ge\bn$ are interpreted componentwise.
\end{remark}

The advantage of this representation is that the Laplacian acts separately on each Fourier mode, multiplying its coefficient by $-(\kk\bL)^2$. The nonlinearity couples the modes through convolution. We therefore obtain a diagonal multiplier for the spatial derivatives and convolution terms for the polynomial $N$.

Plugging the expansion \eqref{eq:cosine-expansion} into the nonlinear term $\Delta^q N(u)$ in the PDE \eqref{eq:general_PDE} yields the cosine Fourier representation
\begin{equation}\label{eq:nonlinearN}
\Delta^q N(u)=\sum_{\bk\ge 0} (-1)^q(\kk\bL)^{2q}\alpha_{\bk}\,\cN_{\bk}(u)\cos(k_1 L_1 x_1)\cdots\cos(k_d L_d x_d),
\end{equation}
where $\kk\bL\bydef \left((k_1L_1)^2+\cdots+(k_dL_d)^2\right)^{1/2}$, $u=(u_{\bk})_{\bk\ge 0}$, and $\cN_{\bk}(u)$ denotes a nonlinear term involving discrete convolutions satisfying $\cN_{\bk}(u)=\cN_{|\bk|}(u)$, $\cN(0)=0$, and $D\cN(0)=0$.

Substituting \eqref{eq:cosine-expansion} into \eqref{eq:general_PDE} then results in the infinite-dimensional system of ordinary differential equations (ODEs)
\begin{equation}\label{eq:general-ODE}
    \dot{u}_{\bk}(t) = \mu_{\bk}u_{\bk}(t) + (-1)^q(\kk\bL)^{2q}\,\cN_{\bk}(u(t)),\quad\bk\ge 0,
\end{equation}
where $\mu_{\bk}\bydef \lambda_0 - \lambda_1(\kk\bL)^2 + \lambda_2(\kk\bL)^4$ is determined by the linear term $\lambda_0 + \lambda_1 \Delta + \lambda_2 \Delta^2$ in~\eqref{eq:general_PDE}.
Therefore, solving the IVP \eqref{eq:IVP_PDE} is equivalent to determining the time-dependent Fourier coefficients $u(t)\bydef(u_{\bk}(t))_{\bk\ge 0}$ satisfying \eqref{eq:general-ODE} with initial condition $u_{\bk}(0)=(u_0)_{\bk}$, where $(u_0)_{\bk}$ denotes the cosine Fourier coefficients of the initial values $u_0(x)$.

The first step of the a posteriori method is to compute a good numerical approximation. We do this by retaining finitely many Fourier modes. We then prove that a solution of the full system lies near this numerical trajectory, including bounds on the modes omitted from the computation.

Throughout this paper, we denote an approximate solution of \eqref{eq:general-ODE} by $\bu$.
To compute $\bu$, we first introduce a finite truncation of the Fourier indices.
For $\bN=(N_1,\dots,N_d)$, define $\FN \bydef \{\bk\ge 0:\bk<\bN\}$. Note that $\FN=F_{N_1}\times\dots \times F_{N_d}$, where $F_{N_j}\bydef\{k_j\ge 0:k_j<N_j\} = \{0,\dots,N_j - 1\}$.
Using this truncation in \eqref{eq:general-ODE}, we consider the following finite-dimensional IVP. When evaluating $\cN_{\bk}(u)$, the finite vector $u=(u_{\bk})_{\bk\in\FN}$ is extended by zero outside $\FN$:
\begin{equation} \label{eq:ODEs_finite}
	\begin{cases}
		\dot{u}_{\bk}(t) = \mu_{\bk}u_{\bk}(t) + (-1)^q(\kk\bL)^{2q}\,\cN_{\bk}(u(t)), \\
		u_{\bk}(0)=(u_0)_{\bk},
	\end{cases}\quad\quad \bk\in\FN.
\end{equation}
We compute a numerical solution of \eqref{eq:ODEs_finite} using a standard ODE solver (e.g., MATLAB ODE solvers), refined as described below, and denote it by $\left(\bu^{(\bN)}_{\bk }(t)\right)_{\bk\in\FN}$.
The approximate solution $\bu$ of \eqref{eq:general-ODE} is then defined by extending this numerical solution as
\begin{equation} \label{eq:a_bar}
\bu_{\bk}(t) \bydef 
\begin{cases}
\bu^{(\bm{N})}_{\bk}(t),& \bk\in\FN 
\\
0, & \bk\not\in\FN.
\end{cases}
\end{equation}

Extending $\bu$ by zero allows us to compare it directly with a solution of the full system. Even though the approximation has a zero tail, the nonlinearity generally produces nonzero coefficients outside $\FN$. These coefficients contribute to the residual that must be bounded in the fixed-point argument.

Each component $\bu^{(\bm{N})}_{\bk}(t)$ is represented by a finite Chebyshev series in time, yielding a Chebyshev-Fourier representation of the approximate solution. Its coefficients are obtained by Newton iteration on the corresponding zero-finding problem, with the ODE solution as initial guess (see Appendix~\ref{sec:Newton-refinement}). General background on Fourier-Chebyshev spectral discretizations is given in \cite{boyd}, while Chebyshev approximation and interpolation are treated in \cite{trefethen}.

\subsection{Function spaces}\label{sec:function-space}
To prove that a true solution lies close to $\bu$, we need a norm that controls both the Fourier coefficients and their evolution in time. We first introduce the spatial sequence space. Its Banach-algebra property will allow us to estimate the polynomial nonlinearity directly in terms of the size of the solution.

To formulate the ODEs \eqref{eq:general-ODE} in sequence space, we define a Banach space of multi-index sequences by
\begin{equation}\label{eq:ell-one-space}
	\ell_{\nu}^1\bydef\left\{a=(a_{\bk })_{\bk\ge 0}:a_{\bk}\in \R,~\|a\|_{\nu}\bydef\sum_{\bk\ge 0}\alpha_{\bk}|a_{\bk }|\nu^{k_1+\dots+k_d}<+\infty\right\},
\end{equation}
where $\nu\ge 1$ denotes the geometric weight. We use the shorthand
$\nu^{\bk}\bydef\nu^{k_1+\dots+k_d}$.

The normalization in \eqref{eq:cosine-expansion} makes $\|a\|_\nu$ equal to the weighted $\ell^1$ norm of the corresponding sequence on $\Z^d$. It also implies absolute and uniform convergence of the Fourier series, whose spatial supremum norm is bounded by $\|a\|_\nu$. The choice $\nu>1$ encodes exponential decay of the Fourier coefficients.
The space $\ell_{\nu}^1$ is a Banach algebra under discrete convolution, that is,
\begin{equation}\label{eq:Banach_algebra}
	\|a\ast b\|_{\nu}\le \|a\|_{\nu}\|b\|_{\nu}
\end{equation}
holds for all $a, b\in \ell_{\nu}^1$, where the discrete convolution of sequences $a$ and $b$ is defined by
\begin{equation}\label{eq:discrete_convolution}
	\left(a\ast b\right)_{\bk}\bydef\sum_{\substack{\bk_1+\bk_2= \bk\\ \bk_1,\bk_2\in \Z^d}}a_{|\bk_1|}b_{|\bk_2|}.
\end{equation}

The convolution estimate controls $N(u)$, but the additional Laplacian in the case $q=1$ introduces a factor growing quadratically with the frequency. To keep track of this derivative loss, we introduce the weaker sequence space
\[
	\ell_{\nu,-2q}^1\bydef\left\{a:\|a\|_{\nu,-2q}\bydef\sum_{\bk\ge 0}|a_{\bk}|\frac{\alpha_{\bk}\nu^{k_1+\dots+k_d}}{\max\{1,(\kk\bL)^{2q}\}}<+\infty\right\}.
\]
This space is also a Banach space, but for $q>0$ it is not a Banach algebra under discrete convolution. In addition, $\ell_{\nu}^1=\ell_{\nu,0}^1$ ($q=0$) holds.

Since $\max\{1,(\kk\bL)^{2q}\}\ge1$, we have $\ell_\nu^1\subset\ell_{\nu,-2q}^1$ and $\|a\|_{\nu,-2q}\le\|a\|_\nu$. The solution and its polynomial nonlinearity are measured in $\ell_\nu^1$, while the differentiated nonlinear term is measured in $\ell_{\nu,-2q}^1$. The smoothing properties of the parabolic equation will allow us to recover the stronger norm after integration in time.

Let $\tau>0$ be the final time of integration and $J=[0,\tau]$.
We define a function space of the time-dependent sequences as $C(J,\ell_{\nu}^1)$, which is the Banach space with the norm
\begin{equation}\label{eq:Xnorm}
	\|u\|\bydef\sup _{t \in J}\|u(t)\|_{\nu}.%
\end{equation}
For the computable bounds, it is more convenient to take the supremum of each Fourier coefficient before summing over the modes. This gives the stronger trajectory space
\begin{equation}
X\bydef\ell_\nu^1(C^0(J))=\left\{u=(u_{\bk})_{\bk\ge0}:u_{\bk}\in C^0(J;\R),\quad\|u\|_X\bydef\sum_{\bk\ge0}\alpha_{\bk}|u_{\bk}|_\infty\nu^{k_1+\dots+k_d}<\infty\right\},
\end{equation}
where $|\cdot|_\infty$ denotes the supremum norm over $t\in J$. The norm $\|\cdot\|_X$ is obtained by first taking the supremum of each Fourier coefficient and then applying the weighted sequence norm. Below, we also write $X=\ell^1(C^0(J))$, with these weights understood. Finite-mode truncations of any $u\in X$ are continuous as $\ell_\nu^1$-valued functions and converge uniformly in that space. Thus $X\subset C(J,\ell_\nu^1)$, and $\|u\|\le\|u\|_X$.

Taking the supremum before summing is useful because our computations enclose each Fourier coefficient over the whole time interval. Different coefficients may attain their largest values at different times. The $X$-norm accounts for all these bounds and, by the inclusion above, also controls the error in $\ell_\nu^1$ at every time.

Finally, for general Banach spaces $\cX$ and $\cY$, the operator norm for a bounded linear operator $B$ from $\cX$ to $\cY$ is defined by
\begin{equation}
	\|B\|_{B(\cX,\cY)}\bydef \sup_{u\in \cX\setminus\{0\}}\frac{\|B u\|_{\cY}}{\|u\|_{\cX}}.
\end{equation}

\subsection{Infinite-dimensional ODEs on sequence space}\label{sec:sequence_operators}
We can now write the Fourier system using the three operators introduced in the introduction. The operator $\cL$ contains the linear part of the PDE, $\cN$ represents the polynomial nonlinearity, and $\cQ$ contains the derivatives acting on that nonlinearity. Keeping these contributions separate will allow us to use the dissipative linear dynamics to control the derivative loss in the nonlinear term.

Let $\cL:D(\cL)\subset \ell_{\nu}^1\to \ell_{\nu}^1$ be a densely defined closed operator%
\footnote{%
Indeed, $D(\cL)$ is dense in $\ell_\nu^1$ because the truncated subspace is dense in $\ell_\nu^1$, i.e., $\forall a\in \ell_\nu^1$, $\forall \varepsilon>0$, $\exists \bN$ s.t.\ its truncation $a^{(\bN)}\in D(\cL)$ satisfies $\|a-a^{(\bN)}\|_\nu < \varepsilon$.
It is also closed because if
$a^n\to a$ and $\cL a^n\to b$ in $\ell_\nu^1$ with $a^n\in D(\cL)$ as $n\to\infty$, then, componentwise, $b_{\bk}=\mu_{\bk}a_{\bk}$ for all $\bk$, and therefore $a\in D(\cL)$ and
$\cL a=b$.%
}
acting on a sequence $a=(a_{\bk})_{\bk\ge0}$ as
\[
    \left(\cL a\right)_{\bk}\bydef\mu_{\bk}a_{\bk} = (\lambda_0 - \lambda_1 (\kk\bL)^2 + \lambda_2 (\kk\bL)^4) a_{\bk},
\]
where the domain of the operator $\cL$ is denoted by $D(\cL)\bydef\left\{a\in\ell_\nu^1:(\mu_{\bk}a_{\bk})_{\bk\ge0}\in\ell_\nu^1\right\}$.
We also define $\cQ:\ell_{\nu}^1\to \ell_{\nu,-2q}^1$ to be a multiplication operator acting on a sequence $a=(a_{\bk})_{\bk\ge0}$ as
\begin{equation}\label{eq:def_Q}
	(\cQ a)_{\bk} \bydef (-1)^q (\kk\bL)^{2q}a_{\bk}.
\end{equation}
For $q=0$, we set $\cQ=I$, including on the zero mode. The factor $(\kk\bL)^0$ is interpreted as $1$ throughout. In either case, $\|\cQ a\|_{\nu,-2q}\le\|a\|_\nu$.

Furthermore, we define $\cN:\ell_{\nu}^1\to \ell_{\nu}^1$ to be a Fr\'echet differentiable nonlinear operator acting on a sequence $a=(a_{\bk})_{\bk\ge0}$ as
\[
\left(\cN(a)\right)_{\bk}\bydef \cN_{\bk}(a),
\]
where $\cN_{\bk}$ is defined in \eqref{eq:nonlinearN}. For the polynomial $N(z)=\sum_{p=2}^{P}c_pz^p$, its Fourier representation is $\cN(a)=\sum_{p=2}^{P}c_pa^{\ast p}$, where the convolution powers are defined recursively by $a^{\ast 1}\bydef a$ and $a^{\ast p}\bydef a^{\ast(p-1)}\ast a$ for $p\ge2$, using the convolution in \eqref{eq:discrete_convolution}. For example, $a^{\ast 2}=a\ast a$ and $a^{\ast 3}=(a\ast a)\ast a$. In particular, $\cN(0)=0$ and $D\cN(0)=0$. The Banach-algebra estimate shows that the non-decreasing function
\[
 g:[0,\infty)\to[0,\infty),\qquad g(r)\bydef\sum_{p=2}^{P}p|c_p|r^{p-1},
\]
satisfies, for every $a\in\ell_\nu^1$,
\begin{equation} \label{eq:N_assumption}
\left\|D\cN(a)\phi\right\|_{\nu}\le g(\| a \|_{\nu})\|\phi\|_{\nu},\quad\forall\phi\in\ell_{\nu}^1.
\end{equation}

The function $g$ bounds the derivative of the nonlinearity in terms of the norm of its argument. This is the bound used later to control the coupling between the finite modes and the tail. With these operators defined, the coefficient system \eqref{eq:general-ODE} takes the form

\begin{equation}\label{eq:ODE-sequence-space}
	\dot{u} = f(u)\bydef \cL u + \cQ \cN(u),
\end{equation}
where $u(t)=(u_{\bk}(t))_{\bk\ge 0}$ is the unknown sequence of time-dependent Fourier coefficients.
Together with $u(0)=u_0$, this gives the abstract IVP \eqref{eq:PDEs} with phase space $\ell_\nu^1$. The weaker space $\ell_{\nu,-2q}^1$ accommodates the derivative loss in the nonlinear term $\cQ\cN(u)$. The unbounded linear term is interpreted through the mild formulation below, rather than as a vector field defined on the whole sequence space.

\subsection{Fixed-point operator}\label{sec:fixed-point-operator}

We reformulate the IVP as a fixed-point problem by linearising around the approximate solution $\bu$ in \eqref{eq:a_bar}.
Rewriting the ODE \eqref{eq:ODE-sequence-space} with initial condition $u(0)=u_0$ in the form
\begin{equation}\label{eq:linearised_IVP}
	\begin{cases}
		\dot{u} - \left(\cL + \cQ D\cN(\bu)\right)u = \cQ\!\left(\cN(u) - D\cN(\bu)\,u\right),\\
		u(0) = u_0,
	\end{cases}
\end{equation}
the left-hand side is governed by the time-dependent linear operator $\cL+\cQ D\cN(\bu(t))$, while the right-hand side contains the nonlinear remainder
\begin{equation}\label{eq:G_def}
	(G(u))(t)\bydef\cN(u(t))-D\cN(\bu(t))u(t).
\end{equation}
Here $G(u(s))$ denotes $(G(u))(s)$. The Banach-algebra property implies that $G:X\to X$ is smooth, with $DG(\bu)=0$.
This vanishing derivative will give a small contraction rate near $\bu$. The error of the approximation is measured separately by $T(\bu)-\bu$.

Let $\{U(t,s)\}_{0\le s\le t\le\tau}$ denote the evolution operator generated by $\cL+\cQ D\cN(\bu(t))$. Its existence follows from the parabolic assumptions and the smooth, finite Fourier-Chebyshev approximation $\bu$. See \cite{MR3639578,MR3683781} for the semigroup framework. For each $\phi\in\ell_\nu^1$, $v(t)=U(t,s)\phi$ is the mild solution of the following IVP, with the differential equation understood classically for $t>s$:
\begin{equation}\label{eq:evolution_operator}
	\dot{v}(t) = \left(\cL + \cQ D\cN(\bu(t))\right)v(t),\quad v(s)=\phi.
\end{equation}
The evolution operator $U(t,s)$ maps the linearized solution at time $s$ to its value at time $t$. Applying the variation of constants formula to \eqref{eq:linearised_IVP} gives a contribution from the initial condition and an integral contribution from the nonlinear remainder. This leads to the fixed-point formulation

\begin{equation}\label{eq:T_single}
	u(t) = (T(u))(t) \bydef U(t,0)\,u_0 + \int_0^t U(t,s)\,\cQ G(u(s))\,ds.
\end{equation}
For $q=1$, parabolic smoothing compensates for the derivatives in $\cQ$, making the integral well defined. The estimates in Section~\ref{sec:multi-stepping} show that both terms belong to $X$. Thus $T:X\to X$, and its fixed points are precisely the solutions in $X$ of the IVP \eqref{eq:PDEs}, understood through the integral formulation \eqref{eq:T_single}.
We therefore seek bounds on $T(\bu)-\bu$ and on $DT$ near $\bu$. Since $DT(\bu)=0$, the contraction rate depends on higher-order nonlinear terms. These give the $Y$- and $Z$-bounds derived in Section~\ref{sec:multi-stepping}.

In principle, one could apply the contraction mapping theorem directly to $T$ defined in~\eqref{eq:T_single} on the time interval $J=[0,\tau]$.
However, when $\tau$ is large, the computable upper bounds on $\|U(t,0)\|$ typically grow substantially with the length of the interval, making the resulting $Y$- and $Z$-bounds too large to verify the contraction argument.
To overcome this difficulty, in Section~\ref{sec:multi-stepping} we decompose $J$ into short time steps $J_1,J_2,\dots,J_M$ and construct a multi-step fixed-point map $\bm{T}=(T_m)_{m=1}^M$ (see \eqref{eq:fixed-point-form}), so that the contraction argument can be applied on the product of the step spaces over a long time interval.
This technique is analogous to domain decomposition in time, as used in computer-assisted integration of differential equations. See, for example, \cite{JBMaxime,JBMaxime2}.
The key challenge is then to propagate rigorous bounds on the evolution operator across multiple steps, which is the subject of Section~\ref{sec: solution_operator}.

\section{Non-autonomous semigroup approach for integrating PDEs}\label{sec:multi-stepping}

In this section, we obtain a rigorous enclosure of a numerical trajectory over several time steps. Section~\ref{sec:trajectory_validation} introduces the recursive fixed-point map \eqref{eq:fixed-point-form} and the Radii Polynomial Theorem~\ref{thm:RadiiPolynomial}, a Newton-Kantorovich type argument that requires a defect bound $Y_m$ and derivative bounds $Z_m^j$. Section~\ref{sec:defect_derivative_bounds} expresses these bounds through linear propagation estimates, which are computed in Section~\ref{sec: solution_operator}. The proofs and the Ohta--Kawasaki tail constants are given in Appendix~\ref{app:linear_validation_details}.

\subsection{A fixed-point formulation over several time steps}\label{sec:trajectory_validation}

We divide $J=[0,\tau]$ into $M$ short \emph{time steps} $J=\bigcup_{m=1}^M J_m$, where $J_m \bydef [\tau_{m-1}, \tau_m]$ ($m=1,\dots,M$), $0 = \tau_0 < \tau_1 < \dots<\tau_M=\tau$, and the \emph{step size} $h_m\bydef \tau_m-\tau_{m-1}$ can be chosen adaptively.
On each time step $J_m$, let $u_{m}=u|_{J_{m}}$ and let $\bu_{m}$ be the corresponding Fourier-Chebyshev approximation. We regard $\bu=(\bu_{1},\dots,\bu_{M})$ as a tuple of functions. The intervals share their endpoints, but the approximations need not agree there. These mismatches contribute to the defect bound.

On $J_m$, the nonlinear remainder is
\begin{equation}\label{eq:Gm_local}
 (G_{m}(u_{m}))(t)\bydef{\cN}(u_m(t))-D\cN(\bu_m(t))u_{m}(t).
\end{equation}
Thus $DG_m(\bu_m)=0$. Each $G_m$ depends only on the trajectory on its own interval.

To obtain a rigorous enclosure on each step, we use the spaces
\begin{equation}
	X_m\bydef \ell^1\left(C^0(J_m)\right)=\left\{u:\|u\|_{X_m}\bydef\left\|\|u\|_{C^0(J_m)}\right\|_\nu<+\infty\right\},
\end{equation}
where the norm is obtained by taking the supremum of each Fourier coefficient and then applying the weighted sequence norm defined in Section~\ref{sec:function-space}. 
Around each approximation, we consider the ball of radius $r_m>0$
\begin{equation}\label{eq:local_validation_ball}
	B_{r_m}(\bu_{m})\bydef\{u_{m}\in X_m:\|u_{m}-\bu_{m}\|_{X_m}\le r_m\}.
\end{equation}
The product space is
\begin{equation}
	\bm{X}\bydef\prod_{m=1}^M \ell^1\left(C^0(J_m)\right)\left(=\prod_{m=1}^M X_m\right)\subset \prod_{m=1}^M C(J_m,\ell_\nu^1),
\end{equation}
The inclusion follows from $X_m\subset C(J_m,\ell_\nu^1)$, as in Section~\ref{sec:function-space}. With the norm $\|u\|_{\bm X}=\max_m\|u_m\|_{X_m}$, this finite product is a Banach space. From now on, $u=(u_1,\dots,u_M)$ denotes an arbitrary element of $\bm X$. Its components need not agree at shared endpoints. We define the fixed-point map recursively by
\begin{equation}
\begin{aligned}
	\bm{T}(u)&\bydef\left(T_m(u)\right)_{m=1}^M,\\
    \left(T_m(u)\right)(t)&\bydef U_m(t,\tau_{m-1})\sigma_{m-1}(u)+\int_{\tau_{m-1}}^{t}U_m(t,s)\cQ G_m(u_{m}(s))ds\quad (t\in J_m).
\end{aligned}\label{eq:fixed-point-form}
\end{equation}
Here $U_m(t,s)$ denotes the evolution operator generated by $\cL+\cQ D\cN(\bu_m(t))$. For $w_m\in X_m$, define the endpoint evaluation $e_m(w_{m})\bydef w_m(\tau_m)$. The endpoint of the output is
\begin{equation}\label{eq:sigma_recursion}
 \sigma_{m}(u)\bydef e_{m}(T_{m}(u)),\qquad \sigma_{0}(u)\bydef u_{0}.
\end{equation}
Equivalently, $\sigma_m(u)$ is obtained by setting $t=\tau_m$ in \eqref{eq:fixed-point-form}. It depends on $u_1,\dots,u_m$, whereas $e_m$ is simply evaluation of a single function.

\begin{remark}[{\bf Continuity of the recursive map output}]\label{rem:matching_traces}
{The components of $u$ and those of $\bu$ need not have matching endpoint values. The recursive definition gives
\begin{equation}\label{eq:output_matching}
 (T_m(u))(\tau_{m-1})=(T_{m-1}(u))(\tau_{m-1}),\qquad 2\le m\le M,
\end{equation}
and $(T_1(u))(0)=u_0$. Thus the output components join continuously, and a fixed point gives one continuous solution with the prescribed initial condition.}
\end{remark}

For $0\le j<i\le M$, define the endpoint propagator product by
\begin{equation}\label{eq:true-propagator}
 \Phi_{i,j}\bydef U_i(\tau_i,\tau_{i-1})U_{i-1}(\tau_{i-1},\tau_{i-2})\cdots U_{j+1}(\tau_{j+1},\tau_j),
 \quad\Phi_{j,j}\bydef\mathrm{Id}.
\end{equation}

Expanding the recurrence gives the contribution from the initial condition and from every source interval{, using these endpoint products}:
\begin{equation}\label{eq:Tm_unrolled}
\begin{aligned}
 (T_m(u))(t)
 &=U_m(t,\tau_{m-1})\left(\Phi_{m-1,0}u_0
 +\sum_{j=1}^{m-1}\Phi_{m-1,j}\int_{\tau_{j-1}}^{\tau_j}
 U_j(\tau_j,s)\cQ G_j(u_{j}(s))\,ds\right)\\
 &\quad+\int_{\tau_{m-1}}^t U_m(t,s)\cQ G_m(u_{m}(s))\,ds{,}
\end{aligned}
\end{equation}

{The product $\Phi_{i,j}$ propagates data from $\tau_j$ to $\tau_i$. In \eqref{eq:Tm_unrolled}, each integral contribution is propagated to the start of the current step and then to the observation time $t$.

For $s\in J_\ell$ and $t\in J_k$ with $\ell<k$, define
\begin{equation}\label{eq:piecewise_evolution}
 U(t,s)\bydef U_k(t,\tau_{k-1})\Phi_{k-1,\ell}U_\ell(\tau_\ell,s).
\end{equation}
For $s,t\in J_k$ with $s\le t$, use $U(t,s)=U_k(t,s)$. These definitions agree at shared endpoints and satisfy the composition law $U(t,r)U(r,s)=U(t,s)$ for $s\le r\le t$. The evolution uses the linearization along $\bu_k$ on each step and continues from the endpoint reached on the preceding step. }

We seek a fixed point of \eqref{eq:fixed-point-form} in the product of the balls $B_{r_m}(\bu_m)$ defined in \eqref{eq:local_validation_ball}. The Radii Polynomial Theorem gives conditions for this product to be invariant and for the map to be a contraction. We write $D_iT_m(u):X_i\to X_m$ for the derivative with respect to the $i$\,th component. By the recursive construction, $D_iT_m=0$ for $i>m$.

\begin{theorem}[{\bf Radii Polynomial Theorem}]\label{thm:RadiiPolynomial}
Assume that $\bm T:\bm X\to\bm X$ is continuously Fr\'echet differentiable and that $T_m$ depends only on $u_1,\dots,u_m$.

Let $\bu\in{\bm X}$. Assume there are constants $Y_m\ge{0}$ and non-decreasing functions $Z_m^i:[0,\infty)\to[0,\infty)$, for $1\le i\le m\le M$, such that
\begin{align}\label{eq:validation_hypotheses}
 \|T_m(\bu)-\bu_{m}\|_{X_m}&\le Y_m,\\
 \|D_{i}T_m(u)\|_{B(X_i,X_m)}&\le Z_m^i(r_i)
 \quad{\text{whenever }}\|u_{i}-\bu_{i}\|_{X_{i}}\le r_i.
\end{align}
For every radius $r_i>0$, the derivative bound is assumed to hold independently of the other components of $u$. If positive vectors $r=(r_m)$ and $\zeta=(\zeta_m)$ satisfy
\begin{align}\label{eq:validation_inequalities}
 Y_m+\sum_{i=1}^m\int_0^{r_i}Z_m^i(s)\,ds&\le r_m,\\
 \sum_{i=1}^m Z_m^i(r_i)\zeta_i&<\zeta_m
 \qquad (1\le m\le M),
\end{align}
then $\bm T$ has a unique fixed point in $\prod_{m=1}^M B_{r_m}(\bu_{m})$.
\end{theorem}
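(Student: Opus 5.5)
The plan is to prove the theorem by Banach's fixed-point theorem on the closed set $B\bydef\prod_{m=1}^M B_{r_m}(\bu_m)\subset\bm X$. Two things are needed: $\bm T(B)\subset B$, and $\bm T$ is a contraction on $B$. For the contraction we replace the max norm by the weighted norm
\[
 \|u\|_\zeta\bydef\max_{1\le m\le M}\frac{\|u_m\|_{X_m}}{\zeta_m}.
\]
Since $M$ is finite and $\zeta_m>0$, this norm is equivalent to $\|\cdot\|_{\bm X}$. Hence $B$ is complete in it, and a fixed point that is unique for one norm is unique for the other.

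\textbf{Invariance.} Fix $u\in B$ and $m$. The key step is to telescope in one component at a time rather than along the straight segment. Define $w^{(0)}\bydef\bu$ and let $w^{(i)}$ agree with $u$ in components $1,\dots,i$ and with $\bu$ in the rest. Because $T_m$ depends only on components $1,\dots,m$, we have $T_m(u)=T_m(w^{(m)})$, so
\[
 T_m(u)-\bu_m=\bigl(T_m(\bu)-\bu_m\bigr)+\sum_{i=1}^m\bigl(T_m(w^{(i)})-T_m(w^{(i-1)})\bigr).
\]
Consecutive $w^{(i-1)}$ and $w^{(i)}$ differ only in component $i$. Write $\delta_i\bydef\|u_i-\bu_i\|_{X_i}\le r_i$ and use the path $\theta\mapsto w^{(i-1)}+\theta\,\iota_i(u_i-\bu_i)$ for $\theta\in[0,1]$, where $\iota_i$ is the inclusion of the $i$th factor. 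Along this path the $i$th component is at distance $\theta\delta_i$ from $\bu_i$. The hypothesis on $Z_m^i$ holds regardless of the other components, which are only ever $u_j$ or $\bu_j$. The mean value inequality in integral form therefore gives
\[
 \|T_m(w^{(i)})-T_m(w^{(i-1)})\|_{X_m}\le\int_0^1 Z_m^i(\theta\delta_i)\,\delta_i\,d\theta=\int_0^{\delta_i}Z_m^i(s)\,ds\le\int_0^{r_i}Z_m^i(s)\,ds.
\]
Summing over $i$ and applying the first inequality in \eqref{eq:validation_inequalities} yields $\|T_m(u)-\bu_m\|_{X_m}\le r_m$, so $\bm T(B)\subset B$.

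\textbf{Contraction.} Take $u,v\in B$. The segment between them stays in $B$ because $B$ is convex, and along it every component satisfies $\|\cdot-\bu_i\|\le r_i$. The mean value inequality then gives
\[
 \|T_m(u)-T_m(v)\|_{X_m}\le\sum_{i=1}^m Z_m^i(r_i)\,\|u_i-v_i\|_{X_i}\le\Bigl(\sum_{i=1}^m Z_m^i(r_i)\zeta_i\Bigr)\|u-v\|_\zeta.
\]
Dividing by $\zeta_m$ and using the second inequality in \eqref{eq:validation_inequalities}, we get
\[
 \|\bm T(u)-\bm T(v)\|_\zeta\le\kappa\|u-v\|_\zeta,\qquad \kappa\bydef\max_m\zeta_m^{-1}\sum_{i=1}^m Z_m^i(r_i)\zeta_i<1,
\]
where the maximum is over finitely many indices, so $\kappa<1$ strictly. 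Banach's theorem then gives existence and uniqueness of the fixed point in $B$.

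\textbf{Main obstacle.} The delicate step is the invariance estimate. A naive straight segment from $\bu$ to $u$ would only produce $\int_0^1 Z_m^i(\theta r_i)\,r_i\,d\theta$ if all components moved together. That does give the same integral, but only because the derivative bound is assumed uniform in the other components. The one-component-at-a-time telescoping makes this dependence explicit, which is why the statement insists on that uniformity. Continuity of $DT_m$, guaranteed by the $C^1$ hypothesis, justifies writing the differences as integrals of derivatives.
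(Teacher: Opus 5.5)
Your proposal is correct and follows the paper's proof essentially step for step. Invariance comes from the integral form of the mean value inequality, which produces the integrated bounds $\int_0^{a_i}Z_m^i(s)\,ds$. Contraction then holds in the weighted norm $\max_i\|w_i\|_{X_i}/\zeta_i$ with $\kappa<1$, and Banach's theorem finishes the argument.

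The only difference is cosmetic. The paper uses the straight segment $\bu+\theta(u-\bu)$ in the invariance step instead of your one-component-at-a-time telescoping. Both versions work for the same reason: the derivative bound is uniform in the other components. The paper also notes that indices with $a_i=0$ and the single point $\theta=0$ contribute nothing to the integral; you skipped this detail, but it is harmless.
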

For the recursive map \eqref{eq:fixed-point-form}, the polynomial nonlinearity gives continuous Fr\'echet differentiability once the integral operators are bounded. We establish these bounds in Sections~\ref{sec:defect_derivative_bounds} and~\ref{sec: solution_operator}. The recursive dependence gives the triangular structure required by the theorem.
Separate radii allow different error sizes on different time steps. The two conditions in \eqref{eq:validation_inequalities} ensure invariance of the product of balls and contraction in a weighted product norm, respectively. The weights $\zeta_m$ need not equal the radii $r_m$. A closely related theorem and a very similar proof appear in \cite[Theorem~2.12]{JBMaxime}. Our proof is given in Appendix~\ref{app:validation_proofs}.

For the polynomial nonlinearity considered here, the derivative bounds in \eqref{eq:Zi_polynomial} and \eqref{eq:Zmi_bound} are polynomials in the radii. Hence the first condition is $p_m(r)\le0$, where
\begin{equation}\label{eq:radii_polynomials}
 p_m(r)\bydef Y_m+\sum_{j=1}^m\int_0^{r_j}Z_m^j(s)\,ds-r_m.
\end{equation}
These are the radii polynomials that give the theorem its name, although its statement also allows general non-decreasing derivative bounds.

\begin{remark}[{\bf Choosing the contraction weights}]
Define the nonnegative lower triangular matrix $\mathsf Z(r)$ by $[\mathsf Z(r)]_{m,i}=Z_{m}^{i}(r_{i})$ for $i\le m$ and zero otherwise. A positive vector $\zeta$ with $\mathsf Z(r)\zeta<\zeta$ exists exactly when $\rho(\mathsf Z(r))<1$, where $\rho$ denotes the spectral radius. For example, if this condition holds, $\zeta=(I-\mathsf Z(r))^{-1}{\mathbf1}$, where $\mathbf 1\bydef(1,\dots,1)^T\in\R^M$ denotes the vector of ones, is positive and satisfies the strict inequality. Since the matrix is lower triangular, its spectral radius is $\max_{m} Z_{m}^{m}(r_{m})$. Large off-diagonal entries still affect the invariant-ball inequalities and the size of the weights.
\end{remark}

To apply the theorem, we compute $Y_m$ and $Z_m^j$ using the defect and derivative estimates in Section~\ref{sec:defect_derivative_bounds}. The required linear propagation bounds are obtained in Section~\ref{sec: solution_operator}. It then remains to choose positive radii and weights satisfying \eqref{eq:validation_inequalities}.

\subsection{Defect and derivative bounds}\label{sec:defect_derivative_bounds}
In this subsection, we derive the bounds $Y_m$ and $Z_m^j$ required by Theorem~\ref{thm:RadiiPolynomial}, assuming that the linear operator bounds \eqref{eq:WVbounds} are available.

To state the defect and derivative bounds, we first introduce two linear operators. For $0\le j\le m-1$, let $W_{m,j}:\ell_\nu^1\to X_m$ be defined by
\begin{equation}\label{eq:W_operator}
 W_{m,j}{\phi}\bydef U_{m}(\cdot,\tau_{m-1})\Phi_{m-1,j}{\phi}.
\end{equation}
We next describe the contribution of an inhomogeneous term on $J_{j+1}$. Let $f\in X_{j+1}$ be a given function. We first define $V_{0,m,j}:X_{j+1}\to X_m$, which integrates $f$ and propagates its contribution to $J_m$, by
\begin{equation}\label{eq:V_operator}
 (V_{0,m,j}{f})(t)\bydef
 \begin{cases}
 U_m(t,\tau_{m-1})\Phi_{m-1,j+1}
 \displaystyle\int_{\tau_j}^{\tau_{j+1}}U_{j+1}(\tau_{j+1},s) f(s)\,ds,
 &j\le m-2,\\
 \displaystyle\int_{\tau_{m-1}}^{t} U_{m}(t,s) f(s)\,{d}{s},
 &j=m-1.
 \end{cases}
\end{equation}
Here $t\in J_m$. We also define $V_{q,m,j}:X_{j+1}\to X_m$ by replacing $f(s)$ with $\cQ f(s)$ in \eqref{eq:V_operator}. The subscript $q$ refers to the fixed parameter $q\in\{0,1\}$ in the PDE \eqref{eq:general_PDE}. Since $\cQ$ represents $\Delta^q$ in Fourier coordinates (see \eqref{eq:def_Q}), $V_{q,m,j}$ integrates $f$ when $q=0$ and the Laplacian of $f$ when $q=1$. In particular, $V_{q,m,j}=V_{0,m,j}$ when $q=0$. Both operators use the same evolution operators $U_m$.
When $q=1$, $\cQ f$ need not belong to $X_{j+1}$. Its integral is understood coefficientwise, or in the weaker sequence space from Section~\ref{sec:function-space}. The smoothing estimates in Section~\ref{sec:one-step} show that the output belongs to $X_m$. We write $V_{\star,m,j}$, with $\star\in\{0,q\}$, when discussing the two cases together.
Thus $W_{m,j}$ propagates an initial value at $\tau_j$, whereas $V_{\star,m,j}$ integrates an inhomogeneous term on $J_{j+1}$ and propagates its contribution to $J_m$. We distinguish these operators from their scalar upper bounds, denoted by $\bm{W^{(J_m,\tau_j)}}$ and $\bm{V_{\star}^{(J_m,\tau_j)}}$.

We seek computable constants satisfying
\begin{equation}\label{eq:WVbounds}
 \|{W}_{m,j}\|_{B(\ell_{\nu}^{1},X_{m})}\le{\bm{W^{(J_m,\tau_j)}}},\qquad
 \|{V}_{\star,m,j}\|_{B(X_{j+1},X_m)}\le{\bm{V_{\star}^{(J_m,\tau_j)}}}.
\end{equation}
The following estimates use these bounds with $\star=0$ for the defect and with $\star=q$ for the derivative. Section~\ref{sec: solution_operator} explains how to compute them.

\subsubsection{\texorpdfstring{The bound \boldmath $Y_m$\boldmath}{The bound Ym}}
There are two sources of defect. The differential residual measures how well $\bu_{j}$ satisfies the PDE inside $J_{j}$. The endpoint mismatch measures how well it matches the preceding approximation, or the prescribed initial condition on the first step. Define
\begin{equation}\label{eq:local_defects}
 \begin{aligned}
 y_1^j(s)&\bydef\bu_j'(s)-\cL\bu_j(s)-\cQ\cN(\bu_j(s)),\\
 y_{2}^{1}&\bydef{\bu}_{1}(0)-u_{0},\\
 y_2^j&\bydef\bu_j(\tau_{j-1})-\bu_{j-1}(\tau_{j-1})
 \quad (2\le j\le M).
 \end{aligned}
\end{equation}
The residual $y_1^j$ already includes all terms of the PDE. Its integral contribution is therefore described by $V_{0,m,j-1}$, without an additional factor $\cQ$.
Let $d_{m}\bydef{\bu}_{m}-T_{m}(\bu)$. In the recurrence below, the preceding-step contribution $e_{m-1}(d_{m-1})$ is taken to be zero when $m=1$. Applying the variation of constants formula to the residual gives \eqref{eq:defect_recursion}.
This is the residual counterpart of the fixed-point formula \eqref{eq:fixed-point-form}. Indeed, the definition of $y_1^m$ shows that $\bu_m$ satisfies the same linearized equation, with inhomogeneous term $\cQ G_m(\bu_m)+y_1^m$. Its initial value differs from the preceding output by $y_2^m+e_{m-1}(d_{m-1})$.

\begin{equation}\label{eq:defect_recursion}
 d_{m}(t)=U_{m}(t,\tau_{m-1})\bigl(y_{2}^{m}+e_{m-1}(d_{m-1})\bigr)
 +\int_{\tau_{m-1}}^{t} U_{m}(t,s)y_{1}^{m}(s)\,{d}{s}.
\end{equation}
Iterating \eqref{eq:defect_recursion}, with the propagation factors in \eqref{eq:Tm_unrolled}, expresses the endpoint and residual contributions through the operators in \eqref{eq:W_operator} and \eqref{eq:V_operator}:
\begin{equation}\label{eq:defect_unrolled}
 d_{m}=\sum_{j=0}^{m-1}\bigl(W_{m,j}{y}_{2}^{j+1}+V_{0,m,j}{y}_{1}^{j+1}\bigr).
\end{equation}
Thus, if $\|y_1^j\|_{X_j}\le Y_1^j$ and $\|y_2^j\|_\nu\le Y_2^j$, we may take
\begin{equation}\label{eq:Ym_bound}
 Y_m\bydef\sum_{j=0}^{m-1}
 \left(\bm{W^{(J_m,\tau_j)}}Y_2^{j+1}
 +\bm{V_0^{(J_m,\tau_j)}}Y_1^{j+1}\right).
\end{equation}
Even when $\bu_{j}$ has finite Fourier support, its nonlinear residual can have modes outside the numerical truncation. These modes must be included in $Y_1^{j}$.

\subsubsection{\texorpdfstring{The bound \boldmath $Z_m^i$\boldmath}{The bound Zmi}}
We now bound the effect of changing one trajectory component while keeping the initial condition fixed. Since $D{G}_{i}(\bu_{i})=0$, this effect becomes small near the approximation. We first bound the local nonlinear derivative by $Z_i(r_i)$. Propagation to the observation step will then give $Z_m^i(r_i)$. Suppose
\begin{equation}\label{eq:Z2-bounds}
 \|DG_{i}(u_{i})\|_{B(X_{i})}\le Z_{i}(r_{i})
 \quad{\text{for }}u_{i}\in B_{r_{i}}(\bu_{i}).
\end{equation}
For the polynomial nonlinearity of Section~\ref{sec:functional_analytic_setup}, a computable choice is
\begin{equation}\label{eq:Zi_polynomial}
 Z_{i}(r)\bydef\sum_{p=2}^{P} p|c_{p}|
 \left((\|{\bu}_{i}\|_{X_{i}}+r)^{p-1}-\|{\bu}_{i}\|_{X_{i}}^{p-1}\right).
\end{equation}
Write $u_i=\bu_i+w_i$. Indeed, expanding $D{\cN}(\bu_{i}+w_{i})-D{\cN}(\bu_{i})$ and applying the Banach-algebra inequality bounds every term by its corresponding power of $\|{\bu}_{i}\|_{X_{i}}$ and $\|{w}_{i}\|_{X_{i}}$. In particular, $Z_{i}(0)=0$. For $N(z)=-z^{3}$, this gives $Z_{i}(r)=6\|{\bu}_{i}\|_{X_{i}}{r}+3{r}^2$.

For $h_{i}\in X_{i}$, differentiating the unrolled fixed-point formula gives
\begin{equation}\label{eq:partial_Tm}
 D_{i}T_m(u)h_{i}=V_{q,m,i-1}\bigl(DG_{i}(u_{i})h_{i}\bigr)
 \qquad (1\le i\le m).
\end{equation}
Here $V_{q,m,i-1}$ includes the factor $\cQ$ acting on the nonlinear derivative $DG_i(u_i)h_i$.
When $i<m$, the inhomogeneous term is integrated over the whole source step and then propagated forward. When $i=m$, it is integrated only up to the observation time $t$. This distinction is already included in \eqref{eq:V_operator}. Consequently,
\begin{equation}\label{eq:Zmi_bound}
 \|D_{i}T_m(u)\|_{B(X_i,X_m)}
 \le{\bm{V_q^{(J_m,\tau_{i-1})}}}Z_i(r_i)
 \bydef Z_m^i(r_i).
\end{equation}
The superscript $i$ identifies the source step and the subscript $m$ identifies the observation step. These bounds complete the contraction test once the required operator bounds have been computed.

\subsection{Linear propagation bounds}\label{sec: solution_operator}

In this subsection, we derive computable bounds satisfying \eqref{eq:WVbounds}. Let $0\le j<m\le M$ and $\star\in\{0,q\}$. For $\phi\in\ell_\nu^1$ and $f\in X_{j+1}$, we consider the mild solution $v$ on $[\tau_j,\tau_m]$ of
\begin{equation}\label{eq:model_linear_problem}
 v'(t)=\bigl(\cL+\cQ D\cN(\bu_k(t))\bigr)v(t)+\ichi_{J_{j+1}}(t)\,\cQ_\star f(t)\quad(t\in J_k,\ j<k\le m),\qquad v(\tau_j)=\phi,
\end{equation}
where $\cQ_0\bydef I$ and $\cQ_q\bydef\cQ$. Similarly, we set $Q_{\bbm,0}\bydef I$ and $Q_{\bbm,q}\bydef Q_{\bbm}$, where $Q_{\bbm}$ is defined below \eqref{eq:psi_def}. From the variation of constants formula, we have $v|_{J_m}=W_{m,j}\phi+V_{\star,m,j}f$. Note that the inhomogeneous term acts only on $J_{j+1}$, and that $v$ solves the homogeneous equation on $J_{j+2},\dots,J_m$ (see Figure~\ref{fig:propagation_timeline}).

If these bounds were obtained by taking operator norms on each time step, the cancellations between Fourier modes would be lost, and the resulting bounds would grow geometrically with the number of time steps (see \cite[Section~6]{MR4852997}). To avoid this, we proceed as follows. In Section~\ref{sec:one-step}, we bound each Fourier mode of a finite set separately and the infinite tail as a whole. In Section~\ref{sec:multistep}, we multiply the signed finite-dimensional propagators over several time steps before taking absolute values, and we bound the coupling between the finite part and the tail as an additive correction. These bounds are combined in Section~\ref{sec:propagation_assembly}.

\subsubsection{One step}\label{sec:one-step}

We begin with the case $j=m-1$, in which $v$ solves
\begin{equation}\label{eq:linear_one_step}
 v'=\bigl(\cL+\cQ D\cN(\bu_m)\bigr)v+\cQ_\star f\quad\text{on }J_m,\qquad v(\tau_{m-1})=\phi.
\end{equation}

To bound $v$, we let $\bbm=(K_1,\dots,K_d)$ be a Fourier truncation size such that, with $\FK\bydef\{\bk\ge0:\bk<\bbm\}$,
\begin{equation}\label{eq:mu_bbm_def}
 \mu_{\bbm}\bydef\sup_{\bk\notin\FK}\mu_{\bk}<0.
\end{equation}
The same $\bbm$ is used on each time step. Any sequence $a$ is uniquely decomposed as $a=a^{(\bbm)}+a^{(\infty)}$, where $a^{(\bbm)}$ contains the modes $\bk\in\FK$ and $a^{(\infty)}$ the remaining ones. We denote $\m\bydef K_1\cdots K_d$ and $D_\nu\bydef\operatorname{diag}(\alpha_{\bk}\nu^{\bk})_{\bk\in\FK}$. In order to bound each mode in $\FK$ separately, we introduce the nonnegative vectors
\begin{equation}\label{eq:profile_vector}
 z(\phi)\bydef\begin{pmatrix}D_\nu|\phi^{(\bbm)}|\\\|\phi^{(\infty)}\|_\nu\end{pmatrix},
 \qquad
 z(w)\bydef\begin{pmatrix}D_\nu|w^{(\bbm)}|_\infty\\\|w^{(\infty)}\|_{X_k}\end{pmatrix}\in\R_+^{\m+1}
 \qquad(\phi\in\ell_\nu^1,\ w\in X_k),
\end{equation}
where $|\cdot|$ and the supremum $|\cdot|_\infty$ over $J_k$ are taken componentwise. Note that $z$ is subadditive and that
\begin{equation}\label{eq:profile_properties}
 \|z(\phi)\|_1=\|\phi\|_\nu,\qquad \|z(w)\|_1=\|w\|_{X_k}.
\end{equation}
Throughout this section, inequalities between vectors and matrices are understood componentwise.

We first bound the finite part $v^{(\bbm)}$. Let $A_{\bbm}(t)\in\R^{\m\times\m}$ denote the matrix representation of $\cL+\cQ D\cN(\bu_m(t))$ restricted to $\FK$, and let $C_m^{(\bbm)}(t,s)$ be its fundamental solution matrix. From the variation of constants formula, we have
\begin{equation}\label{eq:vm_coupled}
 v^{(\bbm)}(t)=C_m^{(\bbm)}(t,\tau_{m-1})\phi^{(\bbm)}+\int_{\tau_{m-1}}^tC_m^{(\bbm)}(t,s)\Bigl(\bigl(\cQ D\cN(\bu_m(s))v^{(\infty)}(s)\bigr)^{(\bbm)}+(\cQ_\star f)^{(\bbm)}(s)\Bigr)ds.
\end{equation}
The first term is bounded by using a rigorous enclosure of the fundamental solution matrix by a Chebyshev series expansion in time \cite[Section~3.1.1]{MR4852997}, which yields
\begin{equation}\label{eq:Phi_sup_m}
 \Phi_{\sup,m}\bydef D_\nu\,|C_m^{(\bbm)}(\cdot,\tau_{m-1})|_\infty\,D_\nu^{-1}\in\R_+^{\m\times\m}.
\end{equation}
Estimating the integral term in the same way would require rigorous control of $C_m^{(\bbm)}(t,s)$ with respect to $s$, that is, a backward integration of the truncated system, which is generally numerically unstable. Following \cite{JBMaxime}, we instead compare $A_{\bbm}(t)$ with a constant matrix. Let $(A_{\bbm})_0$ be the matrix of zeroth Chebyshev coefficients of $A_{\bbm}$ on $J_m$, and let $\bA\bydef V\Lambda V^{-1}$, where $(A_{\bbm})_0\approx V\Lambda V^{-1}$ is a numerical eigenvalue decomposition with $\Lambda=\operatorname{diag}(\theta_\ell)$. The solution of $c'=A_{\bbm}(t)c+b$, $c(\tau_{m-1})=0$, is given by $c(t)=\int_{\tau_{m-1}}^te^{\bA(t-s)}\bigl((A_{\bbm}(s)-\bA)c(s)+b(s)\bigr)ds$, and it follows that
\begin{equation}\label{eq:c1-estimate}
 |c|_\infty\le\tV^{(\bbm)}_m\triangle_{\sup}|c|_\infty+\widehat V^{(\bbm)}_m|b|_\infty,
\end{equation}
Here, we verify that $V$ is invertible, $\triangle_{\sup}\in\R_+^{\m\times\m}$ satisfies $|V^{-1}(A_{\bbm}(t)-\bA)|\le\triangle_{\sup}$ for $t\in J_m$, and, with $\Theta_m\bydef\operatorname{diag}\bigl(\int_0^{h_m}e^{\mathrm{Re}\,\theta_\ell r}dr\bigr)_\ell$,
\begin{equation}\label{eq:bV-esitmates}
 \tV^{(\bbm)}_m\bydef|V|\,\Theta_m,\qquad \widehat V^{(\bbm)}_m\bydef|V|\,\Theta_m\,|V^{-1}|.
\end{equation}
Define
\begin{equation}\label{eq:Neumann-correction}
 B_{\mathrm{neu},m}\bydef I-\tV^{(\bbm)}_m\triangle_{\sup}
\end{equation}
and, if $B_{\mathrm{neu},m}$ is invertible,
\begin{equation}\label{eq:Vk0-estimate}
 \mathcal V_{\bbm,m}\bydef B_{\mathrm{neu},m}^{-1}\widehat V^{(\bbm)}_m\in\R_+^{\m\times\m}.
\end{equation}
\begin{lemma}[{\bf Finite-mode integral bound}]\label{lem:finite_integral}
Assume that $V$ is invertible, that $|V^{-1}(A_{\bbm}(t)-\bA)|\le\triangle_{\sup}$ for $t\in J_m$, and that $B_{\mathrm{neu},m}$ is invertible with $B_{\mathrm{neu},m}^{-1}\ge0$. Then the solution of $c'=A_{\bbm}(t)c+b$ on $J_m$ with $c(\tau_{m-1})=0$ satisfies $|c|_\infty\le\mathcal V_{\bbm,m}|b|_\infty$.
\end{lemma}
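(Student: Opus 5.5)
The plan is to start from the integral representation stated just before the lemma. Since $\bA$ is constant and $c(\tau_{m-1})=0$, variation of constants applied to $c'=\bA c+\bigl((A_{\bbm}(t)-\bA)c+b\bigr)$ gives
\[
 c(t)=\int_{\tau_{m-1}}^t e^{\bA(t-s)}\bigl((A_{\bbm}(s)-\bA)c(s)+b(s)\bigr)\,ds,\qquad t\in J_m.
\]
Because $c$ comes from a linear ODE with continuous (Chebyshev) coefficients on the compact interval $J_m$, it is continuous, so $|c|_\infty$ is finite. I would then insert $e^{\bA(t-s)}=Ve^{\Lambda(t-s)}V^{-1}$. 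For the perturbation term I would keep the product $V^{-1}(A_{\bbm}(s)-\bA)$ together, and for the forcing term I would split it as $V^{-1}b(s)$.

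Next I take absolute values componentwise. This uses $|e^{\theta_\ell r}|=e^{\mathrm{Re}\,\theta_\ell r}$ and the inequality $|Mx|\le|M|\,|x|$ for matrices and vectors. I also substitute $r=t-s\in[0,t-\tau_{m-1}]\subset[0,h_m]$ and enlarge the range of integration to $[0,h_m]$, which is valid because the integrands are nonnegative. Together with the hypothesis $|V^{-1}(A_{\bbm}(s)-\bA)|\le\triangle_{\sup}$ and the bounds $|c(s)|\le|c|_\infty$, $|b(s)|\le|b|_\infty$, this gives, for every $t\in J_m$,
\[
 |c(t)|\le|V|\,\Theta_m\,\triangle_{\sup}|c|_\infty+|V|\,\Theta_m\,|V^{-1}|\,|b|_\infty .
\]
The right-hand side does not depend on $t$, so taking the componentwise supremum yields \eqref{eq:c1-estimate}:
\[
 |c|_\infty\le\tV^{(\bbm)}_m\triangle_{\sup}|c|_\infty+\widehat V^{(\bbm)}_m|b|_\infty .
\]

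To finish, I rearrange this to $B_{\mathrm{neu},m}|c|_\infty\le\widehat V^{(\bbm)}_m|b|_\infty$. Multiplying a componentwise inequality $x\le y$ by a matrix $P\ge0$ preserves it, since $P(y-x)\ge0$. Applying this with $P=B_{\mathrm{neu},m}^{-1}$ gives $|c|_\infty\le B_{\mathrm{neu},m}^{-1}\widehat V^{(\bbm)}_m|b|_\infty=\mathcal V_{\bbm,m}|b|_\infty$.

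The only delicate point is this last step. Inverting the implicit inequality needs the nonnegativity hypothesis $B_{\mathrm{neu},m}^{-1}\ge0$; invertibility of $B_{\mathrm{neu},m}$ alone would not suffice. This is exactly why that hypothesis appears in the lemma. It holds, for instance, when $\rho(\tV^{(\bbm)}_m\triangle_{\sup})<1$, because then $B_{\mathrm{neu},m}^{-1}$ is given by a Neumann series with nonnegative terms. Everything else is a routine estimate, with care taken over complex eigenvalues: $V$ and $\Lambda$ may be complex, but the modulus bounds cover that case.
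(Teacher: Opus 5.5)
Your proposal is correct and follows the same route as the paper's proof. Both substitute $e^{\bA r}=Ve^{\Lambda r}V^{-1}$ into the integral representation, take componentwise absolute values, bound the exponential integrals by $\Theta_m$ to obtain \eqref{eq:c1-estimate}, and then multiply $B_{\mathrm{neu},m}|c|_\infty\le\widehat V^{(\bbm)}_m|b|_\infty$ by the nonnegative matrix $B_{\mathrm{neu},m}^{-1}$. Your additional checks, namely the finiteness of $|c|_\infty$ and the validity for complex $V$ and $\Lambda$, are details the paper leaves implicit.
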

\begin{proof}
Write $e^{\bA r}=Ve^{\Lambda r}V^{-1}$ in the integral representation of $c$ above. Taking absolute values and using $\int_{\tau_{m-1}}^t|e^{\theta_\ell(t-s)}|\,ds\le\int_0^{h_m}e^{\mathrm{Re}\,\theta_\ell r}\,dr$ gives \eqref{eq:c1-estimate}, that is, $B_{\mathrm{neu},m}|c|_\infty\le\widehat V^{(\bbm)}_m|b|_\infty$. Multiplying by $B_{\mathrm{neu},m}^{-1}\ge0$ preserves this inequality and proves the claim.
\end{proof}
Note that only the forward fundamental solution matrix and explicit exponentials are used. The difference $A_{\bbm}-\bA$ appears only in $B_{\mathrm{neu},m}$ (compare with the condition $Z<1$ in \cite{JBMaxime2}).

Next, we bound the coupling term in \eqref{eq:vm_coupled}. For each $\bk\in\FK$, the contribution of the tail to the $\bk$\,th mode is bounded by
\begin{equation}\label{eq:psi_def}
 \bigl|\bigl(\cQ D\cN(\bu_m(t))a^{(\infty)}\bigr)_{\bk}\bigr|\le\qk[\Psi_m]_{\bk}\|a^{(\infty)}\|_\nu,
 \qquad
 [\Psi_m]_{\bk}\bydef\sup_{\bj\notin\FK}\frac{|D\cN(\bu_m)_{\bk,\bj}|_\infty}{\alpha_{\bj}\nu^{\bj}}.
\end{equation}
Note that the supremum is taken over finitely many modes, since $D\cN(\bu_m)$ has finite bandwidth. Setting $Q_{\bbm}\bydef\operatorname{diag}(\qk)_{\bk\in\FK}$, $\mathcal V_{\bbm,m}^{(w)}\bydef D_\nu\mathcal V_{\bbm,m}D_\nu^{-1}$, and
\begin{equation}\label{eq:cin_def}
 c_{\mathrm{in},m}\bydef D_\nu\,\mathcal V_{\bbm,m}Q_{\bbm}\Psi_m\in\R_+^{\m},
\end{equation}
we obtain from \eqref{eq:vm_coupled}
\begin{equation}\label{eq:vbbm_abs_bound}
 D_\nu|v^{(\bbm)}|_\infty\le\Phi_{\sup,m}D_\nu|\phi^{(\bbm)}|+c_{\mathrm{in},m}\|v^{(\infty)}\|_{X_m}+\mathcal V_{\bbm,m}^{(w)}Q_{\bbm,\star}D_\nu|f^{(\bbm)}|_\infty.
\end{equation}

Next, we bound the tail $v^{(\infty)}$. Let $\cL_\infty$ be the restriction of $\cL$ to the tail. Since $\cL_\infty$ is diagonal, $\|e^{\cL_\infty t}a^{(\infty)}\|_\nu\le e^{\mu_{\bbm}t}\|a^{(\infty)}\|_\nu$ holds. The variation of constants formula for the tail gives
\[
 v^{(\infty)}(t)=e^{\cL_\infty(t-\tau_{m-1})}\phi^{(\infty)}+\int_{\tau_{m-1}}^te^{\cL_\infty(t-s)}\Bigl(\bigl(\cQ D\cN(\bu_m(s))v(s)\bigr)^{(\infty)}+(\cQ_\star f)^{(\infty)}(s)\Bigr)ds.
\]
We define the constants
\begin{equation}\label{eq:Vinfq}
 V_{\infty,0}^{(J_m,\tau_{m-1})}\bydef\sup_{\bk\notin\FK}\int_0^{h_m}e^{\mu_{\bk}r}dr,\qquad
 V_{\infty,q}^{(J_m,\tau_{m-1})}\bydef\sup_{\bk\notin\FK}\int_0^{h_m}e^{\mu_{\bk}r}\qk\,dr,
\end{equation}
which are finite since the order of $\cQ$ is strictly lower than that of $\cL$. Here, the smoothing effect of the semigroup compensates for the loss of regularity in the nonlinear term (see Appendix~\ref{app:one_step_details} for the Ohta--Kawasaki equation). We bound the contributions of the finite part and of the tail to the tail by
\begin{equation}\label{eq:cout_def}
 c_{\mathrm{out},m}^T\bydef V_{\infty,q}^{(J_m,\tau_{m-1})}\varPhi_m^TD_\nu^{-1},\quad
 [\varPhi_m]_{\bj}\bydef\sum_{\bk\notin\FK}|D\cN(\bu_m)_{\bk,\bj}|_\infty\alpha_{\bk}\nu^{\bk},
\end{equation}
\begin{equation}\label{eq:g22_def}
 g_{22,m}\bydef V_{\infty,q}^{(J_m,\tau_{m-1})}\,g\bigl(\|\bu_m\|_{X_m}\bigr),
\end{equation}
where $g$ is defined in \eqref{eq:N_assumption}. Note that the sum defining $\varPhi_m$ has finitely many nonzero terms. Hence, we obtain
\begin{equation}\label{eq:vinf_abs_bound}
 \|v^{(\infty)}\|_{X_m}\le\|\phi^{(\infty)}\|_\nu+c_{\mathrm{out},m}^TD_\nu|v^{(\bbm)}|_\infty+g_{22,m}\|v^{(\infty)}\|_{X_m}+V_{\infty,\star}^{(J_m,\tau_{m-1})}\|f^{(\infty)}\|_{X_m}.
\end{equation}

Finally, combining \eqref{eq:vbbm_abs_bound} and \eqref{eq:vinf_abs_bound}, we obtain (see Lemma~\ref{lem:finite_tail} below)
\begin{equation}\label{eq:coupled_components}
 z(v)\le\bU_mz(\phi)+\mathsf G_mz(v)+\bV_{\star,m}z(f),
\end{equation}
where the nonnegative $(\m+1)\times(\m+1)$ matrices are defined by
\begin{equation}\label{eq:bUm_bVm_def}
 \bU_m\bydef\begin{pmatrix}\Phi_{\sup,m}&0\\0&1\end{pmatrix},\qquad
 \bV_{\star,m}\bydef\begin{pmatrix}\mathcal V_{\bbm,m}^{(w)}Q_{\bbm,\star}&0\\0&V_{\infty,\star}^{(J_m,\tau_{m-1})}\end{pmatrix},
\end{equation}
\begin{equation}\label{eq:Gm_def}
 \mathsf G_m\bydef\begin{pmatrix}0&c_{\mathrm{in},m}\\c_{\mathrm{out},m}^T&g_{22,m}\end{pmatrix}.
\end{equation}
\begin{lemma}[{\bf Finite-tail inequality}]\label{lem:finite_tail}
Under the assumptions of Lemma~\ref{lem:finite_integral}, the solution $v$ of \eqref{eq:linear_one_step} satisfies \eqref{eq:coupled_components}.
\end{lemma}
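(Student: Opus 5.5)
The plan is to prove the two block rows of \eqref{eq:coupled_components} separately. The finite row follows from \eqref{eq:vbbm_abs_bound} and the tail row from \eqref{eq:vinf_abs_bound}; each displayed estimate is derived from the corresponding variation-of-constants formula. Assembling the two rows then gives the matrix inequality. Under \eqref{eq:profile_vector}, the first $\m$ rows of $\bU_m z(\phi)+\mathsf G_m z(v)+\bV_{\star,m}z(f)$ are exactly $\Phi_{\sup,m}D_\nu|\phi^{(\bbm)}|+c_{\mathrm{in},m}\|v^{(\infty)}\|_{X_m}+\mathcal V^{(w)}_{\bbm,m}Q_{\bbm,\star}D_\nu|f^{(\bbm)}|_\infty$. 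The last row is $\|\phi^{(\infty)}\|_\nu+c_{\mathrm{out},m}^TD_\nu|v^{(\bbm)}|_\infty+g_{22,m}\|v^{(\infty)}\|_{X_m}+V_{\infty,\star}\|f^{(\infty)}\|_{X_m}$. So the lemma reduces to justifying those two displays.

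For the finite row, I split \eqref{eq:vm_coupled} into the homogeneous term and the integral term. The homogeneous term has componentwise supremum bounded by $|C^{(\bbm)}_m(\cdot,\tau_{m-1})|_\infty|\phi^{(\bbm)}|$. Conjugating by $D_\nu$ gives $\Phi_{\sup,m}D_\nu|\phi^{(\bbm)}|$. The integral term is the solution $c$ of $c'=A_{\bbm}(t)c+b$, $c(\tau_{m-1})=0$, with $b=(\cQ D\cN(\bu_m)v^{(\infty)})^{(\bbm)}+(\cQ_\star f)^{(\bbm)}$. Lemma~\ref{lem:finite_integral} gives $|c|_\infty\le\mathcal V_{\bbm,m}|b|_\infty$, and it is here that nonnegativity of $\mathcal V_{\bbm,m}$ lets me bound $|b|_\infty$ termwise. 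For the coupling part, \eqref{eq:psi_def} gives $|b_1|_\infty\le Q_{\bbm}\Psi_m\|v^{(\infty)}\|_{X_m}$, using $\sup_t\|v^{(\infty)}(t)\|_\nu\le\|v^{(\infty)}\|_{X_m}$. The forcing part satisfies $|(\cQ_\star f)^{(\bbm)}|_\infty=Q_{\bbm,\star}|f^{(\bbm)}|_\infty$. Multiplying by $D_\nu$, inserting $D_\nu^{-1}D_\nu$, and using that diagonal matrices commute then yields \eqref{eq:vbbm_abs_bound}.

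For the tail row, I work modewise with the tail variation-of-constants formula. For each $\bk\notin\FK$ I take absolute values, multiply by $\alpha_{\bk}\nu^{\bk}$, take the supremum in $t$, and sum. The initial term gives at most $\|\phi^{(\infty)}\|_\nu$, since $e^{\mu_{\bk}(t-\tau_{m-1})}\le1$. In the $\cQ_\star f$ term, the factor $(\kk\bL)^{2q}$ is absorbed into $\int_0^{h_m}e^{\mu_{\bk}r}(\kk\bL)^{2q}dr\le V_{\infty,\star}$, leaving $\sum_{\bk}\alpha_{\bk}\nu^{\bk}|f_{\bk}|_\infty$.

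The main obstacle is the coupling term $\cQ D\cN(\bu_m)v$, which I split as $v=v^{(\bbm)}+v^{(\infty)}$. The finite-to-tail part gives $\sum_{\bk\notin\FK}\alpha_{\bk}\nu^{\bk}V_{\infty,q}\sum_{\bj\in\FK}|D\cN(\bu_m)_{\bk,\bj}|_\infty|v_{\bj}|_\infty$. Exchanging the sums gives $V_{\infty,q}\varPhi_m^TD_\nu^{-1}D_\nu|v^{(\bbm)}|_\infty=c_{\mathrm{out},m}^TD_\nu|v^{(\bbm)}|_\infty$. The tail-to-tail part needs care, because $V_{\infty,q}$ is a supremum over modes of the integrated kernel while $D\cN$ is bounded only in norm. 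I would first pull out the modewise factor, bounding each mode by $\int e^{\mu_{\bk}(t-s)}(\kk\bL)^{2q}|(D\cN(\bu_m(s))v^{(\infty)}(s))_{\bk}|ds\le V_{\infty,q}\sup_s|(D\cN v^{(\infty)})_{\bk}(s)|$. I would then sum and bound the result by $\|D\cN(\bu_m)v^{(\infty)}\|_{X_m}$. The bound $\|D\cN(\bu_m)w\|_{X_m}\le g(\|\bu_m\|_{X_m})\|w\|_{X_m}$ is the $X_m$ analogue of \eqref{eq:N_assumption}, using that $X_m$ inherits the Banach algebra property since $|(a\ast b)_{\bk}|_\infty\le(|a|_\infty\ast|b|_\infty)_{\bk}$. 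Combining the four pieces gives \eqref{eq:vinf_abs_bound} and completes \eqref{eq:coupled_components}. Mild versus classical solutions needs no separate treatment: the tail equation is diagonal plus a bounded perturbation, so the variation-of-constants formulas hold coefficientwise.
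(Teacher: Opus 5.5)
Your proposal is correct and follows the paper's proof. The finite row comes from \eqref{eq:vm_coupled} together with \eqref{eq:Phi_sup_m}, Lemma~\ref{lem:finite_integral} (using $\mathcal V_{\bbm,m}\ge0$) and \eqref{eq:psi_def}, and the tail row comes from the tail variation-of-constants formula with \eqref{eq:Vinfq}, \eqref{eq:cout_def}, \eqref{eq:g22_def} and the derivative bound \eqref{eq:N_assumption}. Your modewise treatment of the tail, which takes each coefficient's time supremum before applying $V_{\infty,\star}$ and uses the $X_m$ version of \eqref{eq:N_assumption}, only makes explicit what the paper's short proof leaves implicit. The one slip is calling $\cQ D\cN(\bu_m)$ a bounded perturbation when $q=1$; this is harmless, because by finite bandwidth each tail coefficient involves only finitely many terms.
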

\begin{proof}
For the finite part, we bound the first term of \eqref{eq:vm_coupled} by \eqref{eq:Phi_sup_m}, and the integral term by Lemma~\ref{lem:finite_integral} with $b=\bigl(\cQ D\cN(\bu_m)v^{(\infty)}\bigr)^{(\bbm)}+(\cQ_\star f)^{(\bbm)}$, together with \eqref{eq:psi_def}. This gives \eqref{eq:vbbm_abs_bound}. For the tail, we use the variation of constants formula for $v^{(\infty)}$, the bound $\|e^{\cL_\infty t}a^{(\infty)}\|_\nu\le e^{\mu_{\bbm}t}\|a^{(\infty)}\|_\nu$, the definitions \eqref{eq:Vinfq}, \eqref{eq:cout_def}, and \eqref{eq:g22_def}, and the estimate \eqref{eq:N_assumption}. This gives \eqref{eq:vinf_abs_bound}. Combining the two bounds proves \eqref{eq:coupled_components}.
\end{proof}
The upper left block of $\mathsf G_m$ vanishes, since the interactions between the finite modes are already included in $C_m^{(\bbm)}$. Hence, $I-\mathsf G_m$ has a nonnegative inverse if and only if the Schur complement
\begin{equation}\label{eq:schur_def}
 s_m\bydef1-g_{22,m}-c_{\mathrm{out},m}^Tc_{\mathrm{in},m}
\end{equation}
is positive. In this case, the inverse is given explicitly by
\begin{equation}\label{eq:Bm_schur}
 B_m\bydef(I-\mathsf G_m)^{-1}=\begin{pmatrix}I+s_m^{-1}c_{\mathrm{in},m}c_{\mathrm{out},m}^T&s_m^{-1}c_{\mathrm{in},m}\\s_m^{-1}c_{\mathrm{out},m}^T&s_m^{-1}\end{pmatrix}\ge0.
\end{equation}
Note that $s_m>0$ if and only if $g_{22,m}+c_{\mathrm{out},m}^Tc_{\mathrm{in},m}<1$.

\begin{proposition}[{\bf One-step bound}]\label{prop:one_step}
If $B_{\mathrm{neu},m}^{-1}\ge0$ and $s_m>0$, then the solution of \eqref{eq:linear_one_step} satisfies
\begin{equation}\label{eq:onestep_resolved}
 z(v)\le\mathsf U_m\,z(\phi)+\mathsf V_{\star,m}\,z(f),\qquad
 \mathsf U_m\bydef B_m\bU_m,\quad \mathsf V_{\star,m}\bydef B_m\bV_{\star,m},
\end{equation}
and the same bound holds for $z(v(\tau_m))$.
\end{proposition}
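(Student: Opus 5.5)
The plan is to start from the implicit inequality \eqref{eq:coupled_components} of Lemma~\ref{lem:finite_tail} and resolve it using the nonnegative inverse $B_m=(I-\mathsf G_m)^{-1}$ from \eqref{eq:Bm_schur}. The assumption $B_{\mathrm{neu},m}^{-1}\ge0$ is exactly what Lemma~\ref{lem:finite_integral}, and hence Lemma~\ref{lem:finite_tail}, require. So for the solution $v$ of \eqref{eq:linear_one_step} we have
\[
 (I-\mathsf G_m)z(v)\le\bU_mz(\phi)+\bV_{\star,m}z(f),
\]
provided every quantity here is finite. This finiteness is the one point that needs care. Each $v^{(\bbm)}$ component is continuous on the compact interval $J_m$, so $D_\nu|v^{(\bbm)}|_\infty$ is finite. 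For the tail we need $\|v^{(\infty)}\|_{X_m}<\infty$, i.e.\ $v\in X_m$. I would obtain this from the mild-solution framework, either by a standard Picard iteration of the tail integral equation in $X_m$, or by applying the same estimates to Galerkin truncations (for which all norms are finite) and passing to the limit. I expect this a priori finiteness to be the main subtlety; everything else is linear algebra.

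Next, since $s_m>0$, \eqref{eq:Bm_schur} gives $B_m\ge0$. Multiplying the vector inequality on the left by a nonnegative matrix preserves componentwise order, which yields
\[
 z(v)=B_m(I-\mathsf G_m)z(v)\le B_m\bU_mz(\phi)+B_m\bV_{\star,m}z(f)=\mathsf U_mz(\phi)+\mathsf V_{\star,m}z(f).
\]
To confirm that \eqref{eq:Bm_schur} really is the inverse, I would multiply it out block by block against $I-\mathsf G_m=\begin{pmatrix}I&-c_{\mathrm{in},m}\\-c_{\mathrm{out},m}^T&1-g_{22,m}\end{pmatrix}$, using the definition \eqref{eq:schur_def} of $s_m$. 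Its nonnegativity follows because $c_{\mathrm{in},m},c_{\mathrm{out},m}\ge0$ and $s_m>0$.

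Finally, the endpoint claim. Each component of $z(v(\tau_m))$ is bounded by the corresponding component of $z(v)$: for finite modes $|v_{\bk}(\tau_m)|\le|v_{\bk}|_\infty$, and for the tail $\|v^{(\infty)}(\tau_m)\|_\nu\le\|v^{(\infty)}\|_{X_m}$ by taking the supremum coefficientwise before summing. The bound for $z(v(\tau_m))$ therefore follows from the bound for $z(v)$.
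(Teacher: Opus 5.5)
Your proposal is correct and follows the paper's proof. You invoke Lemma~\ref{lem:finite_tail} to get $(I-\mathsf G_m)z(v)\le\bU_mz(\phi)+\bV_{\star,m}z(f)$, multiply by $B_m\ge0$, and obtain the endpoint bound because the entries of $z(v)$ are suprema over $J_m\ni\tau_m$. The paper leaves two points implicit that you make explicit: the a priori finiteness of $\|v^{(\infty)}\|_{X_m}$, which is needed to move $\mathsf G_m z(v)$ to the left-hand side, and the block check of \eqref{eq:Bm_schur}.
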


\begin{proof}
By Lemma~\ref{lem:finite_tail}, \eqref{eq:coupled_components} holds, so that $(I-\mathsf G_m)z(v)\le\bU_mz(\phi)+\bV_{\star,m}z(f)$. Multiplying this inequality by $B_m\ge0$, we obtain the first bound. The endpoint bound holds because the entries of $z(v)$ are suprema over $J_m\ni\tau_m$.
\end{proof}

\subsubsection{Several steps}\label{sec:multistep}

Iterating Proposition~\ref{prop:one_step} would bound $U(\tau_i,\tau_j)$ by a product of the nonnegative matrices $\mathsf U_k$, and the cancellations between Fourier modes would be lost. To avoid this, we decompose the evolution operator on each time step as
\begin{equation}\label{eq:Ek_def}
 E_k\bydef\operatorname{diag}\bigl(C_k^{(\bbm)}(\tau_k,\tau_{k-1}),e^{\cL_\infty h_k}\bigr),\qquad
 \widetilde C_k\bydef U_k(\tau_k,\tau_{k-1})-E_k,
\end{equation}
and use the telescoping identity
\begin{equation}\label{eq:telescoping}
 U(\tau_i,\tau_j)-E_i\cdots E_{j+1}=\sum_{k=j+1}^iE_i\cdots E_{k+1}\,\widetilde C_k\,U(\tau_{k-1},\tau_j).
\end{equation}
The product $E_i\cdots E_{j+1}$ is computed without taking absolute values, while the remainders $\widetilde C_k$, which contain the coupling between the finite part and the tail, are bounded through their block norms.

We first bound the product $E_i\cdots E_{j+1}$. Let $\Phi_{\mathrm{end},k}\in\mathbb{IR}^{\m\times\m}$ be an interval enclosure of the fundamental solution matrix at the endpoint,
\begin{equation}\label{eq:Phi_def}
 C_k^{(\bbm)}(\tau_k,\tau_{k-1})\in\Phi_{\mathrm{end},k},
\end{equation}
which is obtained from the same Chebyshev enclosure as $\Phi_{\sup,k}$ or as in \eqref{eq:Phi_endpt_enclosure}. We define the signed interval products
\begin{equation}\label{eq:bPhi}
 \Phi_{\mathrm{end},i,j}\bydef\Phi_{\mathrm{end},i}\cdots\Phi_{\mathrm{end},j+1},\qquad\Phi_{\mathrm{end},j,j}\bydef I,
\end{equation}
and take absolute values only after the products are formed. Denoting by $\operatorname{mag}(A)$ an entrywise upper bound of the absolute values of an interval matrix $A$, and setting
\begin{equation}\label{eq:theta_def}
 \varsigma_{i,j}\bydef e^{\mu_{\bbm}(\tau_i-\tau_j)},
\end{equation}
we bound the product $E_i\cdots E_{j+1}$ by means of
\begin{equation}\label{eq:R_uncoupled}
 R^{(\mathrm{unc})}_{i,j}\bydef\operatorname{diag}\bigl(D_\nu\operatorname{mag}(\Phi_{\mathrm{end},i,j})D_\nu^{-1},\varsigma_{i,j}\bigr).
\end{equation}
A similar approach is used in \cite[Definition~4.3]{JBMaxime2}.

Next, we bound the remainders $\widetilde C_k$. For a block operator or matrix $A$ with respect to the decomposition into the finite part and the tail, and for $x\in\R_+^{\m+1}$, we define
\begin{equation}\label{eq:block_norm_matrix}
 \mathfrak b(A)\bydef\begin{pmatrix}\|A_{11}\|_1&\|A_{12}\|_1\\\|A_{21}\|_1&\|A_{22}\|_1\end{pmatrix},\qquad
 \operatorname{pair}(x)\bydef\begin{pmatrix}\sum_{\bk\in\FK}x_{\bk}\\x_\infty\end{pmatrix},
\end{equation}
where the norms are induced by the weighted finite space and the tail space (for matrices acting on the vectors $z$, the matrix $1$-norm is used). The map $\mathfrak b$ is subadditive and submultiplicative, and $\operatorname{pair}(Nx)\le\mathfrak b(N)\operatorname{pair}(x)$ holds for $N\ge0$. The block norms of $E_i\cdots E_{j+1}$ are bounded by
\begin{equation}\label{eq:R_uncoupled_blocks}
 \bar E_{i,j}\bydef\mathfrak b\bigl(R^{(\mathrm{unc})}_{i,j}\bigr)=\operatorname{diag}\bigl(\mathfrak n(i,j),\varsigma_{i,j}\bigr),\qquad
 \mathfrak n(i,j)\bydef\|D_\nu\operatorname{mag}(\Phi_{\mathrm{end},i,j})D_\nu^{-1}\|_1.
\end{equation}
From the variation of constants formula on $J_k$, $\widetilde C_k\phi$ is the value at $\tau_k$ of the terms involving $\mathsf G_k$ in \eqref{eq:coupled_components}, and its block norms are bounded by
\begin{equation}\label{eq:Delta_k_def}
 \widetilde\triangle_k\bydef\mathfrak b(\mathsf G_k\mathsf U_k).
\end{equation}
Motivated by \eqref{eq:telescoping}, we define recursively
\begin{equation}\label{eq:decomposition}
 \widetilde\triangle_{i,j}\bydef\sum_{k=j+1}^i\bar E_{i,k}\,\widetilde\triangle_k\,\bigl(\bar E_{k-1,j}+\widetilde\triangle_{k-1,j}\bigr),\qquad\widetilde\triangle_{j,j}\bydef0,
\end{equation}
where the three factors bound the block norms of $E_i\cdots E_{k+1}$, $\widetilde C_k$, and $U(\tau_{k-1},\tau_j)$, respectively.

\begin{lemma}[{\bf Endpoint propagation}]\label{lem:endpoint_propagation}
Let $0\le j<i\le M$, and assume the hypotheses of Proposition~\ref{prop:one_step} and \eqref{eq:Phi_def} on each time step $J_k$, $j<k\le i$. Then $z(E_i\cdots E_{j+1}\phi)\le R^{(\mathrm{unc})}_{i,j}z(\phi)$ for all $\phi\in\ell_\nu^1$, $\mathfrak b(\widetilde C_k)\le\widetilde\triangle_k$, and
\begin{equation}\label{eq:product_remainder_bound}
 \mathfrak b\bigl(U(\tau_i,\tau_j)-E_i\cdots E_{j+1}\bigr)\le\widetilde\triangle_{i,j},
\end{equation}
\begin{equation}\label{eq:full_endpoint_bound}
 \mathfrak b\bigl(U(\tau_i,\tau_j)\bigr)\le\bar E_{i,j}+\widetilde\triangle_{i,j}.
\end{equation}
\end{lemma}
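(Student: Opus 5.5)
I will prove the four claims in the order stated; the last two follow from the first two by a telescoping induction.

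\emph{Uncoupled product.} $E_i\cdots E_{j+1}$ is block diagonal. Its finite block is $C_i^{(\bbm)}(\tau_i,\tau_{i-1})\cdots C_{j+1}^{(\bbm)}(\tau_{j+1},\tau_j)$. By \eqref{eq:Phi_def} and the inclusion monotonicity of interval arithmetic, this product lies in $\Phi_{\mathrm{end},i,j}$, so its entries are bounded in absolute value by $\operatorname{mag}(\Phi_{\mathrm{end},i,j})$. Conjugating by $D_\nu$ gives $D_\nu|(E\phi)^{(\bbm)}|\le D_\nu\operatorname{mag}(\Phi_{\mathrm{end},i,j})D_\nu^{-1}D_\nu|\phi^{(\bbm)}|$. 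The tail block is $e^{\cL_\infty(\tau_i-\tau_j)}$, which is diagonal with entries $e^{\mu_{\bk}(\tau_i-\tau_j)}\le\varsigma_{i,j}$ by \eqref{eq:mu_bbm_def}. This gives $z(E_i\cdots E_{j+1}\phi)\le R^{(\mathrm{unc})}_{i,j}z(\phi)$. Since $\mathfrak b$ of a block-diagonal operator has zero off-diagonal entries, and the weighted $1$-norm of the finite block equals $\|D_\nu\cdot D_\nu^{-1}\|_1$ on the vectors $z$, we also get $\mathfrak b(E_i\cdots E_{j+1})\le\bar E_{i,j}$.

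\emph{One-step remainder.} Let $v=U_k(\cdot,\tau_{k-1})\phi$, with $f=0$. I split the two variation-of-constants formulas in the proof of Lemma~\ref{lem:finite_tail}. The finite part is $C_k^{(\bbm)}\phi^{(\bbm)}$ plus the coupling integral. The tail part is $e^{\cL_\infty h}\phi^{(\infty)}$ plus the integral of $(\cQ D\cN(\bu_k)v)^{(\infty)}$. Hence $\widetilde C_k\phi$ is exactly the sum of the two integral terms evaluated at $\tau_k$. The estimates \eqref{eq:vbbm_abs_bound} and \eqref{eq:vinf_abs_bound} then give $z(\widetilde C_k\phi)\le\mathsf G_k z(v)$. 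By Proposition~\ref{prop:one_step}, $z(v)\le\mathsf U_kz(\phi)$, so $z(\widetilde C_k\phi)\le\mathsf G_k\mathsf U_kz(\phi)$, using $\mathsf G_k\ge0$. To pass from this componentwise bound to block norms, I apply $\operatorname{pair}$ and use $\operatorname{pair}(Nx)\le\mathfrak b(N)\operatorname{pair}(x)$. Testing on $\phi$ supported in only the finite part or only the tail yields $\mathfrak b(\widetilde C_k)\le\mathfrak b(\mathsf G_k\mathsf U_k)=\widetilde\triangle_k$.

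\emph{Telescoping identity.} I verify \eqref{eq:telescoping} by induction on $i$, using $U(\tau_i,\tau_j)=(E_i+\widetilde C_i)U(\tau_{i-1},\tau_j)$, which follows from the composition law. Then:
\begin{itemize}
\item I take $\mathfrak b$ of both sides of \eqref{eq:telescoping} and use subadditivity and submultiplicativity.
\item I apply the bounds already proved for $E_i\cdots E_{k+1}$ and for $\widetilde C_k$.
\item For the last factor, I use the inductive hypothesis $\mathfrak b(U(\tau_{k-1},\tau_j))\le\bar E_{k-1,j}+\widetilde\triangle_{k-1,j}$ for $k-1<i$. This is \eqref{eq:full_endpoint_bound} at a smaller index, and it holds trivially when $k-1=j$.
\end{itemize}
This yields \eqref{eq:product_remainder_bound} with exactly the recursion \eqref{eq:decomposition}. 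Then \eqref{eq:full_endpoint_bound} follows from subadditivity: write $U=E+(U-E)$. So the induction closes on the pair of statements.

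The main obstacle is justifying that $\mathfrak b$ is submultiplicative and interacts correctly with the componentwise bounds. In particular, the induced block norm $\|A_{11}\|_1$ of a signed operator must be bounded by the matrix $1$-norm of a nonnegative majorant in the $z$-coordinates. I will handle this through the column-sum characterization of the induced $\ell^1$ operator norm in the weighted finite space and the tail space.
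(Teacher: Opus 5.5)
Your proposal is correct and follows the paper's proof step for step. You enclose the block-diagonal product by inclusion monotonicity of the interval products, identify $\widetilde C_k\phi$ with the $\mathsf G_k$-terms of \eqref{eq:coupled_components} to get $z(\widetilde C_k\phi)\le\mathsf G_k\mathsf U_kz(\phi)$, and apply $\mathfrak b$ to \eqref{eq:telescoping} with subadditivity, submultiplicativity and induction, exactly as the paper does; your extra checks (deriving the telescoping identity from the composition law, the base case $\bar E_{j,j}+\widetilde\triangle_{j,j}=\mathfrak b(I)$, and testing on finite-only or tail-only data to pass to block operator norms) fill in details the paper leaves implicit.
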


The proof is given in Appendix~\ref{app:multistep_proof}. The one-step quantities can be computed independently. Forming all signed finite endpoint products over $M$ steps requires $O(M^2)$ finite-matrix multiplications, while the coupling corrections are assembled separately through the recurrence \eqref{eq:decomposition}.

\subsubsection{\texorpdfstring{The bounds \boldmath$W^{(J_m,\tau_j)}$\boldmath~and \boldmath$V_\star^{(J_m,\tau_j)}$\boldmath}{The bounds W and V}}\label{sec:propagation_assembly}

We now combine the above bounds (see Figure~\ref{fig:propagation_timeline}). The initial data at $\tau_j$ are propagated to $\tau_{m-1}$ and then bounded on $J_m$ by $\mathsf U_m$, while the inhomogeneous term is bounded on $J_{j+1}$ by $\mathsf V_{\star,j+1}$ and then propagated in the same way. Note that the uncoupled products are multiplied by the one-step matrices before the block norms are taken, so that the bound on each Fourier mode is kept, whereas the corrections are bounded through their block norms.

\begin{figure}[h!]
\centering
\begin{tikzpicture}[x=1cm,y=1cm,>=stealth]
 \draw[thick] (0,0) -- (4,0);
 \draw[thick,dashed] (4,0) -- (7,0);
 \draw[thick,->] (7,0) -- (11.4,0) node[right] {$t$};
 \foreach \x/\lab in {0/{\tau_j},3/{\tau_{j+1}},7.5/{\tau_{m-1}},10.5/{\tau_m}}{
   \draw[thick] (\x,0.12) -- (\x,-0.12) node[below] {$\lab$};}
 \draw[decorate,decoration={brace,amplitude=5pt}] (0,0.3) -- (3,0.3)
   node[midway,above=5pt] {\small $J_{j+1}$};
 \draw[decorate,decoration={brace,amplitude=5pt}] (3,0.3) -- (7.5,0.3)
   node[midway,above=5pt] {\small $J_{j+2},\dots,J_{m-1}$};
 \draw[decorate,decoration={brace,amplitude=5pt}] (7.5,0.3) -- (10.5,0.3)
   node[midway,above=5pt] {\small $J_m$};
 \node[left] at (-0.2,-1.4) {\small $\phi$ at $\tau_j$};
 \draw[->,thick] (0,-1.4) -- node[above]{\small $R^{(\mathrm{unc})}_{m-1,j}$, $\widetilde\triangle_{m-1,j}$} (7.45,-1.4);
 \draw[->,thick] (7.5,-1.4) -- node[above]{\small $\mathsf U_m$} (10.5,-1.4);
 \node[left] at (-0.2,-2.5) {\small $f$ on $J_{j+1}$};
 \draw[->,thick] (0,-2.5) -- node[above]{\small $\mathsf V_{\star,j+1}$} (2.95,-2.5);
 \draw[->,thick] (3,-2.5) -- node[above]{\small $R^{(\mathrm{unc})}_{m-1,j+1}$, $\widetilde\triangle_{m-1,j+1}$} (7.45,-2.5);
 \draw[->,thick] (7.5,-2.5) -- node[above]{\small $\mathsf U_m$} (10.5,-2.5);
\end{tikzpicture}
\caption{Construction of the bounds on $W_{m,j}$ (top) and $V_{\star,m,j}$ (bottom) for $j\le m-2$. The matrices $\mathsf U_m$ and $\mathsf V_{\star,j+1}$ are defined in Section~\ref{sec:one-step}, and $R^{(\mathrm{unc})}$ and $\widetilde\triangle$ in Section~\ref{sec:multistep}.}\label{fig:propagation_timeline}
\end{figure}

\begin{proposition}[{\bf Multi-step propagator bounds}]\label{prop:multistep_bounds}
Assume the hypotheses of Lemma~\ref{lem:endpoint_propagation} on each time step $J_k$, $j<k\le m$. Then \eqref{eq:WVbounds} holds with
\begin{align}
 \bm{W^{(J_m,\tau_j)}}&\bydef\bigl\|\mathfrak b\bigl(\mathsf U_mR^{(\mathrm{unc})}_{m-1,j}\bigr)+\mathfrak b(\mathsf U_m)\,\widetilde\triangle_{m-1,j}\bigr\|_1,\label{eq:SR_split}\\
 \bm{V_\star^{(J_m,\tau_j)}}&\bydef\bigl\|\mathfrak b\bigl(\mathsf U_mR^{(\mathrm{unc})}_{m-1,j+1}\mathsf V_{\star,j+1}\bigr)+\mathfrak b(\mathsf U_m)\,\widetilde\triangle_{m-1,j+1}\,\mathfrak b(\mathsf V_{\star,j+1})\bigr\|_1\label{eq:SRV_split}
\end{align}
for $j\le m-2$, and
\begin{equation}\label{eq:WV_onestep}
 \bm{W^{(J_m,\tau_{m-1})}}\bydef\|\mathfrak b(\mathsf U_m)\|_1,\qquad\bm{V_\star^{(J_m,\tau_{m-1})}}\bydef\|\mathfrak b(\mathsf V_{\star,m})\|_1.
\end{equation}
\end{proposition}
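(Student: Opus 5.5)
The proof is organized around the vector $z(\cdot)$ from \eqref{eq:profile_vector}. By \eqref{eq:profile_properties}, an operator norm bound in $B(\ell_\nu^1,X_m)$ follows once we show $\|z(W_{m,j}\phi)\|_1\le \bm{W^{(J_m,\tau_j)}}\|z(\phi)\|_1$, and similarly for $V_{\star,m,j}$. For the one-step case $j=m-1$ the bound comes directly from Proposition~\ref{prop:one_step}. With $f=0$ we have $z(W_{m,m-1}\phi)\le\mathsf U_m z(\phi)$. Applying $\operatorname{pair}$ and using $\operatorname{pair}(Nx)\le\mathfrak b(N)\operatorname{pair}(x)$ for $N\ge0$ gives $\|z(v)\|_1=\|\operatorname{pair}(z(v))\|_1\le\|\mathfrak b(\mathsf U_m)\|_1\|\operatorname{pair}(z(\phi))\|_1$. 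Here $\|\cdot\|_1$ on $2\times2$ matrices is the induced $1$-norm, and $\|\operatorname{pair}(x)\|_1=\|x\|_1$ for $x\ge0$. The same argument with $\phi=0$ handles $V_{\star,m,m-1}$ and gives \eqref{eq:WV_onestep}.

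For $j\le m-2$, write $W_{m,j}\phi=U_m(\cdot,\tau_{m-1})\psi$ with $\psi\bydef U(\tau_{m-1},\tau_j)\phi$. We split $\psi=E_{m-1}\cdots E_{j+1}\phi+\rho$, where $\rho$ is the telescoping remainder from \eqref{eq:telescoping}. By subadditivity of $z$ and Proposition~\ref{prop:one_step},
\[
 z(W_{m,j}\phi)\le\mathsf U_m z(E_{m-1}\cdots E_{j+1}\phi)+\mathsf U_m z(\rho)\le \mathsf U_m R^{(\mathrm{unc})}_{m-1,j}z(\phi)+\mathsf U_m z(\rho),
\]
where the second inequality uses Lemma~\ref{lem:endpoint_propagation}. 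Since $\mathsf U_mR^{(\mathrm{unc})}_{m-1,j}\ge0$, applying $\operatorname{pair}$ to the first term gives $\mathfrak b(\mathsf U_mR^{(\mathrm{unc})}_{m-1,j})\operatorname{pair}(z(\phi))$. For the second term we use $\operatorname{pair}(z(\rho))\le\mathfrak b(U(\tau_{m-1},\tau_j)-E_{m-1}\cdots E_{j+1})\operatorname{pair}(z(\phi))\le\widetilde\triangle_{m-1,j}\operatorname{pair}(z(\phi))$, which follows from \eqref{eq:product_remainder_bound}.

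This inequality needs some care. It relates the block-norm definition of $\mathfrak b$ for an operator, induced by the weighted finite-part norm and the tail norm, to $\operatorname{pair}$. Concretely, for a block operator $A$ we have $\|(A\phi)^{(\bbm)}\|\le\|A_{11}\|\|\phi^{(\bbm)}\|+\|A_{12}\|\|\phi^{(\infty)}\|$, and $\|\cdot\|$ of the finite part equals the sum of the entries of the finite part of $z$. Combining the two terms, summing, and taking the $1$-norm of the resulting $2$-vector yields \eqref{eq:SR_split}.

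The bound for $V_{\star,m,j}$ with $j\le m-2$ follows the same pattern. First we bound the endpoint value $w\bydef\int_{\tau_j}^{\tau_{j+1}}U_{j+1}(\tau_{j+1},s)\cQ_\star f(s)\,ds$. This is the value at $\tau_{j+1}$ of the solution of \eqref{eq:linear_one_step} with zero initial data, so the endpoint clause of Proposition~\ref{prop:one_step} gives $z(w)\le\mathsf V_{\star,j+1}z(f)$. We then propagate $w$ from $\tau_{j+1}$ to $\tau_{m-1}$ by the split above with $j$ replaced by $j+1$, and finally apply $\mathsf U_m$. The uncoupled term stays as a product of nonnegative matrices, $\mathsf U_mR^{(\mathrm{unc})}_{m-1,j+1}\mathsf V_{\star,j+1}z(f)$, before $\operatorname{pair}$ is applied. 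The remainder term gives $\mathfrak b(\mathsf U_m)\widetilde\triangle_{m-1,j+1}\operatorname{pair}(z(w))\le\mathfrak b(\mathsf U_m)\widetilde\triangle_{m-1,j+1}\mathfrak b(\mathsf V_{\star,j+1})\operatorname{pair}(z(f))$, using monotonicity since all the matrices involved are nonnegative. This is \eqref{eq:SRV_split}.

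I expect the main obstacle to be the bookkeeping between the operator block norm $\mathfrak b$ (used in Lemma~\ref{lem:endpoint_propagation}) and the componentwise vector bounds (used in Proposition~\ref{prop:one_step}). In particular, we must check that the one-step inequality applied to a nonzero initial datum $\rho$ is compatible with knowing only $\operatorname{pair}(z(\rho))$, which is exactly what $\operatorname{pair}(\mathsf U_m z(\rho))\le\mathfrak b(\mathsf U_m)\operatorname{pair}(z(\rho))$ provides. We also need to confirm that the $X_m$-norm on $J_m$ uses suprema, so the initial-data case is measured over the whole step rather than only at $\tau_m$; this is already built into Proposition~\ref{prop:one_step}.
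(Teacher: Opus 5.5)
Your proposal is correct and follows essentially the same route as the paper's proof. The case $j=m-1$ comes directly from Proposition~\ref{prop:one_step}. For $j\le m-2$, you split $U(\tau_{m-1},\tau_j)$ (resp.\ $U(\tau_{m-1},\tau_{j+1})$, after applying $\mathsf V_{\star,j+1}$ on $J_{j+1}$) by the telescoping identity. You keep the nonnegative product $\mathsf U_mR^{(\mathrm{unc})}_{m-1,j}$ (resp.\ $\mathsf U_mR^{(\mathrm{unc})}_{m-1,j+1}\mathsf V_{\star,j+1}$) intact before applying $\operatorname{pair}$, bound the remainder by $\mathfrak b(\mathsf U_m)\widetilde\triangle$ using $\operatorname{pair}(z(A\phi))\le\mathfrak b(A)\operatorname{pair}(z(\phi))$, and conclude with the induced $1$-norm.

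The only difference is cosmetic: you treat $W_{m,j}$ and $V_{\star,m,j}$ separately, whereas the paper proves one combined inequality for the full solution $v$ with both initial data and forcing, which is equivalent by linearity. The degenerate case $j=m-2$, where $R^{(\mathrm{unc})}_{m-1,m-1}=I$ and $\widetilde\triangle_{m-1,m-1}=0$, is covered by your conventions.
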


The proof is given in Appendix~\ref{app:multistep_proof}. For $\star=0$, these bounds are used in $Y_m$ in \eqref{eq:Ym_bound}, and for $\star=q$, in $Z_m^i$ in \eqref{eq:Zmi_bound}. In practice, $\bbm$ is chosen sufficiently large that $s_k>0$, since $c_{\mathrm{out},k}$ and $g_{22,k}$ are proportional to $V_{\infty,q}^{(J_k,\tau_{k-1})}$, which decreases as $\bbm$ increases. The step sizes are chosen sufficiently small that $B_{\mathrm{neu},k}^{-1}\ge0$, since $\tV^{(\bbm)}_k\le h_k|V|$ when $\mathrm{Re}\,\theta_\ell\le0$.

\section{Global existence via a fixed-point argument}\label{sec:global_in_time}

In this section, we extend the solution obtained in
Section~\ref{sec:multi-stepping} to all $t\ge0$. Let $\bu_*\in\ell_\nu^1$
be a numerical approximation of a stable equilibrium. For simplicity
of presentation, we do not assume that $f(\bu_*)=0$ (see
Remark~\ref{rem:approximate_equilibrium} below). We first formulate a
fixed-point problem about $\bu_*$ on an unbounded interval. If the
endpoint of the finite-time solution is too far from $\bu_*$ to apply
this argument, we add finite time steps and enclose the solution on
each of them. 
The finite-time trajectory on $[0,\tau_M]$ is validated on the product of its $M$ step spaces. Its verified endpoint enclosure then supplies the incoming data for the intermediate steps, which are validated sequentially when needed. Finally, the resulting endpoint enclosure is used in a separate contraction argument on the unbounded interval. The required linear and nonlinear bounds are verified at the numerical approximations; no separate proof of existence or stability of the equilibrium is assumed.
% The validation proceeds sequentially: the verified endpoint
% enclosure from each stage supplies the incoming data for the next. The required linear and nonlinear bounds are verified
% at the numerical approximations; no separate proof of existence or
% stability of the equilibrium is assumed.
\begin{remark}[{\bf Choice of an approximate equilibrium}]\label{rem:approximate_equilibrium}
A separate enclosure of an equilibrium can be obtained by
computer-assisted methods such as \cite{MR2338393,MR2718657}.
Here we work directly with $\bu_*$ and include $f(\bu_*)$ in
\eqref{eq:Y_global}. The resulting proof gives global existence and
a uniform bound about $\bu_*$.
\end{remark}
A fixed-point argument on an unbounded interval also appears in
\cite{JBMaxime2}. We use the evolution bounds of
Section~\ref{sec:multi-stepping}, in place of the trapping region
used in \cite{MR4852997}. The intermediate steps are described after
the infinite interval, since they are used to meet its initial error bound; the overall structure is shown in Figure~\ref{fig:global_timeline}. From Section~\ref{sec:aut_chain} on, we restrict to the cubic nonlinearity $\cN(u)=\pm u^3$ of Section~\ref{sec:OK_application}; for a general polynomial, the cubic remainder bounds below are replaced by those obtained from \eqref{eq:Zi_polynomial}.

\begin{figure}[h!]
\centering
\begin{tikzpicture}[x=1cm,y=1cm,>=stealth]
 \draw[thick] (0,0) -- (9.5,0);
 \draw[thick,->] (9.5,0) -- (13.2,0) node[right] {$t$};
 \foreach \x in {0.9,1.8,2.7,3.6,5.5,6.5,7.5,8.5}{\draw (\x,0.08) -- (\x,-0.08);}
 \foreach \x/\lab in {0/{0},4.5/{\tau_M},9.5/{t_0=\tau_{M+M_a}}}{
   \draw[thick] (\x,0.15) -- (\x,-0.15) node[below] {$\lab$};}
 \draw[decorate,decoration={brace,amplitude=5pt}] (0,0.35) -- (4.5,0.35)
   node[midway,above=6pt,align=center] {\small $J_1,\dots,J_M$\\[-1pt] \small Section~\ref{sec:multi-stepping}};
 \draw[decorate,decoration={brace,amplitude=5pt}] (4.5,0.35) -- (9.5,0.35)
   node[midway,above=6pt,align=center] {\small $J_{M+1},\dots,J_{M+M_a}$\\[-1pt] \small Section~\ref{sec:aut_chain}};
 \draw[decorate,decoration={brace,amplitude=5pt}] (9.5,0.35) -- (13,0.35)
   node[midway,above=6pt,align=center] {\small $J_\infty=[t_0,\infty)$\\[-1pt] \small Section~\ref{sec:infty_step}};
 \node[align=center] at (4.5,-1.1) {\small endpoint radius\\[-1pt] \small $r_M^{\mathrm{ept}}$};
 \node[align=center] at (9.5,-1.1) {\small incoming bound\\[-1pt] \small $\rho_{M_a}$};
\end{tikzpicture}
\caption{Structure of the global-existence argument. The finite-time validation on $[0,\tau_M]$ is followed by $M_a$ validated intermediate steps, whose endpoint enclosures are propagated until the incoming bound $\rho_{M_a}$ at $t_0$ satisfies the infinite-step test \eqref{eq:radii_infty} on $J_\infty$.}\label{fig:global_timeline}
\end{figure}

We use the finite-tail decomposition and weighted block norms of
Section~\ref{sec:one-step}. As before, boldface $W$ and $V$ denote
scalar upper bounds. Matrix inequalities are componentwise.
The Fourier support $\bN_*$ of $\bu_*$ need not coincide with the
trajectory truncation $\bN$ or the finite linear blocks $\bbm_a$ and
$\bbm$. All modes of $f(\bu_*)$ are included in the residual bound.
When comparing two finite approximations, we extend their coefficients
by zero to the union of their supports.
\subsection{The infinite step}\label{sec:infty_step}

Let $M_a$ be the number of intermediate steps, set $t_0\bydef\tau_{M+M_a}$, and let $J_\infty\bydef[t_0,\infty)$. On $J_\infty$, we take $\bu(t)\equiv\bu_*$ and define
\[
 A_\infty\bydef\cL+\cQ D\cN(\bu_*),\qquad
 G_*(u)\bydef\cN(u)-D\cN(\bu_*)u.
\]
To describe the matching between intervals, write
$u=(u_1,\ldots,u_{M+M_a},u_\infty)$, with $u_\infty\in X_\infty$. Since $f(v)=A_\infty v+\cQ G_*(v)$, the infinite-interval component is defined by
\begin{align}\label{eq:T_infty}
 (T_\infty(u))(t)
 &=e^{A_\infty(t-t_0)}\sigma_{M+M_a}^{(a)}(u)
 +\int_{t_0}^t e^{A_\infty(t-s)}\cQ G_*(u_\infty(s))\,ds,
 \qquad t\in J_\infty.
\end{align}
The endpoint map $\sigma_{M+M_a}^{(a)}$ is defined in
\eqref{eq:sigma_a_def}; for $M_a=0$, it is $\sigma_M$.
For the infinite-step validation, the endpoint of the preceding validated solution is fixed.
% In the sequential validation, the preceding solution has already been validated, and its endpoint is held fixed when applying the contraction argument on $J_\infty$.
We seek a fixed point near the constant function $\bu_*$ in
\[
 X_\infty\bydef\ell^1(C_b^0(J_\infty)),\qquad
 \|u\|_{X_\infty}=\sum_{\bk\ge0}\alpha_{\bk}\nu^{\bk}
 \sup_{t\in J_\infty}|u_{\bk}(t)|.
\]
Here $C_b^0(J_\infty)$ denotes the bounded continuous functions,
and $X_\infty$ consists of coefficient sequences with finite displayed
norm. It is a Banach space, with the same weighted modewise supremum
norm as on the finite intervals.

As in \eqref{eq:W_operator} and \eqref{eq:V_operator}, define
\[
 W_\infty\phi=e^{A_\infty(\cdot-t_0)}\phi,\qquad
 (V_{\star,\infty}f)(t)=\int_{t_0}^t
 e^{A_\infty(t-s)}\cQ_\star f(s)\,ds,
 \quad\star\in\{0,q\},
\]
where $\cQ_0=\mathrm{Id}$ and $\cQ_q=\cQ$, as in
\eqref{eq:linear_one_step}. Their norms will be bounded by $\bm{W_\infty}$ and
$\bm{V_{\star,\infty}}$, respectively.

\paragraph{Linear bounds.}
We apply the estimates of Section~\ref{sec:one-step} to $A_\infty$.
Choose $\bbm$ so that $\mu_{\bbm}<0$, and a numerical diagonalization
$\bA=V\Lambda V^{-1}$ of $A_{\bbm}$ with $\Re\eta_i<0$.
The invertibility of $V$ is checked rigorously. Integrating the
exponentials over $[0,\infty)$ gives
\[
 \Theta_\infty=\operatorname{diag}\!\left(-\frac1{\Re\eta_i}\right),
 \qquad V_{\infty,\star}^{(J_\infty,t_0)}
 =\sup_{\bk\notin\FK}\frac{(\kk\bL)^{2\star}}{|\mu_{\bk}|},
 \quad\star\in\{0,q\}.
\]
The diagonalization error and the finite-tail coupling are bounded
as in \eqref{eq:Neumann-correction}-\eqref{eq:onestep_resolved}.
If $\|K_\infty\|_\infty<1$ and $s_\infty>0$, with these quantities
defined in Appendix~\ref{app:infinite_linear_bounds}, the resulting constants satisfy
\[
 \|W_\infty\|\le\bm{W_\infty},\qquad
 \|V_{\star,\infty}\|\le\bm{V_{\star,\infty}}.
\]

\paragraph{The terminal radii polynomial.}
Fix the incoming endpoint $u_{\mathrm{in}}$ and denote the resulting
local map by $T_\infty^{u_{\mathrm{in}}}$, obtained by replacing
$\sigma_{M+M_a}^{(a)}(u)$ in \eqref{eq:T_infty} by $u_{\mathrm{in}}$.
Let $\|u_{\mathrm{in}}-\bu_*\|_\nu\le\rho_{M_a}$, where
$u_{\mathrm{in}}$ is the endpoint of the previously validated solution.
Using
$A_\infty\bu_*+\cQ G_*(\bu_*)=f(\bu_*)$ and the constant-function identity
\[
 \bu_*=e^{A_\infty(t-t_0)}\bu_*
 -\int_{t_0}^t e^{A_\infty(t-s)}A_\infty\bu_*\,ds,
\]
we obtain
\begin{align}\label{eq:Y_global_derivation}
 \bu_*-(T_\infty^{u_{\mathrm{in}}}(\bu_*))(t)
 &=e^{A_\infty(t-t_0)}(\bu_*-u_{\mathrm{in}})
 -\int_{t_0}^t e^{A_\infty(t-s)}f(\bu_*)\,ds.
\end{align}
Taking the $X_\infty$-norm gives the defect bound
\begin{align}\label{eq:Y_global}
 Y_\infty^{(M_a)}(\rho_{M_a})
 \bydef\bm{W_\infty}\rho_{M_a}+\bm{V_{0,\infty}}\|f(\bu_*)\|_\nu.
\end{align}
We use $\bm{V_{0,\infty}}$ because $f(\bu_*)$ already contains
the multiplier $\cQ$.

For the derivative, \eqref{eq:Zi_polynomial} gives
\[
 z_*(r)\bydef\sum_{p=2}^P p|c_p|
 \left((\|\bu_*\|_\nu+r)^{p-1}-\|\bu_*\|_\nu^{p-1}\right).
\]
Expansion about $\bu_*$ and the Banach-algebra inequality give
$\|DG_*(u)\|_{B(X_\infty)}\le z_*(r)$ on
$B_r(\bu_*)$. 
% Since $DG_*(\bu_*)=0$, differentiation of \eqref{eq:T_infty} gives
With the incoming endpoint held fixed, differentiation of
$T_\infty^{u_{\mathrm{in}}}$ with respect to its trajectory
argument gives

\begin{align}\label{eq:Z_global}
 Z_\infty(r)\bydef\bm{V_{q,\infty}}z_*(r).
\end{align}
Here $\bm{V_{q,\infty}}$ includes the factor $\cQ$, and
$Z_\infty(0)=0$.

The preceding integral bounds give continuous Fr\'echet
differentiability on a ball in $X_\infty$. By the one-component
case of the Radii Polynomial Theorem~\ref{thm:RadiiPolynomial},
it suffices to find $r_\infty>0$ such that
\begin{align}\label{eq:radii_infty}
 Y_\infty^{(M_a)}+\int_0^{r_\infty}Z_\infty(s)\,ds\le r_\infty,
 \qquad Z_\infty(r_\infty)<1.
\end{align}

For $\cN(u)=\pm u^3$, \eqref{eq:radii_infty} reads
\[
 Y_\infty^{(M_a)}+\bm{V_{q,\infty}}
 (3\|\bu_*\|_\nu r_\infty^2+r_\infty^3)\le r_\infty,
 \qquad 3\bm{V_{q,\infty}}r_\infty
 (2\|\bu_*\|_\nu+r_\infty)<1.
\]
The residual term in \eqref{eq:Y_global} can be sharpened by the
mode-resolved estimate \eqref{eq:residual_volterra_resolved} in
Appendix~\ref{app:global_residual_bounds}.

At the end of the finite-time validation, the distance to $\bu_*$
is bounded by
\[
 \rho_0\bydef r_M+\|\bu_*-\bu_M(\tau_M)\|_\nu.
\]
If this bound is too large to satisfy \eqref{eq:radii_infty}, we
continue the solution over additional finite intervals. Estimates for
choosing the number of steps are given in Appendix~\ref{app:global_predictors}.
The final radius is verified using the full nonlinear inequalities.

\subsection{Validated intermediate steps}\label{sec:aut_chain}
We now describe the additional time steps. Let $\bu_{M+k}$ be the
approximation on the $k$\,th added interval. Although the PDE is
autonomous, its linearization along $\bu_{M+k}$ generally depends
on time. We use the superscript $(a)$ for these steps. When the reference is constant, $\bu_{M+k}\equiv\bu_*$, the linearization is autonomous and we speak of autonomous steps; their linear bounds can then be computed once and reused (see Appendix~\ref{app:intermediate_endpoint_bounds}).
Choose positive lengths $h_{a,k}$ and set
\begin{align}\label{eq:aut_grid}
 \tau_{M+k}\bydef\tau_M+\sum_{\ell=1}^kh_{a,\ell},
 \qquad J_{M+k}\bydef[\tau_{M+k-1},\tau_{M+k}].
\end{align}
On each added interval use $X_{M+k}=\ell^1(C^0(J_{M+k}))$,
with the same weighted modewise supremum norm as in Section~\ref{sec:trajectory_validation}.
A fixed length $h_a$ is the special case $h_{a,k}=h_a$.
The approximation on each interval is a finite Chebyshev-Fourier
polynomial. Its full residual, including all Fourier modes created by the
nonlinearity, is evaluated in interval arithmetic.

As in Section~\ref{sec:multi-stepping}, define
\begin{align}\label{eq:G_a_def}
 G_{M+k}^{(a)}(v)(t)&\bydef\cN(v(t))-D\cN(\bu_{M+k}(t))v(t),
 &DG_{M+k}^{(a)}(\bu_{M+k})&=0.
\end{align}
With $U_{M+k}(t,s)$ generated by
$\cL+\cQ D\cN(\bu_{M+k}(t))$, the added maps are
\begin{align}\label{eq:T_aut_def}
 (T_{M+k}^{(a)}(u))(t)
 &\bydef U_{M+k}(t,\tau_{M+k-1})\sigma_{M+k-1}^{(a)}(u)
 +\int_{\tau_{M+k-1}}^tU_{M+k}(t,s)\cQ G_{M+k}^{(a)}(u_{M+k})(s)\,ds,\\
 \sigma_{M+k}^{(a)}(u)&\bydef(T_{M+k}^{(a)}(u))(\tau_{M+k}),
 \qquad\sigma_M^{(a)}\bydef\sigma_M.\label{eq:sigma_a_def}
\end{align}
For a fixed incoming endpoint $v_{\mathrm{in}}$, let
$T_{M+k}^{(a),v_{\mathrm{in}}}:X_{M+k}\to X_{M+k}$ denote
the local map obtained by replacing the preceding endpoint in
\eqref{eq:T_aut_def} by $v_{\mathrm{in}}$.
On each step, this value is the endpoint of the preceding
validated solution. The bounds are required to hold for every initial
value in its enclosure.

\paragraph{Local validation and endpoint matching.}
Let $\bu_j^{\mathrm{ept}}$ denote the stored numerical endpoint on
step $j$, and let $r_j^{\mathrm{ept}}$ enclose the true endpoint error
about it. Initialize the endpoint radius by
\[
 r_M^{\mathrm{ept}}\ge r_M+
 \|\bu_M(\tau_M)-\bu_M^{\mathrm{ept}}\|_\nu.
\]
An incoming enclosure $\mathcal E_{j-1}$ is any verified set containing
the preceding endpoint and satisfying
$\|v-\bu_{j-1}^{\mathrm{ept}}\|_\nu\le r_{j-1}^{\mathrm{ept}}$
for $v\in\mathcal E_{j-1}$. It may be the scalar ball or the affine enclosure from the appendix
intersected with this ball. The intersection is needed when the scalar
radius is smaller than the bound for the full affine set. The local
defect bound must hold uniformly on the chosen set. Choose a fixed intermediate block
$\bbm_a$ with $\mu_{\bbm_a}<0$.
Section~\ref{sec:one-step} supplies bounds for the propagation of
initial data and for the integral terms on each added interval, under
the same invertibility and positivity conditions. Let
$Y_{M+k}^{(a)}$ bound the local defect uniformly over the incoming
endpoint enclosure. As in \eqref{eq:Ym_bound}, it includes the incoming
error and mismatch and the full differential residual, propagated by
the integral operator without an additional factor $\cQ$.
For $\cN(u)=\pm u^3$, put $a_k\bydef\|\bu_{M+k}\|_{X_{M+k}}$.
The Radii Polynomial Theorem gives the local derivative bound and radii conditions
\begin{align}\label{eq:Z_aut}
 Z_{M+k}^{(a)}(r)&\bydef3\bm{V_{q,a,k}}r(2a_k+r),\\
 Y_{M+k}^{(a)}+\bm{V_{q,a,k}}(3a_kr_{M+k}^2+r_{M+k}^3)
 &\le r_{M+k},\qquad Z_{M+k}^{(a)}(r_{M+k})<1.
 \label{eq:radii_aut}
\end{align}
Both inequalities are verified in interval arithmetic.

For $j=M+k$, set
\[
 d_j\bydef\|\bu_j(\tau_{j-1})-\bu_{j-1}^{\mathrm{ept}}\|_\nu,
 \qquad e_j\bydef\|\bu_j(\tau_j)-\bu_j^{\mathrm{ept}}\|_\nu,
 \qquad F_j\bydef\bu_j'-f(\bu_j).
\]
Let $\bm{W_{a,k}^{\mathrm{ept}}}$ bound
$\|U_j(\tau_j,\tau_{j-1})\|_{B(\ell_\nu^1)}$, and let
$\bm{V_{0,a,k}}$ and $\bm{V_{q,a,k}}$ bound the integral operators
on $J_j$ without and with the factor $\cQ$, respectively.
These constants are obtained from the one-step estimates of
Section~\ref{sec:one-step}; their endpoint refinement is described
in Appendix~\ref{app:intermediate_endpoint_bounds}.

\begin{proposition}[{\bf Intermediate endpoint enclosure}]\label{prop:intermediate_enclosure}
Suppose $\cN(u)=\pm u^3$ and the local test \eqref{eq:radii_aut}
holds uniformly for initial values in $\mathcal E_{j-1}$, whose
distance to $\bu_{j-1}^{\mathrm{ept}}$ is bounded by
$r_{j-1}^{\mathrm{ept}}$.
Then the validated solution satisfies
\[
 \|\tilde u(\tau_j)-\bu_j^{\mathrm{ept}}\|_\nu
 \le r_j^{\mathrm{ept}},
\]
where a computable endpoint radius is
\begin{align}\label{eq:intermediate_endpoint_simple}
 r_j^{\mathrm{ept}}\bydef\min\Bigl\{
 &r_j+e_j,~
 \bm{W_{a,k}^{\mathrm{ept}}}(r_{j-1}^{\mathrm{ept}}+d_j)
 +\bm{V_{0,a,k}}\|F_j\|_{X_j}
 +\bm{V_{q,a,k}}(3a_k+r_j)r_j^2+e_j\Bigr\}.
\end{align}
\end{proposition}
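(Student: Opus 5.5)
Both entries of the minimum in \eqref{eq:intermediate_endpoint_simple} are valid upper bounds, so it suffices to prove them separately. Throughout, $\tilde u$ denotes the fixed point on $J_j$ produced by the one-component Radii Polynomial Theorem~\ref{thm:RadiiPolynomial} from \eqref{eq:radii_aut}. It satisfies $\|\tilde u-\bu_j\|_{X_j}\le r_j$ for the actual incoming value $v_{\mathrm{in}}\in\mathcal E_{j-1}$; this holds because the test is assumed uniformly on $\mathcal E_{j-1}$.

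\textbf{First entry.} Since $X_j\subset C(J_j,\ell_\nu^1)$ with $\|w(\tau_j)\|_\nu\le\|w\|_{X_j}$, we have $\|\tilde u(\tau_j)-\bu_j(\tau_j)\|_\nu\le r_j$. Adding $e_j$ by the triangle inequality gives $r_j+e_j$.

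\textbf{Second entry.} This is a sharper bound that uses the endpoint-only propagator. Write $\tilde u=T_j^{(a),v_{\mathrm{in}}}(\tilde u)$ at $t=\tau_j$. By the defect identity \eqref{eq:defect_recursion} specialised to one step, $\bu_j$ satisfies
\[
\bu_j(t)=U_j(t,\tau_{j-1})\bu_j(\tau_{j-1})+\int_{\tau_{j-1}}^tU_j(t,s)\bigl(\cQ G_j^{(a)}(\bu_j(s))+F_j(s)\bigr)ds .
\]
This holds because $F_j=\bu_j'-\cL\bu_j-\cQ\cN(\bu_j)$, and the linear part $\cQ D\cN(\bu_j)\bu_j$ is absorbed into the generator.

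Subtracting this identity from the fixed-point equation at $t=\tau_j$ gives
\[
\tilde u(\tau_j)-\bu_j(\tau_j)=U_j(\tau_j,\tau_{j-1})\bigl(v_{\mathrm{in}}-\bu_j(\tau_{j-1})\bigr)-\int U_j(\tau_j,s)F_j\,ds+\int U_j(\tau_j,s)\cQ\bigl(G_j^{(a)}(\tilde u)-G_j^{(a)}(\bu_j)\bigr)ds .
\]
We bound the three terms in turn.

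\begin{itemize}
\item \emph{Initial-value term.} Using $\bm{W_{a,k}^{\mathrm{ept}}}$ together with $\|v_{\mathrm{in}}-\bu_j(\tau_{j-1})\|_\nu\le r_{j-1}^{\mathrm{ept}}+d_j$ (triangle inequality through $\bu_{j-1}^{\mathrm{ept}}$), it is at most $\bm{W_{a,k}^{\mathrm{ept}}}(r_{j-1}^{\mathrm{ept}}+d_j)$.
\item \emph{Residual term.} It is bounded by $\bm{V_{0,a,k}}\|F_j\|_{X_j}$, using evaluation at $\tau_j$ of the $V_{0}$ operator.
\item \emph{Nonlinear term.} For the cubic case, put $w=\tilde u-\bu_j$. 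Then $G(\bu+w)-G(\bu)=\pm(3\bu\ast w\ast w+w^{\ast3})$, because the linear term in $w$ cancels by construction of $G$. The Banach algebra inequality in $X_j$ gives a norm of at most $(3a_k+r_j)r_j^2$, and applying $\bm{V_{q,a,k}}$ bounds the term.
\end{itemize}

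Finally, add $e_j$ to pass from $\bu_j(\tau_j)$ to $\bu_j^{\mathrm{ept}}$.

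\textbf{Main obstacle.} The delicate point is justifying that $\bm{V_{0,a,k}}$ and $\bm{V_{q,a,k}}$, which bound operators into $X_j$, also bound the endpoint evaluation of the integrals. This follows since evaluation at $\tau_j$ is norm-nonincreasing from $X_j$ to $\ell_\nu^1$, by Proposition~\ref{prop:one_step}. A second point to check is that the residual $F_j$ is enclosed including all tail modes, which is required for $\|F_j\|_{X_j}$ to be meaningful.
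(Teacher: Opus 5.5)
Your proposal is correct and follows the paper's proof. The first entry comes from the local ball plus $e_j$. The second comes from variation of constants for the error $w=\tilde u-\bu_j$, which is exactly what you get by subtracting the $\bu_j$ identity from the fixed-point equation. You then bound the incoming error by $r_{j-1}^{\mathrm{ept}}+d_j$, the cubic remainder $\pm(3\bu_j\ast w\ast w+w^{\ast 3})$ by $(3a_k+r_j)r_j^2$, and add $e_j$ at the end.

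One small correction: evaluation at $\tau_j$ does not increase norms because of the definition of $\|\cdot\|_{X_j}$ (the supremum of each coefficient is taken before summing), not because of Proposition~\ref{prop:one_step}.
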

\begin{proof}
The local ball gives $\|\tilde u(\tau_j)-\bu_j(\tau_j)\|_\nu\le r_j$,
which proves the first bound. For the second, set $w=\tilde u-\bu_j$.
Variation of constants for the error equation gives
\[
 w(\tau_j)=U_j(\tau_j,\tau_{j-1})w(\tau_{j-1})
 +\int_{\tau_{j-1}}^{\tau_j}U_j(\tau_j,s)
 \bigl(\cQ[\cN(\bu_j+w)-\cN(\bu_j)-D\cN(\bu_j)w]-F_j\bigr)(s)\,ds.
\]
The incoming error is at most $r_{j-1}^{\mathrm{ept}}+d_j$.
The Banach-algebra inequality bounds the cubic remainder by
$(3a_k+r_j)r_j^2$. Applying the stated linear bounds and adding
$e_j$ proves the second estimate.
\end{proof}

As in Section~\ref{sec:multistep}, taking scalar norms at each
step can lead to large bounds. Appendix~\ref{app:intermediate_endpoint_bounds}
gives the alternative radius \eqref{eq:intermediate_endpoint_radius},
which retains the signed finite propagator products. Either bound
can be used to enclose the endpoint.

The distance from this endpoint to $\bu_*$ is therefore bounded by
\begin{align}\label{eq:intermediate_matching}
 \rho_k\bydef r_{M+k}^{\mathrm{ept}}
             +\|\bu_*-\bu_{M+k}^{\mathrm{ept}}\|_\nu.
\end{align}
After each added step, we check \eqref{eq:radii_infty} with
$\rho_{M_a}=\rho_k$. We stop when these inequalities hold.

\subsection{Global enclosure}\label{sec:refined_cor}
We combine the finite-time solution, the added steps, and the solution
on $J_\infty$. For the first $M$ steps, assume the conditions of
the Radii Polynomial Theorem~\ref{thm:RadiiPolynomial}:
\begin{align}\label{eq:radii_finite}
 Y_m+\sum_{i=1}^m\int_0^{r_i}Z_m^i(s)\,ds\le r_m,
 \qquad\sum_{i=1}^mZ_m^i(r_i)\zeta_i<\zeta_m,
 \quad r_m>0,\quad\zeta_m>0.
\end{align}

\begin{corollary}[{\bf Global existence via validated intermediate steps}]\label{cor:global_existence}
 Assume $\cN(u)=\pm u^3$, verified one-step linear bounds, and the finite-time
 conditions~\eqref{eq:radii_finite}. Suppose every added step satisfies
 the uniform local conditions~\eqref{eq:radii_aut} for its previously
 verified incoming enclosure, and its endpoint enclosure is propagated
 as in Proposition~\ref{prop:intermediate_enclosure} and
 \eqref{eq:intermediate_matching}.
 Suppose the infinite-interval map satisfies~\eqref{eq:radii_infty} with the verified
 incoming bound $\rho_{M_a}$ and a radius $r_\infty>0$.
 Then the validated finite-time trajectory, the intermediate segments, and the infinite-interval solution join to a solution $\tilde u$ of~\eqref{eq:PDEs} for all $t\ge0$, with
\[
 \|\tilde u|_{J_m}-\bu_m\|_{X_m}\le r_m,
 \qquad m=1,\dots,M+M_a,
 \qquad
 \|\tilde u|_{J_\infty}-\bu_*\|_{X_\infty}\le r_\infty.
\]
 The solution is unique among trajectories satisfying these validation bounds. In particular,
\begin{align}\label{eq:global_enclosure}
 \|\tilde u(t)-\bu_*\|_\nu\le r_\infty,
 \qquad t\ge\tau_{M+M_a}.
\end{align}
% The solution is unique among trajectories satisfying these
% local validation bounds and
% $\|\tilde u|_{J_\infty}-\bu_*\|_{X_\infty}\le r_\infty$.
%  Then the combined map has a unique fixed point in
% \[
%  \prod_{m=1}^{M+M_a}B_{r_m}(\bu_m)\times B_{r_\infty}(\bu_*).
% \]
%  The components join to a solution of~\eqref{eq:PDEs} for all $t\ge0$, and
% \begin{align}\label{eq:global_enclosure}
%  \|\tilde u(t)-\bu_*\|_\nu\le r_\infty,
%  \qquad t\ge\tau_{M+M_a}.
% \end{align}

\end{corollary}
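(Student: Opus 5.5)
The plan is to build the global solution in three sequential stages and then check that the pieces join into one mild solution of \eqref{eq:PDEs}. First, \eqref{eq:radii_finite} and Theorem~\ref{thm:RadiiPolynomial} give a unique fixed point $(\tilde u_1,\dots,\tilde u_M)$ of $\bm T$ in $\prod_{m\le M}B_{r_m}(\bu_m)$. By Remark~\ref{rem:matching_traces} its components agree at the shared endpoints and start at $u_0$. Through the piecewise evolution operator \eqref{eq:piecewise_evolution} and its composition law, it is a mild solution on $[0,\tau_M]$ in the sense of \eqref{eq:T_single}. Its endpoint $\tilde u(\tau_M)=\sigma_M(\tilde u)$ satisfies $\|\tilde u(\tau_M)-\bu_M^{\mathrm{ept}}\|_\nu\le r_M+\|\bu_M(\tau_M)-\bu_M^{\mathrm{ept}}\|_\nu\le r_M^{\mathrm{ept}}$, so it lies in the incoming enclosure $\mathcal E_M$.

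Second, I will argue by induction on $k=1,\dots,M_a$. The incoming endpoint $\tilde u(\tau_{M+k-1})$ lies in $\mathcal E_{M+k-1}$, and \eqref{eq:radii_aut} holds uniformly on this set. Applying the one-component case of Theorem~\ref{thm:RadiiPolynomial} to $T_{M+k}^{(a),v_{\mathrm{in}}}$ with $v_{\mathrm{in}}=\tilde u(\tau_{M+k-1})$ then gives a unique fixed point $\tilde u_{M+k}\in B_{r_{M+k}}(\bu_{M+k})$. It starts at $v_{\mathrm{in}}$ and is a mild solution on $J_{M+k}$. Proposition~\ref{prop:intermediate_enclosure} then places $\tilde u(\tau_{M+k})$ within $r_{M+k}^{\mathrm{ept}}$ of $\bu_{M+k}^{\mathrm{ept}}$, which is the enclosure needed at step $k+1$. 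After the last step, the triangle inequality and \eqref{eq:intermediate_matching} give $\|\tilde u(t_0)-\bu_*\|_\nu\le\rho_{M_a}$. For $M_a=0$ this bound is $\rho_0$, again by the triangle inequality.

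Third, I fix $u_{\mathrm{in}}=\tilde u(t_0)$. The defect bound \eqref{eq:Y_global}, which follows from \eqref{eq:Y_global_derivation}, applies with this $\rho_{M_a}$, and \eqref{eq:Z_global} bounds the derivative. So \eqref{eq:radii_infty} and the one-component Radii Polynomial Theorem on $X_\infty$ give a unique fixed point $\tilde u_\infty\in B_{r_\infty}(\bu_*)$ of $T_\infty^{u_{\mathrm{in}}}$, with $\tilde u_\infty(t_0)=u_{\mathrm{in}}$. Concatenating all the pieces produces a continuous $\tilde u$ on $[0,\infty)$ that satisfies the variation of constants formula on each interval with matching initial values. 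On $J_\infty$ I must still pass from the linearization $A_\infty$ back to the original equation. Both $e^{A_\infty(t-s)}$ and each $U_m$ give mild formulations equivalent to that of $\dot u=\cL u+\cQ\cN(u)$ via $f(v)=A_\infty v+\cQ G_*(v)$, so their fixed points are solutions of \eqref{eq:PDEs}. The enclosures follow from the balls, and \eqref{eq:global_enclosure} follows from $\|\cdot\|_\nu\le$ the modewise supremum norm.

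I expect this equivalence of the different linearized mild formulations to be the main obstacle. It requires a standard perturbation argument: differentiating $s\mapsto U(t,s)v(s)$ for smooth truncations, then passing to the limit using the smoothing bounds for $\cQ$. Uniqueness among trajectories satisfying the validation bounds needs one more step. Any such trajectory solves the same local mild equations. Hence its first $M$ components form a fixed point of $\bm T$ in the product of balls, and each later stage is forced by uniqueness of that stage's fixed point with the now-determined incoming endpoint. This sequential uniqueness is exactly why each local test is required to hold uniformly over the incoming enclosure.
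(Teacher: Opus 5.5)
Your proposal is correct and follows the paper's sequential argument. The stages are the same: the Radii Polynomial Theorem on $[0,\tau_M]$; induction over the intermediate steps, using the uniform local test together with Proposition~\ref{prop:intermediate_enclosure} and \eqref{eq:intermediate_matching} to propagate the endpoint enclosure; the one-component test \eqref{eq:radii_infty} on $J_\infty$ with the incoming endpoint held fixed; and uniqueness obtained stage by stage. The one difference is that you explicitly identify fixed points of the differently linearized mild formulations with solutions of \eqref{eq:PDEs}, which the paper takes for granted.
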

\begin{proof}
The Radii Polynomial Theorem gives a unique solution on $[0,\tau_M]$
and an enclosure of its endpoint. If $M_a=0$, this endpoint is used
directly on $J_\infty$. Otherwise, use it as the initial value
on $J_{M+1}$. The uniform bounds in \eqref{eq:radii_aut} give a unique
fixed point on this interval, and Proposition~\ref{prop:intermediate_enclosure}
(or its refinement in the appendix) encloses its endpoint. Repeating
this argument gives the solution through $\tau_{M+M_a}$. The bound
\eqref{eq:intermediate_matching} and the inequalities
\eqref{eq:radii_infty} then give a unique fixed point on $J_\infty$.

At each interface, the initial value of the next segment is the endpoint of the preceding validated solution. The segments therefore join continuously. Uniqueness on the finite-time product space, followed by uniqueness for each fixed incoming endpoint in the intermediate and infinite steps, gives uniqueness in the stated class. The last segment belongs to $B_{r_\infty}(\bu_*)$, which proves \eqref{eq:global_enclosure}.
% 
% The endpoint identities in \eqref{eq:sigma_a_def} show that the
% components join continuously. Uniqueness on each successive interval
% gives uniqueness for the combined map. The last component belongs to
% $B_{r_\infty}(\bu_*)$, which proves \eqref{eq:global_enclosure}.
\end{proof}

An alternative validation based on a direct application of the Radii Polynomial Theorem to the entire product map is described in Appendix~\ref{app:global_product_argument}. The argument above does not require verification of those coupled product-space inequalities.

% A direct application of the Radii Polynomial Theorem to the entire
% product map is given in Appendix~\ref{app:global_product_argument}.

\section{Boundary value problems projected onto linear spaces} \label{sec:BVP}

We now consider a boundary value problem in which both endpoint values are unknown. The initial value is constrained to a finite-dimensional approximation of the unstable space of the trivial equilibrium, and the terminal value is constrained to an affine space centered at a numerical approximation $\hat u\in\ell_\nu^1$ of a nontrivial saddle, with directions approximating its stable space. We do not assume that $f(\hat u)=0$, where $f$ is defined in \eqref{eq:ODE-sequence-space}. We fix the time of flight $\tau>0$ and impose one scalar condition on the initial value. The resulting problem combines the IVP fixed-point map of Section~\ref{sec:fixed-point-operator} with equations for the endpoint parameters. Throughout this section, the trivial equilibrium has $n_u$ unstable directions and the target saddle has numerical Morse index $n_u-1$, as in Section~\ref{sec:OK_application}; the dimension count below relies on this assumption.

As in the IVP formulation, the derivative of the trajectory map vanishes at the numerical approximation. The linearized BVP therefore reduces to a system for the endpoint parameters. We use this system to bound the inverse, and then derive the bounds $Y$ and $Z$ for the contraction argument. For clarity, we present the construction with the single-step map \eqref{eq:T_single}; the multi-step version used in the computations is described in Remark~\ref{rem:BVP_multistep}.

\subsection{Endpoint conditions and the zero-finding problem}

Let $n_u$ be the number of unstable directions at the trivial equilibrium. We choose finite vectors $\bv_1^u,\dots,\bv_{n_u}^u$ representing these directions and extend them by zero outside $\FN$. Let $\varPi^{(\bN)}$ denote the projection onto the modes in $\FN$, that is, $(\varPi^{(\bN)}a)_{\bk}=a_{\bk}$ for $\bk\in\FN$ and $(\varPi^{(\bN)}a)_{\bk}=0$ otherwise. Writing the extensions as $v_i^u$, we define
\begin{align}
    V^u\bydef(v_1^u\;\cdots\;v_{n_u}^u)
    =\begin{pmatrix}
        \bv_1^u & \cdots & \bv_{n_u}^u\\
        0 & \cdots & 0\\[-1mm]
        \vdots & & \vdots
    \end{pmatrix}.
\end{align}
Since $\cN(0)=0$ and $D\cN(0)=0$, the linearization at the origin is $\cL$. Its unstable eigenvalues are precisely the positive multipliers $\mu_{\bk}$ introduced in Section~\ref{sec:functional_analytic_setup}. Relabeling them as $\lambda_i^u\bydef\mu_{\bk_i}>0$, the chosen vectors satisfy
\[
    \varPi^{(\bN)}\cL\varPi^{(\bN)}\bv_i^u\approx\lambda_i^u\bv_i^u,
    \qquad i=1,\dots,n_u.
\]
Because $\cL$ is diagonal, these directions can also be chosen as the corresponding canonical Fourier vectors. The left endpoint condition is
\[
    u(0)=P(\theta)\bydef V^u\theta,
    \qquad \theta\in\R^{n_u}.
\]

For the right endpoint, let $n$ denote the number of retained finite Fourier coordinates. We choose $n-n_u+1$ finite stable directions, as suggested by the numerical Morse index $n_u-1$ of the target saddle. Without a mean constraint, $n=N_1\cdots N_d=|\FN|$. If the zero Fourier mode is fixed, we remove it from the state variables and use $n=|\FN|-1$; the sequence spaces and projections in this section are then understood on the remaining coordinates. Throughout this section, the tail projection is $\varPi^{(\infty)}\bydef I-\varPi^{(\bN)}$, with the identity taken on the corresponding state space. We choose $n-n_u+1$ finite vectors $\bv_j^s$ approximating stable eigenvectors of the linearization at $\hat u$:
\[
    \varPi^{(\bN)}\bigl(\cL+\cQ D\cN(\hat u)\bigr)\varPi^{(\bN)}\bv_j^s
    \approx\lambda_j^s\bv_j^s,
    \qquad \lambda_j^s<0,
    \quad j=1,\dots,n-n_u+1.
\]
Fix an ordering of the Fourier modes in which the $n$ retained finite modes come first, and denote the canonical vectors in this ordering by $e_j$. The right endpoint parameters consist of $n-n_u+1$ finite coefficients and all tail coefficients. To avoid redundant finite coordinates, we use the closed parameter subspace
\[
    X^s\bydef\left\{\phi\in\ell_\nu^1:
    \phi_j=0\ \text{for }n-n_u+2\le j\le n\right\},
\]
with the norm inherited from $\ell_\nu^1$. If $n_u=1$, no coordinates are removed. Extending each $\bv_j^s$ by zero, we define the bounded map $V^s:X^s\to\ell_\nu^1$ by
\begin{align}
    V^s\phi\bydef\sum_{j=1}^{n-n_u+1}v_j^s\phi_j+\varPi^{(\infty)}\phi,
    \qquad
    Q(\phi)\bydef\hat u+V^s\phi.
\end{align}
Thus the right endpoint condition is $u(\tau)=Q(\phi)$ for $\phi\in X^s$. In matrix form, with the unused finite parameter coordinates omitted,
\[
    V^s=(v_1^s\;\cdots\;v_{n-n_u+1}^s\;e_{n+1}\;e_{n+2}\;\cdots).
\]
The terminal tail coefficients are free, while the retained finite coefficients satisfy the prescribed stable-space condition.

\begin{remark}
    The maps $P$ and $Q$ are fixed linear and affine maps, respectively. We write
    $\mathbb{E}^u(0)\bydef\mathrm{Range}(V^u)$ and
    $\mathbb{E}^s(\hat u)\bydef\mathrm{Range}(V^s)$ for the spaces used in the endpoint conditions. The center $\hat u$ and the vectors are computed numerically. No separate validation of the saddle or its stable space is needed for the projected BVP: the center and the endpoint maps are fixed, and their matching residual is included in $Y$. The theorem below proves the existence of a solution segment satisfying these projected endpoint conditions.
\end{remark}

We use the trajectory space $X=\ell^1(C^0([0,\tau]))$ defined in Section~\ref{sec:function-space}, with its norm $\|\cdot\|_X$. In particular, endpoint evaluation satisfies $\|u(\tau)\|_\nu\le\|u\|_X$. Define
\begin{align}\label{eq:X_BVP}
    X_{\mathtt{BVP}}\bydef\R^{n_u}\times X^s\times X,
    \qquad
    \| (\theta,\phi,u)\|_{X_{\mathtt{BVP}}}
    \bydef\max\{\|\theta\|_1,\|\phi\|_\nu,\|u\|_X\}.
\end{align}
For $x=(\theta,\phi,u)\in X_{\mathtt{BVP}}$, we write
\[
    \llbracket x\rrbracket\bydef
    \begin{pmatrix}\|\theta\|_1\\\|\phi\|_\nu\\\|u\|_X\end{pmatrix}\in\R_+^3
\]
for the vector of its component norms, and $\llbracket x\rrbracket_i$, $i\in\{\theta,\phi,u\}$, for its components. Inequalities between such vectors are understood componentwise. The endpoint parameters and the trajectory are all unknown. We seek a zero of
$F:\R^{n_u}\times X^s\times X\to\R\times\ell_\nu^1\times X$ given by
\begin{equation}\label{eq:zero_finding_BVP}
    F(\theta,\phi,u)\bydef
    \begin{pmatrix}
        l(\theta)-\delta^2\\
        u(\tau)-Q(\phi)\\
        u-T(u,P(\theta))
    \end{pmatrix},
\end{equation}
where $T(u,u_0)$ denotes the map \eqref{eq:T_single} with its initial condition displayed as a second argument. The first component imposes the amplitude condition
\begin{align}\label{eq:phase_condition}
    l(\theta)\bydef\|\theta\|_2^2=\theta^\top\theta.
\end{align}
Thus $l(\theta)=\delta^2$ fixes the initial parameter on a sphere of radius $\delta>0$. Its derivative is $l'(\theta)=2\theta^\top$. The computation in Section~\ref{sec:OK_application} uses $\delta=0.01$. Together with the $n$ finite endpoint matching equations, this scalar condition gives $n+1$ equations for $n_u+(n-n_u+1)=n+1$ finite endpoint parameters. The number of equations thus agrees with the number of finite parameters. We verify invertibility of the linearized system below.

Let $\bar x\bydef(\bar\theta,\bar\phi,\bu)$ be a numerical approximation satisfying $F(\bar x)\approx0$. For a radius vector $r=(r^\theta,r^\phi,r^u)>0$, define the closed product ball
\[
    B_r(\bar x)\bydef\left\{(\theta,\phi,u)\in X_{\mathtt{BVP}}:
    \|\theta-\bar\theta\|_1\le r^\theta,\ 
    \|\phi-\bar\phi\|_\nu\le r^\phi,\ 
    \|u-\bu\|_X\le r^u\right\}.
\]

\subsection{Reduction of the linearized boundary value problem}

The evolution operator $U(t,s)$ in $T$ is generated by the linearization along $\bu$. Consequently $D_uT(\bu,P(\bar\theta))=0$, since $DG(\bu)=0$ in \eqref{eq:G_def}. If $E_\tau u\bydef u(\tau)$ denotes endpoint evaluation, then
\[
    DF(\bar x)=
    \begin{pmatrix}
        l'(\bar\theta)&0&0\\
        0&-D_\phi Q(\bar\phi)&E_\tau\\
        -U(\cdot,0)D_\theta P(\bar\theta)&0&I
    \end{pmatrix}.
\]
To bound its inverse, let $\mathscr G=(g^\theta,g^\phi,g^u)^\top\in\R\times\ell_\nu^1\times X$ and consider $DF(\bar x)h=\mathscr G$, with $h=(h^\theta,h^\phi,h^u)\in X_{\mathtt{BVP}}$. The component equations are
\begin{align}\label{eq:LinearizedProb_BVP}
    \begin{cases}
        l'(\bar\theta)h^\theta=g^\theta,\\
        -D_\phi Q(\bar\phi)h^\phi+h^u(\tau)=g^\phi,\\
        -U(t,0)D_\theta P(\bar\theta)h^\theta+h^u(t)=g^u(t).
    \end{cases}
\end{align}
The last equation gives
\[
    h^u(t)=U(t,0)V^u h^\theta+g^u(t).
\]
Substituting at $t=\tau$ in the second equation gives
\[
    B^\dagger\begin{pmatrix}h^\theta\\h^\phi\end{pmatrix}
    =\begin{pmatrix}g^\theta\\g^\phi-g^u(\tau)\end{pmatrix},
    \qquad
    B^\dagger\bydef
    \begin{pmatrix}
        l'(\bar\theta)&0\\
        U(\tau,0)V^u&-V^s
    \end{pmatrix}.
\]
Here $B^\dagger:\R^{n_u}\times X^s\to\R\times\ell_\nu^1$ is the reduced endpoint operator. The superscript $\dagger$ is part of its notation and does not denote an adjoint.

We construct its approximate inverse from the finite Fourier projection. Define
\begin{align}\label{eq:Bbardag}
    \bar B^\dagger\bydef
    \begin{pmatrix}
        l'(\bar\theta)&0\\
        \varPi^{(\bN)}U(\tau,0)V^u&
        -\varPi^{(\bN)}V^s\varPi^{(\bN)}-\varPi^{(\infty)}
    \end{pmatrix}.
\end{align}
The projections on the parameter factor are restricted to $X^s$. Since
$V^s=\varPi^{(\bN)}V^s\varPi^{(\bN)}+\varPi^{(\infty)}$, we have
\[
    B^\dagger-\bar B^\dagger
    =\begin{pmatrix}
        0&0\\
        \varPi^{(\infty)}U(\tau,0)V^u&0
    \end{pmatrix}.
\]
The approximation therefore omits only the tail of $U(\tau,0)V^u$.

On the finite block, we compute a numerical inverse of
\begin{align}\label{eq:Bdag_finite}
    \bar B^\dagger_{\mathrm{fin}}\bydef
    \begin{pmatrix}
        l'(\bar\theta)&0\\
        \varPi^{(\bN)}U(\tau,0)V^u&-\varPi^{(\bN)}V^s\varPi^{(\bN)}
    \end{pmatrix},
\end{align}
where the zero parameter coordinates are omitted. This matrix has size $(n+1)\times(n+1)$. We extend the resulting finite matrix by minus the identity on the tail to obtain
$B:\R\times\ell_\nu^1\to\R^{n_u}\times X^s$. We bound the residual of this numerical inverse using interval arithmetic.

Let $G\bydef I-BB^\dagger$ be the residual of this approximate inverse. For a block operator $A$, we denote by $\mathfrak b(A)$ the nonnegative matrix of its block operator norms, as in \eqref{eq:block_norm_matrix}. In the present case,
\[
    \mathfrak b(G)\bydef
    \begin{pmatrix}
        \|G_{11}\|_{B(\R^{n_u})}&\|G_{12}\|_{B(X^s,\R^{n_u})}\\
        \|G_{21}\|_{B(\R^{n_u},X^s)}&\|G_{22}\|_{B(X^s)}
    \end{pmatrix},
    \qquad
    \mathfrak b(B)\bydef
    \begin{pmatrix}
        \|B_{11}\|_1&\|B_{12}\|_{B(\ell_\nu^1,\R^{n_u})}\\
        \|B_{21}\|_\nu&\|B_{22}\|_{B(\ell_\nu^1,X^s)}
    \end{pmatrix}.
\]
All norms on $\R^{n_u}$ are induced by $\|\cdot\|_1$. For the reduced system, we use the sum of the two component norms. The induced operator norm satisfies $\|G\|_{\mathrm{op}}\le\|\mathfrak b(G)\|_1$. We estimate the block bounds for the finite residual and the propagation tail separately:
\begin{align*}
    \bigl\|\mathfrak b(I-B\bar B^\dagger)\bigr\|_1&\le Z_0,\\
    \bigl\|\mathfrak b\bigl(B(\bar B^\dagger-B^\dagger)\bigr)\bigr\|_1&\le Z_1.
\end{align*}
If $Z_0+Z_1<1$, then $\|\mathfrak b(G)\|_1<1$, and the Neumann series makes $BB^\dagger=I-G$ invertible. Thus $B$ is surjective. Since its finite block is square and its tail is minus the identity, $B$ is invertible. It follows that
\[
    (B^\dagger)^{-1}=(I-G)^{-1}B.
\]
The same argument gives the componentwise estimate
\[
    \begin{pmatrix}\|h^\theta\|_1\\\|h^\phi\|_\nu\end{pmatrix}
    \le (I-\mathfrak b(G))^{-1}\mathfrak b(B)
    \begin{pmatrix}|g^\theta|\\\|g^\phi\|_\nu+\|g^u(\tau)\|_\nu\end{pmatrix}.
\]

Write $(I-\mathfrak b(G))^{-1}\mathfrak b(B)=(M_{ab})_{a,b=1,2}$ and define
\begin{align}\label{eq:Wtheta_phi_constants}
    \begin{aligned}
        &W_1^\theta\bydef M_{11},&&W_1^\phi\bydef M_{12},&&W_1^u\bydef M_{12},\\
        &W_2^\theta\bydef M_{21},&&W_2^\phi\bydef M_{22},&&W_2^u\bydef M_{22}.
    \end{aligned}
\end{align}
To bound the trajectory correction, we also define
\begin{align}\label{eq:U_DP}
    U_{DP}\bydef\|U(\cdot,0)D_\theta P(\bar\theta)\|_{B(\R^{n_u},X)}
    =\|U(\cdot,0)V^u\|_{B(\R^{n_u},X)}.
\end{align}
Each column $U(\cdot,0)v_i^u$ solves \eqref{eq:model_linear_problem} with zero forcing and initial condition $v_i^u$. We bound these columns using $\bm{W^{(J_m,\tau_j)}}$ in \eqref{eq:WVbounds}. For the single-interval norm on $X$, the bounds over the individual steps are summed, since $\|w\|_X\le\sum_m\|w|_{J_m}\|_{X_m}$. In the product space of Remark~\ref{rem:BVP_multistep}, the maximum over the steps is taken instead. Set
\begin{align}\label{eq:Wu_constants}
    W_3^\bullet\bydef U_{DP}W_1^\bullet,
    \qquad \bullet\in\{\theta,\phi,u\}.
\end{align}
The last equation of \eqref{eq:LinearizedProb_BVP} then gives the inverse bound
\begin{align*}
    \llbracket DF(\bar x)^{-1}\mathscr G\rrbracket
    &=\begin{pmatrix}\|h^\theta\|_1\\\|h^\phi\|_\nu\\\|h^u\|_X\end{pmatrix}\\
    &\le
    \begin{pmatrix}
        W_1^\theta|g^\theta|+W_1^\phi\|g^\phi\|_\nu+W_1^u\|g^u(\tau)\|_\nu\\
        W_2^\theta|g^\theta|+W_2^\phi\|g^\phi\|_\nu+W_2^u\|g^u(\tau)\|_\nu\\
        W_3^\theta|g^\theta|+W_3^\phi\|g^\phi\|_\nu+W_3^u\|g^u(\tau)\|_\nu+\|g^u\|_X
    \end{pmatrix}.
\end{align*}
This bounds the inverse of $DF(\bar x)$ in terms of the reduced inverse and the evolution operator.

We use the resulting exact inverse to define the Newton-like map
\begin{align}\label{eq:T_BVP}
    T_{\mathtt{BVP}}(x)\bydef x-DF(\bar x)^{-1}F(x).
\end{align}
Its fixed points are the zeros of $F$. The bounds above suffice to estimate its defect and derivative without computing the exact inverse.

\subsection{\texorpdfstring{The bound \boldmath $Y$\boldmath}{The bound Y}}\label{sec:BVP_Y}

Write the three components of $F(\bar x)$ as
\begin{align*}
    g^\theta&\bydef l(\bar\theta)-\delta^2,\\
    g^\phi&\bydef\bu(\tau)-Q(\bar\phi),\\
    g^u&\bydef\bu-T(\bu,P(\bar\theta)).
\end{align*}
We bound $g^u$ as in Section~\ref{sec:multi-stepping}, with initial condition $P(\bar\theta)$. A separate estimate of $g^u(\tau)$ gives a sharper endpoint bound. Since
$T_{\mathtt{BVP}}(\bar x)-\bar x=-DF(\bar x)^{-1}F(\bar x)$, the inverse bound yields
\[
    \llbracket T_{\mathtt{BVP}}(\bar x)-\bar x\rrbracket\le
    \begin{pmatrix}
        W_1^\theta|g^\theta|+W_1^\phi\|g^\phi\|_\nu+W_1^u\|g^u(\tau)\|_\nu\\
        W_2^\theta|g^\theta|+W_2^\phi\|g^\phi\|_\nu+W_2^u\|g^u(\tau)\|_\nu\\
        W_3^\theta|g^\theta|+W_3^\phi\|g^\phi\|_\nu+W_3^u\|g^u(\tau)\|_\nu+\|g^u\|_X
    \end{pmatrix}
    \bydef Y.
\]

\subsection{\texorpdfstring{The bound \boldmath $Z$\boldmath}{The bound Z}}\label{sec:BVP_Z}

For $x\in B_r(\bar x)$ and $\|h\|_{X_{\mathtt{BVP}}}\le1$, differentiating \eqref{eq:T_BVP} gives
\begin{align*}
    DT_{\mathtt{BVP}}(x)h
    &=DF(\bar x)^{-1}\bigl(DF(\bar x)-DF(x)\bigr)h\\
    &=DF(\bar x)^{-1}
    \begin{pmatrix}
        2(\bar\theta-\theta)^\top h^\theta\\
        0\\
        \bigl(D_uT(u,P(\theta))-D_uT(\bu,P(\bar\theta))\bigr)h^u
    \end{pmatrix}.
\end{align*}
Since $Q$ is affine and $DP(\theta)=V^u$ is constant, only the amplitude condition and the trajectory map contribute to this difference. For the amplitude condition,
\[
    \bigl|2(\bar\theta-\theta)^\top h^\theta\bigr|
    \le2r^\theta.
\]

Let $\tilde Z$ be a non-decreasing bound for the derivative of the trajectory map:
\begin{align}\label{eq:Ztilde_BVP}
    \|D_uT(u,P(\theta))-D_uT(\bu,P(\bar\theta))\|_{B(X)}
    \le\tilde Z(r^u),
    \qquad (\theta,\phi,u)\in B_r(\bar x).
\end{align}
The estimates of Section~\ref{sec:multi-stepping} provide such a bound. Endpoint evaluation has norm at most one, so the same bound applies at $t=\tau$. The inverse estimate then gives
\begin{align}\label{eq:Z_BVP}
    \llbracket DT_{\mathtt{BVP}}(x)h\rrbracket
    \le
    \begin{pmatrix}
        2W_1^\theta r^\theta+W_1^u\tilde Z(r^u)\\
        2W_2^\theta r^\theta+W_2^u\tilde Z(r^u)\\
        2W_3^\theta r^\theta+(W_3^u+1)\tilde Z(r^u)
    \end{pmatrix}
    \bydef Z(r).
\end{align}
The terminal parametrization is affine, so $Z(r)$ is independent of $r^\phi$. The terms in $r^\theta$ come from the amplitude condition.

In the multi-step setting, equip $\bm X=\prod_{m=1}^M X_m$ with the maximum of the step norms and use the same trajectory radius $r^u$ on every step. A bound for the full trajectory derivative is then
\[
    \tilde Z(r^u)\bydef\max_{1\le m\le M}\sum_{i=1}^m Z_m^i(r^u),
\]
where $Z_m^i$ are defined in \eqref{eq:Zmi_bound}. Taking the maximum over the row sums bounds the derivative on the whole product space. These bounds use the same propagation estimates $\bm{V_q^{(J_m,\tau_j)}}$ as in the IVP.

\subsection{Existence theorem for the projected boundary value problem}\label{sec:BVP_existence}

We now apply the contraction argument used in the Radii Polynomial Theorem~\ref{thm:RadiiPolynomial} to the projected boundary value problem. Since $Z(r)$ in \eqref{eq:Z_BVP} bounds the derivative on the whole ball, the conditions below use $Y^i+Z^i(r)\max_jr^j$ in place of the integrated form of Theorem~\ref{thm:RadiiPolynomial}.

\begin{theorem}\label{thm:BVP_existence}
    Let $\bar x=(\bar\theta,\bar\phi,\bu)$ be a numerical approximation to a zero of $F$ in \eqref{eq:zero_finding_BVP}. Let $B$ be the approximate inverse constructed from \eqref{eq:Bdag_finite}, and assume that
    \[
        \bigl\|\mathfrak b(I-BB^\dagger)\bigr\|_1\le Z_0+Z_1<1.
    \]
    Let $Y=(Y^\theta,Y^\phi,Y^u)$ and $Z(r)=(Z^\theta(r),Z^\phi(r),Z^u(r))$ be the componentwise bounds of Sections~\ref{sec:BVP_Y} and~\ref{sec:BVP_Z}, and set $\|Z(r)\|\bydef\max_i Z^i(r)$. Suppose that there is a radius vector $r=(r^\theta,r^\phi,r^u)>0$ for which
    \begin{align}\label{eq:radii_BVP}
        Y^i+Z^i(r)\max_{j\in\{\theta,\phi,u\}}r^j\le r^i
        \quad(i\in\{\theta,\phi,u\}),
        \qquad \|Z(r)\|<1.
    \end{align}
    Then $F$ has a unique zero $\tilde x=(\tilde\theta,\tilde\phi,\tilde u)$ in $B_r(\bar x)$. In particular, $\tilde u$ is a solution of \eqref{eq:PDEs} on $[0,\tau]$ satisfying
    \[
        \tilde u(0)=P(\tilde\theta)\in\mathbb{E}^u(0),
        \qquad
        \tilde u(\tau)=Q(\tilde\phi)\in\hat u+\mathbb{E}^s(\hat u),
        \qquad l(\tilde\theta)=\delta^2,
    \]
    where $\mathbb{E}^u(0)=\mathrm{Range}(V^u)$ and $\mathbb{E}^s(\hat u)=\mathrm{Range}(V^s)$ are the prescribed projected endpoint spaces.
\end{theorem}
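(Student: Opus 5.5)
The plan is to apply the contraction mapping theorem to the Newton-like map $T_{\mathtt{BVP}}$ in \eqref{eq:T_BVP} on the product ball $B_r(\bar x)$. First, I will justify that $T_{\mathtt{BVP}}$ is well defined. The hypothesis $\|\mathfrak b(I-BB^\dagger)\|_1\le Z_0+Z_1<1$ gives, via the Neumann series, invertibility of $I-G$. Because $B$ is a square finite matrix extended by minus the identity on the tail, surjectivity of $B$ implies invertibility of $B$, and hence of $B^\dagger$ with $(B^\dagger)^{-1}=(I-G)^{-1}B$. The elimination of $h^u$ in \eqref{eq:LinearizedProb_BVP} then produces an explicit bounded inverse $DF(\bar x)^{-1}$: solve the reduced endpoint system for $(h^\theta,h^\phi)$, then set $h^u=U(\cdot,0)V^uh^\theta+g^u$. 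This inverse satisfies the componentwise bound displayed before \eqref{eq:T_BVP}. It follows that zeros of $F$ are exactly the fixed points of $T_{\mathtt{BVP}}$, and that $T_{\mathtt{BVP}}$ is $C^1$ on $X_{\mathtt{BVP}}$, since $F$ is, using the polynomial nonlinearity and the bounded integral operators of Section~\ref{sec:multi-stepping}.

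Next I will prove self-mapping. For $x\in B_r(\bar x)$, write
\[
T_{\mathtt{BVP}}(x)-\bar x=\bigl(T_{\mathtt{BVP}}(\bar x)-\bar x\bigr)+\int_0^1 DT_{\mathtt{BVP}}(\bar x+s(x-\bar x))(x-\bar x)\,ds .
\]
Take component norms $\llbracket\cdot\rrbracket$. The first term is bounded by $Y$ from Section~\ref{sec:BVP_Y}. For the second term, the segment stays in the convex ball $B_r(\bar x)$, and $\|x-\bar x\|_{X_{\mathtt{BVP}}}\le\max_j r^j$. By linearity and \eqref{eq:Z_BVP} applied to the normalized direction, the $i$th component is at most $Z^i(r)\max_j r^j$. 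Here I use that $Z$ is non-decreasing in $r$, which holds because $\tilde Z$ is non-decreasing. Condition \eqref{eq:radii_BVP} then gives $\llbracket T_{\mathtt{BVP}}(x)-\bar x\rrbracket\le r$, so $T_{\mathtt{BVP}}$ maps $B_r(\bar x)$ into itself.

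For contraction, the same estimate applied to $T_{\mathtt{BVP}}(x)-T_{\mathtt{BVP}}(y)$ for $x,y\in B_r(\bar x)$ gives, in the max norm on $X_{\mathtt{BVP}}$,
\[
\|T_{\mathtt{BVP}}(x)-T_{\mathtt{BVP}}(y)\|\le\|Z(r)\|\,\|x-y\|,
\]
and $\|Z(r)\|<1$ by \eqref{eq:radii_BVP}. The ball is closed in a Banach space, so Banach's fixed point theorem gives a unique fixed point $\tilde x$, hence a unique zero of $F$ in $B_r(\bar x)$.

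Finally I will read off the components of $F(\tilde x)=0$. The third component says $\tilde u=T(\tilde u,P(\tilde\theta))$, so $\tilde u$ is a mild solution of \eqref{eq:PDEs} with $\tilde u(0)=P(\tilde\theta)$. The second component gives $\tilde u(\tau)=Q(\tilde\phi)$, and the first gives $l(\tilde\theta)=\delta^2$. In the multistep version, Remark~\ref{rem:matching_traces} ensures continuity across steps.

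The main obstacle is verifying that \eqref{eq:Z_BVP} really bounds $DT_{\mathtt{BVP}}$ uniformly on the ball. This requires two facts: that $\tilde Z(r^u)$ controls $D_uT$ uniformly in $\theta$, and that endpoint evaluation is a contraction from $X$ to $\ell^1_\nu$. The uniformity in $\theta$ should hold because $T$ is affine in $u_0$, so $D_uT$ does not depend on $\theta$. The endpoint bound follows from $\|u(\tau)\|_\nu\le\|u\|_X$.
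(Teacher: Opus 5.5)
Your proposal is correct and follows essentially the same route as the paper. You obtain a bounded inverse of $DF(\bar x)$ through the reduced endpoint operator and the Neumann series. You then show self-mapping and contraction of $T_{\mathtt{BVP}}$ on the convex product ball $B_r(\bar x)$ via the componentwise mean value estimate with $Y$ and $Z(r)$, and you read off the three components of $F(\tilde x)=0$. Your appeal to monotonicity of $Z$ is unnecessary, since \eqref{eq:Z_BVP} already holds with the fixed radius $r$ at every point of $B_r(\bar x)$.
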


\begin{proof}
    The reduced inverse construction and \eqref{eq:LinearizedProb_BVP} give a bounded inverse of $DF(\bar x)$. Therefore the fixed points of $T_{\mathtt{BVP}}$ in \eqref{eq:T_BVP} coincide with the zeros of $F$. The estimates of Sections~\ref{sec:BVP_Y} and~\ref{sec:BVP_Z} give
    \[
        \llbracket T_{\mathtt{BVP}}(\bar x)-\bar x\rrbracket\le Y,
        \qquad
        \llbracket DT_{\mathtt{BVP}}(x)h\rrbracket\le Z(r)
    \]
    for $x\in B_r(\bar x)$ and $\|h\|_{X_{\mathtt{BVP}}}\le1$.
    Since $B_r(\bar x)$ is convex, the mean value inequality yields, componentwise,
    \[
        \llbracket T_{\mathtt{BVP}}(x)-\bar x\rrbracket_i
        \le Y^i+Z^i(r)\|x-\bar x\|_{X_{\mathtt{BVP}}}
        \le Y^i+Z^i(r)\max_j r^j
        \le r^i.
    \]
    Hence $T_{\mathtt{BVP}}$ maps the closed product ball into itself. The same inequality applied to $x,y\in B_r(\bar x)$ gives
    \[
        \|T_{\mathtt{BVP}}(x)-T_{\mathtt{BVP}}(y)\|_{X_{\mathtt{BVP}}}
        \le\|Z(r)\|\,\|x-y\|_{X_{\mathtt{BVP}}}.
    \]
    Thus $T_{\mathtt{BVP}}$ is a contraction on the complete metric space $B_r(\bar x)$ and has a unique fixed point there. The third component of $F(\tilde x)=0$ gives the PDE and its initial value, the second gives the terminal condition, and the first gives the amplitude condition.
\end{proof}

\begin{remark}[{\bf Multi-step implementation}]\label{rem:BVP_multistep}
    For a longer time of flight, we replace $T$ by the multi-step map $\bm T=(T_m)_{m=1}^M$ of \eqref{eq:fixed-point-form} and $X$ by the corresponding product space. The trajectory defect is bounded on every step, and the full derivative is controlled by the maximum of the row sums described above. The inverse reduction is unchanged: its endpoint block uses
    \[
        U(\tau,0)V^u
        =U_M(\tau_M,\tau_{M-1})\cdots U_1(\tau_1,\tau_0)V^u,
    \]
    together with $V^s$. We bound these products and $U_{DP}$ using Proposition~\ref{prop:multistep_bounds}, in particular \eqref{eq:SR_split}. For the product trajectory norm, we take the maximum of the bounds on the individual steps. The finite endpoint matrix \eqref{eq:Bdag_finite} is the only matrix that must be inverted numerically.
\end{remark}

\section{Application to the 2D Ohta--Kawasaki equation}\label{sec:OK_application}

We now apply the preceding estimates to the three trajectories shown in Figure~\ref{fig:dynamics_overview}. All three start near the same unstable (zero) equilibrium, but they describe different dynamics: a global solution numerically approaching a stripe pattern, a finite orbit segment satisfying projected boundary conditions at a saddle, and a global solution numerically approaching a spot pattern. We present the three computer-assisted results in this order. The first and third use the fixed-point argument on an unbounded interval from Section~\ref{sec:global_in_time}; the second uses the projected BVP formulation of Section~\ref{sec:BVP}.

The equation is
\begin{align}\label{eq:OK_equation}
 u_t=-\Delta\left(\varepsilon^2\Delta u+u-u^3\right)-\sigma(u-\bar m),
\end{align}
posed on $\Omega=[0,\pi/L_1]\times[0,\pi/L_2]$ with the boundary conditions induced by even periodic extension. Here $\varepsilon>0$ is the interfacial parameter, $\sigma\ge0$ is the long-range interaction strength, and $\bar m$ is the prescribed mean, which is preserved when the initial datum has that mean. The equation was introduced as a model for microphase separation in diblock copolymers~\cite{Ohta-Kawasaki}. Computer-assisted studies of its equilibria and bifurcation diagrams include~\cite{MR3636312,MR3904424,MR4159300,MR4191556}; a proof of connecting orbits in one space dimension is given in~\cite{MR3773757}.

{
The two-dimensional phase diagram for energy minimizers is studied
numerically and asymptotically in \cite{ChoksiMarasWilliams2011}.
Branch interactions and their influence on long-term pattern selection
in one dimension are investigated in \cite{JohnsonSanderWanner2013}.
These studies provide context for the stripe and spot patterns considered here.
}

Throughout this section, we fix
\begin{align}\label{eq:OK_params}
 \sigma=1,\qquad \bar m=0,\qquad \bL=(L_1,L_2)=(1,1.1),\qquad \varepsilon=0.39.
\end{align}
We use the cosine expansion~\eqref{eq:cosine-expansion} and the sequence-space notation of Section~\ref{sec:functional_analytic_setup}, with $\nu=1$. In particular, $u$ denotes both the solution and its Fourier coefficients, and the zero coefficient is fixed at zero. The equation has the form~\eqref{eq:ODE-sequence-space}, with
\begin{align}\label{eq:OK_mu}
 \mu_{\bk}=-\sigma+(\kk\bL)^2-\varepsilon^2(\kk\bL)^4,
 \qquad \cN(u)=u\ast u\ast u,\qquad q=1.
\end{align}
Thus the cubic term is acted on by the Laplacian multiplier $\cQ$ defined in~\eqref{eq:def_Q}. No additional normalization of the Fourier coefficients is introduced.

All computations in this section were performed on a Windows 10 machine equipped with an Intel Core i9-10900K CPU running at 3.70\,GHz, using MATLAB R2024b and INTLAB version~13 \cite{Ru99a} for interval arithmetic. The code is partially implemented with the generative AI (ChatGPT 6.1 sol) and is publicly available under the licensing terms described in the repository \cite{bib:codes}.

\subsection{Equilibria and the three trajectories}\label{sec:OK_linear}

The signs of $\mu_{\bk}$ determine the linear stability of the origin. For $(\kk\bL)^2>\sigma$, the mode $\bk$ changes stability at
\[
 \varepsilon_{\bk}^2=\frac{(\kk\bL)^2-\sigma}{(\kk\bL)^4}.
\]
The four modes relevant at the parameter value~\eqref{eq:OK_params} have critical values
\begin{align}\label{eq:OK_bifs}
 \varepsilon_{(1,1)}\approx0.497738,\quad
 \varepsilon_{(2,0)}\approx0.433013,\quad
 \varepsilon_{(0,2)}\approx0.404874,\quad
 \varepsilon_{(2,1)}\approx0.393825.
\end{align}
At $\varepsilon=0.39$, these are exactly the modes with $\mu_{\bk}>0$, so the origin has Morse index four.

The numerical bifurcation diagram in Figure~\ref{fig:bifurcation_diagram} identifies three target patterns: the stable stripe $u_*^{(2,0)}$, the saddle $u_3$ with dominant mode $(2,1)$ and Morse index three, and the stable spot $u_*^{(1,1)}$. The stripe and spot approximations have $\ell^1_\nu$ norms approximately $0.441$ and $0.737$, respectively. We write $\bu_*^{(2,0)}$ and $\bu_*^{(1,1)}$ for the finite Fourier approximations used as centers in the global-existence estimates. This distinction matters because Corollary~\ref{cor:global_existence} encloses a trajectory around an approximate equilibrium and includes its nonzero defect in the $Y$-bound.

The initial data for the two IVPs are the coefficient arrays stored in \path{IC_num5.mat} and \path{IC_num7.mat}, extended by zero outside their Fourier truncations. We denote them by $u_0^{(5)}$ and $u_0^{(7)}$, respectively. These are two fixed initial data near the origin; the conclusions below concern these data, rather than every point in a neighborhood. In the BVP, the initial and terminal parameters are instead unknowns determined together with the orbit.

\subsection{First trajectory: global existence near the stripe pattern}\label{sec:OK_GE_proof}

The left panel of Figure~\ref{fig:dynamics_overview} corresponds to $u_0^{(5)}$. The numerical trajectory moves away from the origin and approaches the stripe pattern. The purpose of the validation is to follow this trajectory beyond the finite integration interval and to enclose it for every subsequent time.

We first apply the validation procedure of
Section~\ref{sec:multi-stepping} on $[0,\tau_M]$, with
$M=208$ and $\tau_M=13$. The validated endpoint enclosure
and its distance to $\bu_*^{(2,0)}$ give the matching bound
$\rho_0\approx1.4109\times10^{-3}$.
The infinite-step test then succeeds directly on
$[\tau_M,\infty)$, without any intermediate steps.
The finite-time validation and the infinite step therefore
cover the whole interval $[0,\infty)$.
The corresponding data, together with those for the third
trajectory, are collected in Table~\ref{tab:OK_GE}.

\begin{table}[htbp]
	\centering
	\caption{Discretization parameters and rounded values of the validation bounds for the two global-existence computations. Here $M$ is the number of non-autonomous steps retained in the proof, $\tau_M$ is their terminal time, and $\tau_{M+M_a}$ is the start of the infinite step. The displayed decimals summarize the interval computations; the radii-polynomial tests use the unrounded interval bounds.}\label{tab:OK_GE}\vspace*{2mm}
	\small
	\setlength{\tabcolsep}{3pt}
	\begin{tabular}{l|cc}
    \hline
     & \texttt{num5} (stripe $u_*^{(2,0)}$)
             & \texttt{num7} (spot $u_*^{(1,1)}$) \\
    \hline
    Fourier truncation $\bN$
        & $[20,20]$ & $[28,28]$ \\
    variational truncation $\bbm$
        & $[7,7]$ & $[7,7]$ \\
    available non-autonomous steps
        & $208$ (step size $1/16$)
        & $138$ (step size $1/16$) \\
    validated time $\tau_M$ ($M$ steps)
        & $13.0$ ($208$) & $8.0$ ($128$) \\
    final non-autonomous radius $r_{\mathrm{end}}$
        & $5.2578\times10^{-6}$ & $2.1166\times10^{-5}$ \\
    $\|\bu_*\|_\nu$; defect at $\bu_*$ (approx.)
        & $0.44104$; $4.4325\times10^{-8}$
        & $0.73723$; $8.2518\times10^{-8}$ \\
    autonomous chain ($\bbm_a$, $h_a$, $M_a$)
        & --- & $[12,12]$, $0.1$, $69$ \\
    entry time $\tau_{M+M_a}$
        & $13.0$ & $14.9$ \\
    terminal truncation $\bbm_\infty$
        & $[20,20]$ & $[28,28]$ \\
    $\bm{W_\infty}$; $\bm{V_{0,\infty}}$;
    $\bm{V_{q,\infty}}$ (approx.)
        & $1.2017$; $11.405$; $25.205$
        & $2.1379$; $188.66$; $31.309$ \\
    $\boldsymbol{r_\infty}$; $Z$ (upper bounds)
        & $\mathbf{1.8044\times10^{-3}}$; $0.12060$
        & $\mathbf{6.2165\times10^{-3}}$; $0.86457$ \\
    \hline
\end{tabular}
\end{table}

% Mathematical review: the bridge recurrence in the implementation must be
% matched to the hypotheses of Corollary~\ref{cor:global_existence}; see
% STYLE_REVIEW_NOTES.md. This editorial revision does not rerun validation.
\begin{theorem}[{\bf Global existence for the stripe trajectory}]\label{thm:OK_GE}
 For equation~\eqref{eq:OK_equation} with parameters~\eqref{eq:OK_params} and initial datum $u_0^{(5)}$, the validation bounds in Table~\ref{tab:OK_GE} give the global-existence conclusion of Corollary~\ref{cor:global_existence}. In particular, the solution $\tilde u^{(5)}$ exists for all $t\ge0$ and satisfies
 \begin{align}\label{eq:OK_stripe_enclosure}
  \|\tilde u^{(5)}(t)-\bu_*^{(2,0)}\|_\nu\le r_\infty^{(5)}
  \qquad (t\ge\tau_{M}^{(5)}),
 \end{align}
 where the certified radius and entry time have the rounded values
 \[
  r_\infty^{(5)}\approx1.8044\times10^{-3},\qquad
  \tau_{M}^{(5)}=13.0.
 \]
\end{theorem}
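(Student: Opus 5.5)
The proof is a computer-assisted verification of the hypotheses of Corollary~\ref{cor:global_existence} in the special case $M_a=0$, with $\cN(u)=u\ast u\ast u$, $q=1$ and $\mu_{\bk}$ from \eqref{eq:OK_mu}. Once those hypotheses hold, the corollary gives the conclusion directly. The cubic sign convention is absorbed into $\cN$, so the remainder bounds for $\pm u^3$ apply unchanged.

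\emph{Finite-time segment.} I would first validate the trajectory on $[0,13]$ using $M=208$ steps of length $1/16$, Fourier truncation $\bN=(20,20)$ and variational block $\bbm=(7,7)$.
\begin{itemize}
\item[(a)] On each step, check with interval arithmetic that $V$ is invertible, that $B_{\mathrm{neu},m}^{-1}\ge0$, and that $s_m>0$. This makes Proposition~\ref{prop:one_step} available. For $\bbm=(7,7)$ one has $\mu_{\bbm}<0$, and $V_{\infty,q}^{(J_m,\tau_{m-1})}$ is computed from \eqref{eq:Vinfq}, i.e.\ Appendix~\ref{app:one_step_details}.
\item[(b)] Form the signed interval products $\Phi_{\mathrm{end},i,j}$ and the corrections $\widetilde\triangle_{i,j}$ from \eqref{eq:decomposition}. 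These give $\bm{W^{(J_m,\tau_j)}}$ and $\bm{V_\star^{(J_m,\tau_j)}}$ through Proposition~\ref{prop:multistep_bounds}.
\item[(c)] Enclose the residuals $Y_1^j$, including all modes generated by the cubic term, and the endpoint mismatches $Y_2^j$. Assemble $Y_m$ by \eqref{eq:Ym_bound} and $Z_m^i(r)=\bm{V_q^{(J_m,\tau_{i-1})}}(6\|\bu_i\|_{X_i}r+3r^2)$.
\item[(d)] Find radii $r_m$ and weights $\zeta=(I-\mathsf Z(r))^{-1}\mathbf1$ satisfying \eqref{eq:radii_finite}. Theorem~\ref{thm:RadiiPolynomial} then gives the solution on $[0,13]$, with final radius $r_{\mathrm{end}}\approx5.26\times10^{-6}$.
\end{itemize}

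\emph{Matching and infinite step.} Next I compute the matching bound $\rho_0=r_M+\|\bu_*^{(2,0)}-\bu_M(\tau_M)\|_\nu$ rigorously, which gives about $1.41\times10^{-3}$. On $J_\infty=[13,\infty)$, with $\bbm_\infty=(20,20)$, I diagonalize the finite block of $A_\infty=\cL+\cQ D\cN(\bu_*^{(2,0)})$ and verify that every $\Re\eta_i<0$. I also check $\|K_\infty\|_\infty<1$ and $s_\infty>0$ (Appendix~\ref{app:infinite_linear_bounds}), giving $\bm{W_\infty}\approx1.20$, $\bm{V_{0,\infty}}\approx11.4$ and $\bm{V_{q,\infty}}\approx25.2$. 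The residual $\|f(\bu_*)\|_\nu$ is enclosed, about $4.4\times10^{-8}$. With $Y_\infty^{(0)}=\bm{W_\infty}\rho_0+\bm{V_{0,\infty}}\|f(\bu_*)\|_\nu$, I then verify the cubic form of \eqref{eq:radii_infty} with $r_\infty=1.8044\times10^{-3}$. Rough check: $Y\approx1.70\times10^{-3}$, and the quadratic term $25.2\cdot3\cdot0.441\cdot(1.8\times10^{-3})^2\approx1.1\times10^{-4}$ fits within $r_\infty$. Also $Z\approx3\cdot25.2\cdot1.8\times10^{-3}\cdot0.884\approx0.12<1$.

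\emph{Conclusion and main obstacle.} Corollary~\ref{cor:global_existence} with $M_a=0$ and $\tau_{M+M_a}=\tau_M=13$ then yields global existence and \eqref{eq:OK_stripe_enclosure}. The hard step is the long finite-time validation. The $Z_m^i$ must stay controlled over 208 steps as the trajectory leaves the four-dimensional unstable region of the origin, so the cancellations kept by the signed products $\Phi_{\mathrm{end},i,j}$ are essential. If the radii inequalities fail, I would first shrink the step size or enlarge $\bbm$. As a fallback, I would insert intermediate steps before $J_\infty$ if $\rho_0$ proved too large.
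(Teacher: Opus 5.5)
Your proposal matches the paper's proof: a Radii Polynomial validation of the $208$ steps on $[0,13]$, the matching bound $\rho_0\approx1.4109\times10^{-3}$, and a direct infinite-step test \eqref{eq:radii_infty} with $M_a=0$, after which Corollary~\ref{cor:global_existence} gives \eqref{eq:OK_stripe_enclosure}. One numerical caveat: your rough check does not actually close at $r_\infty=1.8044\times10^{-3}$. With the table values and the scalar residual term $\bm{V_{0,\infty}}\|f(\bu_*)\|_\nu\approx5.1\times10^{-7}$, the left-hand side of the cubic inequality is about $1.8047\times10^{-3}$; even with no residual term the margin is only about $2\times10^{-7}$. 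Certifying that particular radius therefore needs a sharper residual contribution, such as the mode-resolved bound \eqref{eq:residual_volterra_resolved}, whereas the scalar bound still yields global existence but only with $r_\infty\approx1.805\times10^{-3}$.
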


The infinite-step contraction bound is approximately
$1.20595\times10^{-1}$, and the computed radii-polynomial
interval is strictly negative. These tests extend the
finite-time enclosure to the unbounded interval $[13,\infty)$.
Here $\bu_*^{(2,0)}$ is the computed approximation of the
stripe-pattern equilibrium, and
\eqref{eq:OK_stripe_enclosure} gives the uniform enclosure
around this approximation supplied by the fixed-point argument.

\subsection{Second trajectory: a projected boundary value problem}\label{sec:OK_BVP_proof}

The middle panel of Figure~\ref{fig:dynamics_overview} concerns the saddle $u_3$.
Since the origin has four unstable directions and $u_3$ has three,
the finite-dimensional endpoint parameters consist of four initial
coefficients and $n-3$ terminal coefficients.
These give $n+1$ unknowns for the $n$ finite endpoint matching conditions.
The scalar condition~\eqref{eq:phase_condition} supplies the remaining
equation, and the resulting system is the projected BVP of
Section~\ref{sec:BVP}.

We fix the time of flight $\tau=0.1$ and the phase radius $\delta=0.01$. The endpoint conditions are $u(0)=P(\theta)$ and $u(\tau)=Q(\phi)$, with the affine map $Q$ including the tail as in Section~\ref{sec:BVP}. In the numerical data, the four unstable columns in the numerical data are ordered as $(2,0)$, $(2,1)$, $(1,1)$, and $(0,2)$. 
Newton refinement gives $\bar\theta\approx(0,0.01,0,0)$, so the orbit leaves the origin predominantly in the $(2,1)$ direction. The finite terminal parameter satisfies $\|\bar\phi\|_\nu\approx0.06406$.

The computation uses $\bN=(12,12)$, with $n=143$ active modes after removing the fixed zero mode, and $\bbm=(3,3)$ for the variational equation. We divide $[0,0.1]$ into $M=4$ steps. The first three have length $2^{-5}$, and the last one is $[0.09375,0.1]$.
The numbers of retained Chebyshev coefficients on these steps are $20$, $12$, $9$, and $6$, respectively.
The smallest one-step Schur complement is approximately $0.99827$, and the evolution operator satisfies $\|U(\cdot,0)\|\approx1.1325$. The initial value part gives $Y\approx1.6437\times10^{-11}$, $r\approx1.6438\times10^{-11}$, and $Z\approx3.4430\times10^{-12}$. For the reduced endpoint operator, $\|I-BB^\dagger\|_{\mathrm{op}}\le2.40\times10^{-6}$ and $U_{DP}\approx4.1963$. These bounds enter the block estimates of Sections~\ref{sec:BVP_Y} and~\ref{sec:BVP_Z}.
 
% Mathematical review: recheck the reported BVP radius against the phase
% contribution and full-path norm in the revised Z-bound; see
% STYLE_REVIEW_NOTES.md. The displayed decimals are rounded log output.
\begin{theorem}[{\bf Existence of the projected orbit segment}]\label{thm:OK_BVP}
 Let $\bar x=(\bar\theta,\bar\phi,\bu)$ be the numerical BVP solution described above. The componentwise validation of Theorem~\ref{thm:BVP_existence} yields a unique zero $\tilde x=(\tilde\theta,\tilde\phi,\tilde u)$ of $F$ in $B_{r_0}(\bar x)$, where the radius vector has the rounded values
 \begin{align}\label{eq:OK_BVP_radii}
  r_0=(r^\theta,r^\phi,r^u)
  \approx\bigl(8.2301\times10^{-12},\ 3.3385\times10^{-11},\ 5.0973\times10^{-11}\bigr).
 \end{align}
 In particular, $\tilde u$ solves~\eqref{eq:OK_equation} on $[0,0.1]$ and satisfies
 \[
  \tilde u(0)=P(\tilde\theta),\qquad
  \tilde u(0.1)=Q(\tilde\phi),\qquad
  l(\tilde\theta)=\delta^2.
 \]
\end{theorem}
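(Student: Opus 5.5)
The proof is an application of Theorem~\ref{thm:BVP_existence} in its multi-step form (Remark~\ref{rem:BVP_multistep}). Every quantity in \eqref{eq:radii_BVP} will be enclosed with interval arithmetic, and the two inequalities checked rigorously.

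\emph{Setup.} Fix the data described above: $\bN=(12,12)$ with $n=143$ active modes, $\bbm=(3,3)$, and the four steps partitioning $[0,0.1]$ with their Chebyshev orders. Fix also $V^u$ (the canonical vectors of the modes $(2,0),(2,1),(1,1),(0,2)$), the numerical saddle $\hat u$, and the $n-3=140$ numerical stable vectors defining $V^s$.

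\emph{Trajectory part.} On each step we verify the hypotheses of Proposition~\ref{prop:one_step}: $V$ is invertible, $B_{\mathrm{neu},m}^{-1}\ge0$, and $s_m>0$ (the smallest Schur complement is about $0.998$). Proposition~\ref{prop:multistep_bounds} then gives the constants $\bm{W^{(J_m,\tau_j)}}$ and $\bm{V_\star^{(J_m,\tau_j)}}$. From these we assemble:
\begin{itemize}
\item the trajectory defect $\|g^u\|_X$ via \eqref{eq:Ym_bound}, with initial condition $P(\bar\theta)$, and a separate sharper bound for $\|g^u(\tau)\|_\nu$;
\item $\tilde Z(r^u)=\max_m\sum_{i\le m}Z_m^i(r^u)$, using \eqref{eq:Zi_polynomial} for $N(z)=-z^3$;
\item $U_{DP}$ from the columns $U(\cdot,0)v_i^u$, bounded by \eqref{eq:SR_split}.
\end{itemize}
The reported values $Y\approx1.64\times10^{-11}$ and $Z\approx3.4\times10^{-12}$ show that this part is benign.

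\emph{Endpoint operator.} Next we enclose the finite block $\varPi^{(\bN)}U(\tau,0)V^u$. This is the signed product of the interval endpoint enclosures of the fundamental matrices, corrected by the coupling remainder $\widetilde\triangle_{M,0}$ of Lemma~\ref{lem:endpoint_propagation}. The finite modes outside $\FK$ but inside $\FN$ are handled through the tail estimates. With this enclosure we:
\begin{itemize}
\item form $\bar B^\dagger_{\mathrm{fin}}$, of size $144\times144$, and invert it numerically to get $B$;
\item compute $Z_0\ge\|\mathfrak b(I-B\bar B^\dagger)\|_1$ in interval arithmetic;
\item compute $Z_1\ge\|\mathfrak b(B(\bar B^\dagger-B^\dagger))\|_1$, using the block bound on $\varPi^{(\infty)}U(\tau,0)V^u$ from \eqref{eq:product_remainder_bound} and the tail part of \eqref{eq:full_endpoint_bound}.
\end{itemize}
Checking $Z_0+Z_1<1$ (the reported value is $\le2.4\times10^{-6}$) gives $(I-\mathfrak b(G))^{-1}$ and hence the constants $W_a^\bullet$ of \eqref{eq:Wtheta_phi_constants} and \eqref{eq:Wu_constants}.

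\emph{Radii test.} The defect vector $Y$ requires $|g^\theta|=|\bar\theta^\top\bar\theta-\delta^2|$ and $\|g^\phi\|_\nu=\|\bu(\tau)-\hat u-V^s\bar\phi\|_\nu$, both enclosed directly, together with the trajectory bounds above. The derivative bound $Z(r)$ is \eqref{eq:Z_BVP}. We then take $r=r_0$ as in \eqref{eq:OK_BVP_radii}, rounded upward, and check both inequalities in \eqref{eq:radii_BVP}. The conclusion follows from Theorem~\ref{thm:BVP_existence}, and the fixed point of the multi-step map solves \eqref{eq:OK_equation} by Remark~\ref{rem:matching_traces}.

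The main obstacle is the endpoint block. The radii test only passes if $\|g^\phi\|_\nu$ is tiny, since $W_a^\phi$ amplifies it and $r^u$ is driven by $W_3=U_{DP}W_1$. This in turn requires an accurate enclosure of $U(\tau,0)V^u$, with cancellations retained through the signed products rather than scalar norms. I would first verify $Z_0+Z_1<1$ with the signed-product enclosure. If the coupling remainder inflates $Z_1$ or the $W$'s too much, I would enlarge $\bbm$ before refining the time steps.
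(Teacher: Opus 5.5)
Your proposal matches the paper's argument: Theorem~\ref{thm:OK_BVP} is obtained by verifying, in interval arithmetic, the hypotheses of Theorem~\ref{thm:BVP_existence} in the multi-step form of Remark~\ref{rem:BVP_multistep}. The trajectory bounds, $U_{DP}$ and the endpoint block $U(\tau,0)V^u$ come from Proposition~\ref{prop:multistep_bounds} and Lemma~\ref{lem:endpoint_propagation}, followed by the check $Z_0+Z_1<1$ and the componentwise radii test \eqref{eq:radii_BVP} at $r_0$. (Your sign $N(z)=-z^3$, rather than the paper's $\cN(u)=u\ast u\ast u$, is immaterial, since only $|c_p|$ enters \eqref{eq:Zi_polynomial}.)
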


This theorem certifies a finite orbit segment whose endpoints lie in the prescribed projected affine spaces. It does not place these endpoints on the nonlinear unstable manifold of the origin or on the nonlinear stable manifold of $u_3$. A heteroclinic connection from the origin to $u_3$ requires these additional endpoint conditions, and the projected BVP serves as the starting point for that problem.

\subsection{Third trajectory: global existence near the spot pattern}\label{sec:OK_spot_proof}

The right panel of Figure~\ref{fig:dynamics_overview} corresponds to $u_0^{(7)}$. This numerical trajectory passes near the $(0,2)$ saddle before approaching the spot pattern. We use this observation to choose the approximation to validate; the global-existence result itself follows from the same framework used for the stripe trajectory, with an intermediate chain inserted before the infinite step.

For this orbit, we retain $M=128$ non-autonomous steps of length $1/16$, ending at $\tau_M=8$, and then use $M_a=69$ autonomous intermediate steps of length $h_a=0.1$. The infinite step starts at $\tau_{M+M_a}=14.9$. The Fourier truncation is $\bN=(28,28)$, while the non-autonomous and autonomous intermediate steps use the variational truncations $\bbm=(7,7)$ and $\bbm_a=(12,12)$, respectively. The one-step estimates are computed and assembled along the intermediate chain.

\begin{theorem}[{\bf Global existence for the spot trajectory}]\label{thm:OK_GE_spot}
 For equation~\eqref{eq:OK_equation} with parameters~\eqref{eq:OK_params} and initial datum $u_0^{(7)}$, the validation bounds in Table~\ref{tab:OK_GE} give the global-existence conclusion of Corollary~\ref{cor:global_existence}. The solution $\tilde u^{(7)}$ exists for all $t\ge0$ and satisfies
 \begin{align}\label{eq:OK_spot_enclosure}
  \|\tilde u^{(7)}(t)-\bu_*^{(1,1)}\|_\nu\le r_\infty^{(7)}
  \qquad (t\ge\tau_{M+M_a}^{(7)}),
 \end{align}
 where the certified radius and entry time have the rounded values
 \[
  r_\infty^{(7)}\approx6.2165\times10^{-3},\qquad
  \tau_{M+M_a}^{(7)}=14.9.
 \]
\end{theorem}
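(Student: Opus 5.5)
The proof is a direct application of Corollary~\ref{cor:global_existence}. The work is to check, in interval arithmetic, each of its hypotheses for the spot data of Table~\ref{tab:OK_GE}. There are three stages.

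\emph{Stage one: the non-autonomous steps.} For the finite-time part I would take $M=128$ steps of length $1/16$ on $[0,8]$, with $\bN=(28,28)$ and $\bbm=(7,7)$. On each step $J_m$ I would first enclose the fundamental matrix $C_m^{(\bbm)}$ by a Chebyshev series. From this I would compute $\Phi_{\sup,m}$ and $\Phi_{\mathrm{end},m}$, and verify $B_{\mathrm{neu},m}^{-1}\ge0$ and $s_m>0$, so that Proposition~\ref{prop:one_step} applies. The constants $V_{\infty,q}^{(J_m,\tau_{m-1})}$ come from the explicit Ohta--Kawasaki multipliers \eqref{eq:OK_mu}.

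Next I would form the signed products $\Phi_{\mathrm{end},i,j}$ and the corrections $\widetilde\triangle_{i,j}$. Proposition~\ref{prop:multistep_bounds} then gives $\bm{W^{(J_m,\tau_j)}}$ and $\bm{V_\star^{(J_m,\tau_j)}}$. From these I obtain $Y_m$ via \eqref{eq:Ym_bound}, including all residual modes of $\bu_m$ generated by the cubic term. I obtain $Z_m^i$ via \eqref{eq:Zmi_bound}, with $Z_i(r)=6\|\bu_i\|_{X_i}r+3r^2$.

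I would then choose radii and the weights $\zeta=(I-\mathsf Z(r))^{-1}\mathbf 1$, and verify \eqref{eq:radii_finite}. This produces $r_{\mathrm{end}}\approx2.1\times10^{-5}$ and the initial endpoint radius $r_M^{\mathrm{ept}}$.

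\emph{Stage two: the autonomous chain.} Here I would use $\bbm_a=(12,12)$, $h_a=0.1$ and $M_a=69$. Since the reference is constant, $\bu_{M+k}\equiv\bu_*^{(1,1)}$, the one-step linear bounds $\bm{W_{a}^{\mathrm{ept}}}$, $\bm{V_{0,a}}$ and $\bm{V_{q,a}}$ are computed once and reused. For each $k$ I would verify \eqref{eq:radii_aut} uniformly over the incoming enclosure. I would then propagate the endpoint radius by Proposition~\ref{prop:intermediate_enclosure}, or by the signed-product refinement, and form $\rho_k$ from \eqref{eq:intermediate_matching}.

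On a constant reference the residual is just $f(\bu_*)$, which is of size about $8\times10^{-8}$. So along this chain the defect is essentially the propagated incoming error, and the radius should decay like the slowest stable rate at the spot. The proof needs the radius to decay from roughly $10^{-3}$ (the stage-one endpoint radius plus its distance to $\bu_*^{(1,1)}$) to a level that the infinite-step test accepts.

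\emph{Stage three: the infinite step.} At $t_0=14.9$ I would compute $\bm{W_\infty}$, $\bm{V_{0,\infty}}$ and $\bm{V_{q,\infty}}$ from the diagonalization of $A_\infty$ with $\bbm_\infty=(28,28)$. This requires checking $\|K_\infty\|_\infty<1$ and $s_\infty>0$. I would then verify the cubic form of \eqref{eq:radii_infty} with $\rho_{69}$ and $r_\infty\approx6.2165\times10^{-3}$.

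Once all hypotheses hold, Corollary~\ref{cor:global_existence} gives the solution on $[0,\infty)$ and the enclosure \eqref{eq:OK_spot_enclosure} for $t\ge\tau_{M+M_a}$.

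\emph{Main obstacle.} I expect the hard step to be closing the infinite-step test. The resulting contraction constant $Z\approx0.86$ is close to $1$. This is because $\bm{V_{0,\infty}}\approx189$ is large: the spot has a weakly stable direction. The product $\bm{V_{0,\infty}}\|f(\bu_*)\|_\nu$ and the term $\bm{W_\infty}\rho_{M_a}$ must both be small enough. I would handle this in two ways. First, I would extend the autonomous chain until $\rho_k$ is small enough. Second, if needed, I would replace the scalar residual term by the mode-resolved estimate \eqref{eq:residual_volterra_resolved}, so that the slow direction does not amplify the whole residual. A secondary difficulty is preventing geometric growth of the radius along the 69 scalar steps; for that I would use the signed-product endpoint radius rather than the scalar bound.
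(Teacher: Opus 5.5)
Your proposal is correct and follows the paper's route: the theorem comes from verifying, in interval arithmetic, the hypotheses of Corollary~\ref{cor:global_existence} with the bounds of Table~\ref{tab:OK_GE}, namely the $128$ non-autonomous steps on $[0,8]$, the $69$-step autonomous chain with $\bbm_a=(12,12)$ and $h_a=0.1$, and the infinite step at $t_0=14.9$. One correction to your obstacle discussion: $Z_\infty(r_\infty)=3\bm{V_{q,\infty}}r_\infty(2\|\bu_*\|_\nu+r_\infty)\approx0.8646$ does not involve $\bm{V_{0,\infty}}$, and in $Y_\infty^{(M_a)}$ the residual term $\bm{V_{0,\infty}}\|f(\bu_*)\|_\nu\approx1.6\times10^{-5}$ is negligible, so the tight margin comes from $\bm{W_\infty}\rho_{M_a}$ and the relevant lever is lengthening the chain rather than using the mode-resolved residual.
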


Here the infinite-step contraction bound is approximately $8.6456\times10^{-1}$, again with a strictly negative radii-polynomial interval. Theorems~\ref{thm:OK_GE}, \ref{thm:OK_BVP}, and~\ref{thm:OK_GE_spot} give the three conclusions represented from left to right in Figure~\ref{fig:dynamics_overview}: global existence with a stripe enclosure, existence of a projected BVP orbit, and global existence with a spot enclosure. The two uniform enclosures are the conclusions of Corollary~\ref{cor:global_existence}; asymptotic convergence to exact equilibria requires an additional decay argument.
% 
% The computations use INTLAB~\cite{Ru99a} for interval arithmetic. The two IVP validations are implemented in \path{script_proof_GE_2DOK_num5.m} and \path{script_proof_GE_2DOK_num7.m}, which call the common routine \path{proof_GE_2DOK_core.m}; the projected BVP uses \path{script_proof_connexion4to3.m}. 
The bounds in Table~\ref{tab:OK_GE} and~\eqref{eq:OK_BVP_radii} summarize the saved validation data \cite{bib:codes}.

\section{Conclusion}

We have developed a constructive method to rigorously solve initial and projected boundary value problems for semilinear parabolic PDEs. The main contribution is the construction of evolution-operator estimates that retain information about individual Fourier modes and preserve cancellations when propagating errors across time steps. These estimates use forward variational integration and semigroup comparison, while parabolic smoothing controls the infinite tail and derivatives in the nonlinear term. The same estimates support finite-time trajectory validation, global enclosures, and projected BVPs, for which the construction of the linearized inverse reduces to a finite system for the endpoint parameters. For global validation, an intermediate chain connects the finite-time enclosure to the infinite-step test when a direct connection is not available.

We illustrated the method with three trajectories of the two-dimensional Ohta--Kawasaki equation. For two prescribed initial data, we proved global existence and obtained explicit uniform enclosures around numerical stripe and spot approximations for every time beyond certified entry times. For the third trajectory, we validated a finite orbit segment satisfying projected endpoint conditions associated with the origin and a numerical saddle, with uniqueness in an explicit product ball for the endpoint parameters and the trajectory. These results provide quantitative information about the trajectories, although they do not establish convergence to exact equilibria or a heteroclinic connection.

Further work concerns the dynamical interpretation of these validated trajectories. The eventual enclosures could be combined with validated equilibrium and decay estimates to establish convergence to exact equilibria. Similarly, combining the projected BVP construction with validated local parameterizations of unstable and stable manifolds would provide a route to proving heteroclinic connections. The evolution estimates and the finite endpoint reduction developed here provide a basis for these extensions. Further reductions in the wrapping effect, for example through affine arithmetic, may also improve the bounds over longer integration intervals.

\section*{Acknowledgement}
JPL was supported by the Natural Sciences and Engineering Research Council of Canada (NSERC).
AT was supported by the Japan Science and Technology Agency (JST) FOREST Program (Grant Number JPMJFR246A), and by JSPS KAKENHI Grant Numbers 24K00538 and 26K00619.

\bibliographystyle{plain}
\bibliography{papers}

\appendix
\section{Newton refinement of the approximate solution}\label{sec:Newton-refinement}

We refine the numerical solution of the truncated system
\eqref{eq:ODEs_finite} by Newton iteration on its Chebyshev-Fourier
coefficients, following the Chebyshev-series approach of
\cite{MR3148084}. We describe the construction on a time step $[\tau_{m-1},\tau_m]$ of length $h_m$. On the first step, the initial condition is $u_0$. On each subsequent step, it is (an approximation of) the endpoint value of the approximation on the preceding step; any mismatch is included in the defect bound through $y_2^m$ in \eqref{eq:local_defects}. The resulting approximation is validated by the
Radii Polynomial Theorem in Section~\ref{sec:trajectory_validation}.

Let

\[
  \theta(t)\bydef \frac{2(t-\tau_{m-1})}{h_m}-1.
\]
For a prescribed Chebyshev order $n$, we look for coefficients
$a=(a_{\ell,\bk})_{0\le \ell<n,\ \bk\in\FN}$ such that
\[
  \bu_{\bk}^{(\bN)}(t)=
  \sum_{\ell=0}^{n-1}a_{\ell,\bk}T_\ell(\theta(t)),
  \qquad \bk\in\FN .
\]
The coefficients are determined as a zero of a finite-dimensional map
\[
  F_{\rm ivp}(a)=0,\qquad
  F_{\rm ivp}=(F_{\rm ode},F_{\rm init}).
\]
The first block $F_{\rm ode}\in\R^{(n-1)|\FN|}$ consists of the first
$n-1$ Chebyshev coefficients of the differential equation residual
\[
  \frac{d}{dt}\bu_{\bk}^{(\bN)}(t)
  -\mu_{\bk}\bu_{\bk}^{(\bN)}(t)
  -(-1)^q(\kk\bL)^{2q}\cN_{\bk}(\bu^{(\bN)}(t)),
  \qquad \bk\in\FN .
\]
The remaining block $F_{\rm init}\in\R^{|\FN|}$ is the bordering equation
which imposes the initial condition:
\[
  (F_{\rm init})_{\bk}
  =\sum_{\ell=0}^{n-1}(-1)^\ell a_{\ell,\bk}-(u_0)_{\bk},
  \qquad \bk\in\FN .
\]
Here $u_0$ denotes the initial condition of the current step. Thus $F_{\rm ivp}$ has $n|\FN|$ equations for the $n|\FN|$ unknown
Chebyshev-Fourier coefficients.

The derivative $DF_{\rm ivp}(a)$ is assembled explicitly.  The linear part
comes from the Chebyshev differentiation matrix, the diagonal Fourier
multiplier $\mu_{\bk}$, and the bordered row corresponding to evaluation at
the initial time of the step.  The nonlinear part is obtained by applying the Fr\'echet derivative
$D\cN(\bu^{(\bN)})$ in Chebyshev-Fourier coefficient space.  Starting from
the Chebyshev interpolation of the numerical orbit, we perform Newton
iterations
\[
  a^{j+1}=a^j-\left(DF_{\rm ivp}(a^j)\right)^{-1}F_{\rm ivp}(a^j)
\]
until the finite-dimensional residual is sufficiently small.  The resulting
coefficients define the refined Chebyshev-Fourier polynomial
$\bu^{(\bN)}$, which is the approximate orbit used in the validation
estimates.

\section{Details of the finite-time validation}\label{app:linear_validation_details}
This appendix supplies the proofs  and the tail constants used in Section~\ref{sec:multi-stepping}. The main validation argument only requires their conclusions.

\subsection{Proof of the Radii Polynomial Theorem}\label{app:validation_proofs}
\begin{proof}[Proof of Theorem~\ref{thm:RadiiPolynomial}]
Let $\mathcal B=\prod_{i=1}^M B_{r_i}(\bu_i)$. We first show that $\bm T$ maps $\mathcal B$ into itself. We then prove that it is a contraction in a weighted product norm. This follows the product-space argument of \cite[Theorem~2.12]{JBMaxime}, with the integrated derivative bounds used here.

Fix $u\in\mathcal B$ and set $h_i=u_i-\bu_i$ and $a_i=\|h_i\|_{X_i}\le r_i$. The segment $u(\theta)=\bu+\theta(u-\bu)$, $0\le\theta\le1$, lies in $\mathcal B$. Since $\bm T$ is continuously Fr\'echet differentiable and $T_m$ depends only on the first $m$ components, the fundamental theorem of calculus gives
\begin{equation}\label{eq:proof_component_segment}
 T_m(u)-T_m(\bu)=\int_0^1\sum_{i=1}^m D_iT_m(u(\theta))h_i\,d\theta.
\end{equation}
For $\theta>0$ and $a_i>0$, the derivative hypothesis applies with radius $\theta a_i$, since $\|u_i(\theta)-\bu_i\|_{X_i}=\theta a_i$. It is independent of the other components of $u(\theta)$. The contribution of an index with $a_i=0$ is zero. The value at $\theta=0$ does not affect the integral. Taking norms therefore gives
\begin{equation}\label{eq:proof_integrated_derivative}
 \|T_m(u)-T_m(\bu)\|_{X_m}
 \le\sum_{i=1}^m a_i\int_0^1 Z_m^i(\theta a_i)\,d\theta
 =\sum_{i=1}^m\int_0^{a_i} Z_m^i(s)\,ds.
\end{equation}
Using the defect bound and the first inequality in \eqref{eq:validation_inequalities}, we obtain
\begin{equation}\label{eq:proof_invariant_box}
 \|T_m(u)-\bu_m\|_{X_m}
 \le Y_m+\sum_{i=1}^m\int_0^{a_i}Z_m^i(s)\,ds
 \le Y_m+\sum_{i=1}^m\int_0^{r_i}Z_m^i(s)\,ds
 \le r_m.
\end{equation}
The second inequality uses $a_i\le r_i$ and the nonnegativity of $Z_m^i$. Thus every component of $\bm T(u)$ lies in its prescribed ball, and $\bm T(\mathcal B)\subset\mathcal B$.

For the contraction estimate, use the positive weights $\zeta_i$ from the theorem and define
\begin{equation}\label{eq:proof_weighted_norm}
 \|w\|_{\bm X,\zeta}\bydef\max_{1\le i\le M}\frac{\|w_i\|_{X_i}}{\zeta_i}.
\end{equation}
This norm is equivalent to the usual maximum norm because $M$ is finite and all weights are positive. Hence $\bm X$ is complete in this norm. Each component ball is closed, so $\mathcal B$ is a complete metric space with the induced distance. The weights need not equal the radii.

Let $u,v\in\mathcal B$. By convexity, $w(\theta)=v+\theta(u-v)$ remains in $\mathcal B$. Applying the fundamental theorem of calculus along this segment, and now using the derivative bound at the fixed radius $r_i$, yields
\begin{equation}\label{eq:proof_component_contraction}
 \|T_m(u)-T_m(v)\|_{X_m}
 \le\sum_{i=1}^m Z_m^i(r_i)\|u_i-v_i\|_{X_i}
 \le\left(\sum_{i=1}^m Z_m^i(r_i)\zeta_i\right)\|u-v\|_{\bm X,\zeta}.
\end{equation}
Define
\begin{equation}\label{eq:proof_contraction_constant}
 \kappa\bydef\max_{1\le m\le M}\frac{1}{\zeta_m}\sum_{i=1}^m Z_m^i(r_i)\zeta_i<1.
\end{equation}
The strict inequality follows from the second inequality in \eqref{eq:validation_inequalities} and the finite number of components. Dividing \eqref{eq:proof_component_contraction} by $\zeta_m$ and taking the maximum gives
\begin{equation}\label{eq:proof_product_contraction}
 \|\bm T(u)-\bm T(v)\|_{\bm X,\zeta}\le\kappa\|u-v\|_{\bm X,\zeta}.
\end{equation}
Thus $\bm T$ is a contraction from $\mathcal B$ into itself. The Banach fixed-point theorem gives a unique fixed point $\tu\in\mathcal B$, with $\|\tu_m-\bu_m\|_{X_m}\le r_m$ for every $m$. In the present application, Remark~\ref{rem:matching_traces} shows that its components join continuously and satisfy the prescribed initial condition. The fixed-point formula then identifies this trajectory with a solution of the original initial value problem.
\end{proof}

\subsection{Tail constants for the Ohta--Kawasaki operator}\label{app:one_step_details}
 We bound the tail constants \eqref{eq:Vinfq} for the operator of Section~\ref{sec:OK_application}. For this operator, $q=1$ and $-\mu_{\bk}=\delta( \xi_{\bk})$, where
\[
  \xi_{\bk}\bydef(\kk\bL)^2,\qquad \delta( \xi)\bydef\varepsilon^2{ \xi}^2- \xi+\sigma,
 \qquad r_{\bbm}\bydef\min_{1\le j\le d}(K_jL_j)^2.
\]
Every tail mode satisfies $ \xi_{\bk}\ge r_{\bbm}$. We verify
$\delta(r_{\bbm})>0$, $\delta'(r_{\bbm})>0$, and $\varepsilon^2r_{\bbm}^2>\sigma$.
Then $\delta$ is positive and increasing on  $[r_{\bbm},\infty)$, while
\[
 \left(\frac{ \xi}{\delta( \xi)}\right)'=\frac{\sigma-\varepsilon^2{ \xi}^2}{\delta( \xi)^2}<0.
\]
Thus the infinite-time tail constants  (see \eqref{eq:Vinfq_infty}) are bounded by $1/\delta(r_{\bbm})$ and
$r_{\bbm}/\delta(r_{\bbm})$. On a finite interval,  $V_{\infty,0}^{(J_m,\tau_{m-1})}$ is bounded
by $(1-e^{-\delta(r_{\bbm})h_m})/\delta(r_{\bbm})$.  To obtain a sharper bound for $V_{\infty,q}^{(J_m,\tau_{m-1})}$, we evaluate the integral for each $\bk\notin\FK$ with $\bk<\bN'$, and bound it for the remaining modes by $r_{\bN'}/\delta(r_{\bN'})$, where $\bN'>\bbm$. The three conditions above also hold with $r_{\bbm}$ replaced by $r_{\bN'}\ge r_{\bbm}$, since $\delta$, $\delta'$, and $\varepsilon^2\xi^2$ are increasing on $[r_{\bbm},\infty)$. Hence $\xi_{\bk}/\delta(\xi_{\bk})\le r_{\bN'}/\delta(r_{\bN'})$ for every remaining mode, and the integral for such a mode is bounded by this quantity. The maximum of these two bounds bounds $V_{\infty,q}^{(J_m,\tau_{m-1})}$. Note that a maximum over finitely many modes alone does not give such a bound.

\subsection{Proof of the multi-step bounds}\label{app:multistep_proof}
In this subsection, we prove Lemma~\ref{lem:endpoint_propagation} and Proposition~\ref{prop:multistep_bounds}, using the vectors $z(\cdot)$  defined in \eqref{eq:profile_vector}.

\begin{proof}[Proof of Lemma~\ref{lem:endpoint_propagation}]
 The product $E_i\cdots E_{j+1}$ is block diagonal,
\begin{equation}\label{eq:Eprod_block}
 E_i\cdots E_{j+1}
 =\operatorname{diag}\bigl(C_{i,j}^{(\bbm)},
                         e^{\cL_\infty(\tau_i-\tau_j)}\bigr),
 \qquad C_{i,j}^{(\bbm)}\in{ \Phi}_{ \mathrm{end},i,j}{ ,}
\end{equation}
where
\[
 C_{i,j}^{(\bbm)}\bydef
 C_i^{(\bbm)}(\tau_i,\tau_{i-1})\cdots
 C_{j+1}^{(\bbm)}(\tau_{j+1},\tau_j)
\]
lies in $\Phi_{\mathrm{end},i,j}$ by inclusion monotonicity of interval multiplication. Hence
\[
 D_\nu|C_{i,j}^{(\bbm)}\phi^{(\bbm)}|
 \le D_\nu\operatorname{mag}(\Phi_{\mathrm{end},i,j})
 D_\nu^{-1}D_\nu|\phi^{(\bbm)}|.
\]
Since $\|e^{\cL_\infty t}\phi^{(\infty)}\|_\nu
\le e^{\mu_{\bbm}t}\|\phi^{(\infty)}\|_\nu$, this proves
$z(E_i\cdots E_{j+1}\phi)\le R_{i,j}^{(\mathrm{unc})}z(\phi)$
and, taking block norms,
$\mathfrak b(E_i\cdots E_{j+1})\le\bar E_{i,j}$.

For the one-step remainder, from the variation of constants formula on $J_k$  with $f=0$ , $\widetilde C_k\phi$  is the value at $\tau_k$  of the terms involving $\mathsf G_k$  in \eqref{eq:coupled_components}, so that $z(\widetilde C_k\phi)\le\mathsf G_k\mathsf U_k z(\phi)$ . Taking block norms yields $\mathfrak b(\widetilde C_k)\le\widetilde\triangle_k$ with \eqref{eq:Delta_k_def}. Applying $\mathfrak b$  to the telescoping identity \eqref{eq:telescoping}, using subadditivity and submultiplicativity, and arguing by induction on $i-j$  gives \eqref{eq:product_remainder_bound}. Adding $\mathfrak b(E_i\cdots E_{j+1})\le\bar E_{i,j}$  gives \eqref{eq:full_endpoint_bound}.
\end{proof}

\begin{proof}[Proof of Proposition~\ref{prop:multistep_bounds}]
\begingroup
Let $M_W$ and $M_V$ denote the $2\times2$ matrices inside the norms in \eqref{eq:SR_split}-\eqref{eq:WV_onestep}, and let $v_m\bydef v|_{J_m}$ for the solution $v$ of \eqref{eq:model_linear_problem}. We show that
\begin{equation}\label{eq:multistep_bound}
 \operatorname{pair}(z(v_m))\le M_W\operatorname{pair}(z(\phi))+M_V\operatorname{pair}(z(f)).
\end{equation}
Since the entries of $\operatorname{pair}(z(w))$ sum to the norm of $w$ by \eqref{eq:profile_properties}, the induced matrix $1$-norm then gives $\|v_m\|_{X_m}\le\|M_W\|_1\|\phi\|_\nu+\|M_V\|_1\|f\|_{X_{j+1}}$, which is \eqref{eq:WVbounds}. We use the inequalities $\operatorname{pair}(Nx)\le\mathfrak b(N)\operatorname{pair}(x)$ for $N\ge0$, and $\operatorname{pair}(z(A\phi))\le\mathfrak b(A)\operatorname{pair}(z(\phi))$ for bounded operators $A$.

If $j=m-1$, Proposition~\ref{prop:one_step} gives $z(v_m)\le\mathsf U_mz(\phi)+\mathsf V_{\star,m}z(f)$, and \eqref{eq:multistep_bound} follows by applying $\operatorname{pair}$.

Let $j\le m-2$. By linearity, we treat the initial value and the forcing separately. On $J_m$, Proposition~\ref{prop:one_step} with zero forcing gives $z(v_m)\le\mathsf U_mz(v(\tau_{m-1}))$. For the initial value, $v(\tau_{m-1})=U(\tau_{m-1},\tau_j)\phi$, which we split as in \eqref{eq:telescoping}. Lemma~\ref{lem:endpoint_propagation} gives $\mathsf U_mz(E_{m-1}\cdots E_{j+1}\phi)\le\mathsf U_mR^{(\mathrm{unc})}_{m-1,j}z(\phi)$ for the uncoupled part, and applying $\operatorname{pair}$ yields $\mathfrak b(\mathsf U_mR^{(\mathrm{unc})}_{m-1,j})$; the signed products are thus kept until the block norms are taken. For the remainder, the two properties above and \eqref{eq:product_remainder_bound} yield $\mathfrak b(\mathsf U_m)\widetilde\triangle_{m-1,j}$. For the inhomogeneous term, Proposition~\ref{prop:one_step} on $J_{j+1}$ gives $z(v(\tau_{j+1}))\le\mathsf V_{\star,j+1}z(f)$, since $\tau_{j+1}\in J_{j+1}$. On $[\tau_{j+1},\tau_m]$, $v$ solves the homogeneous equation, and the same argument with $j+1$ in place of $j$ applies; if $j=m-2$, then $R^{(\mathrm{unc})}_{m-1,m-1}=I$ and $\widetilde\triangle_{m-1,m-1}=0$. Adding the contributions proves \eqref{eq:multistep_bound}.
\endgroup
\end{proof}

\section{Details of the global existence construction}\label{app:global_details}
This appendix supplies the linear bounds, endpoint propagation,
and optional product-space argument used in Section~\ref{sec:global_in_time}.
\subsection{Autonomous linear bounds}\label{app:infinite_linear_bounds}

The estimates needed for \eqref{eq:T_infty} are a uniform bound for the initial-data term and an integrated bound for the forcing term. We first obtain them for the uncoupled finite block and tail, and then resolve their coupling.

For a finite linearized block $\bbm$, let
\[
 A_{\bbm}\bydef\cL_{\bbm}+\varPi^{(\bbm)}\cQ D\cN(\bu_*)\varPi^{(\bbm)}
 \in\R^{\m\times\m}.
\]
Choose an invertible matrix $V$ and a diagonal matrix $\Lambda=\mathrm{diag}(\eta_0,\ldots,\eta_{\m-1})$ from a numerical diagonalization, and set $\bA=V\Lambda V^{-1}$. The matrix $\bA$ approximates $A_{\bbm}$; their difference is enclosed below. We assume
\begin{enumerate}[(i)]
 \item $\Re\eta_i<0$ for $i=0,\ldots,\m-1$;
 \item $\mu_{\bbm}<0$, with $\mu_{\bbm}$ defined in \eqref{eq:mu_bbm_def}.
\end{enumerate}
These conditions make the uncoupled exponential integrals finite. Validation of the diagonalization defect and the finite-tail coupling is also required.

For $\Re\eta_i<0$ and $\mu_{\bk}<0$, direct integration gives
\[
 \int_0^\infty e^{\Re\eta_i s}\,ds=-\frac1{\Re\eta_i},
 \qquad
 \int_0^\infty e^{\mu_{\bk}s}\qk\,ds=\frac{\qk}{|\mu_{\bk}|}.
\]
Hence the infinite-time counterparts of \eqref{eq:bV-esitmates} and \eqref{eq:Vinfq} are
\begin{align}
 \widehat V^{(\bbm)}_\infty
 &\bydef |V|\,\mathrm{diag}\!\left(-\frac1{\Re\eta_i}\right)|V^{-1}|,
 \label{eq:bV_infty}\\
 V_{\infty,q}^{(J_\infty,t_0)}
 &\bydef\sup_{\bk\notin\FK}\frac{\qk}{|\mu_{\bk}|}.
 \label{eq:Vinfq_infty}
\end{align}
The first matrix bounds the integrated semigroup of $\bA$, not yet that of $A_{\bbm}$.
To account for the diagonalization defect, define
\[
 E_{\bbm}(r)\bydef e^{A_{\bbm}r},\qquad
 \widehat C^{(\bbm)}(r)\bydef Ve^{\Lambda r}V^{-1},\qquad r\ge0.
\]
We use $\widehat C$ for the exponential of the approximate matrix, to distinguish it from the true fundamental solution $C$ in Section~\ref{sec:one-step}. Variation of constants gives
\begin{align}\label{eq:aut_defect_identity}
 E_{\bbm}(r)-\widehat C^{(\bbm)}(r)
 =\int_0^r Ve^{\Lambda(r-s)}V^{-1}(A_{\bbm}-\bA)E_{\bbm}(s)\,ds.
\end{align}
Let $\triangle_{\sup}$ be an entrywise enclosure of $|V^{-1}(A_{\bbm}-\bA)|$, and set
\[
 \tV^{(\bbm)}_\infty\bydef |V|\,\mathrm{diag}\!\left(-\frac1{\Re\eta_i}\right),
 \qquad K_\infty\bydef\tV^{(\bbm)}_\infty\triangle_{\sup},
 \qquad B_{\mathrm{neu},\infty}\bydef I-K_\infty.
\]
For each finite $T>0$, taking entrywise suprema in \eqref{eq:aut_defect_identity} yields
\[
 S_T\le |\widehat C^{(\bbm)}|_{\infty,[0,T]}+K_\infty S_T,
 \qquad S_T\bydef\sup_{0\le r\le T}|E_{\bbm}(r)|.
\]
We verify $\|K_\infty\|_\infty<1$ in interval arithmetic. Then
$B_{\mathrm{neu},\infty}^{-1}=\sum_{n\ge0}K_\infty^n\ge0$, so multiplication by this inverse preserves the inequality. The resulting bound is independent of $T$. Letting $T\to\infty$ proves
\[
 \sup_{r\ge0}|E_{\bbm}(r)|
 \le B_{\mathrm{neu},\infty}^{-1}|\widehat C^{(\bbm)}|_\infty.
\]
Introducing the Fourier weights gives the finite-block observation matrix
\begin{align}\label{eq:Phi_sup_infty}
 \Phi_{\sup,\infty}\bydef D_\nu B_{\mathrm{neu},\infty}^{-1}
 |\widehat C^{(\bbm)}|_\infty D_\nu^{-1}.
\end{align}
The same correction applies to the inhomogeneous problem, and all supremum estimates below are obtained in the same way: they are first taken over $0\le r\le T$ with constants independent of $T$, and then $T\to\infty$.
Indeed, for $\dot z=A_{\bbm}z+\psi$ with $z(0)=0$, variation of constants about $\bA$ gives
\[
 |z|_\infty\le K_\infty|z|_\infty+\widehat V^{(\bbm)}_\infty|\psi|_\infty.
\]
Consequently, its integrated semigroup matrix is
\[
 \mathcal{V}_{\bbm,\infty}\bydef B_{\mathrm{neu},\infty}^{-1}\widehat V^{(\bbm)}_\infty,
 \qquad
 \mathcal{V}_{\bbm,\infty}^{(w)}\bydef D_\nu\mathcal{V}_{\bbm,\infty}D_\nu^{-1}.
\]
It remains to evaluate $|\widehat C^{(\bbm)}|_\infty$ rigorously. All times in the following calculation are elapsed times $r=t-t_0$. We partition a finite window $[0,T_{\max}]$ into intervals $T_i=[\vartheta_i,\vartheta_{i+1}]$, with $\Delta_i=\vartheta_{i+1}-\vartheta_i$ and midpoint $\vartheta_i^{\mathrm m}=(\vartheta_i+\vartheta_{i+1})/2$. Since
\[
 (\widehat C^{(\bbm)})'(r)=V\Lambda e^{\Lambda r}V^{-1},\qquad
 (\widehat C^{(\bbm)})''(r)=V\Lambda^2 e^{\Lambda r}V^{-1},
\]
Taylor's formula about the midpoint, together with $|r-\vartheta_i^{\mathrm m}|\le\Delta_i/2$, gives
\begin{align}\label{eq:sup_meanvalue}
 \sup_{r\in T_i}|\widehat C^{(\bbm)}(r)|
 &\le |\widehat C^{(\bbm)}(\vartheta_i^{\mathrm m})|
 +\frac{\Delta_i}{2}|V\Lambda e^{\Lambda \vartheta_i^{\mathrm m}}V^{-1}|
 +\frac{\Delta_i^2}{8}|V|\,
 \mathrm{diag}\!\left(|\lambda_j|^2e^{\Re\lambda_j\vartheta_i}\right)|V^{-1}|.
\end{align}
The factor $\Delta_i^2/8$ is $\frac12(\Delta_i/2)^2$, and the exponential in the second-derivative bound is largest at $\vartheta_i$ because $\Re\lambda_j<0$. The first two matrix products are evaluated before taking absolute values, so they retain cancellation. Only the remainder uses an entrywise nonnegative majorant.

The partition may be chosen adaptively to control the remainder in \eqref{eq:sup_meanvalue}. For the remaining times,
\[
 \sup_{r\ge T_{\max}}|\widehat C^{(\bbm)}(r)|
 \le |V|\,\mathrm{diag}\!\left(e^{\Re\lambda_jT_{\max}}\right)|V^{-1}|.
\]
Taking the entrywise maximum of this tail bound and the finite-window bounds encloses the full supremum. These estimates also apply to complex eigenvalues, through $\Re\lambda_j$ and $|\lambda_j|^2$.

\paragraph{Finite-tail coupling.}
The preceding bounds control the finite matrix $A_{\bbm}$ and the diagonal tail separately. We now include the off-diagonal interactions and the tail self-interaction. Let $\Psi_\infty$ and $\varPhi_\infty$ be the derivative bounds of \eqref{eq:psi_def} and \eqref{eq:cout_def}, evaluated at $\bu_*$. Write
\[
 Q_{\bbm}\bydef\mathrm{diag}(\qk)_{\bk\in\FK}.
\]
The factor $Q_{\bbm}$ is needed because \eqref{eq:psi_def} bounds the tail-to-finite forcing by $Q_{\bbm}\Psi_\infty\|v^{(\infty)}\|$. We keep this multiplier explicit in the coupling vector:
\begin{align}
 c_{\mathrm{in},\infty}
 &\bydef D_\nu|\mathcal{V}_{\bbm,\infty}|Q_{\bbm}\Psi_\infty,
 &c_{\mathrm{out},\infty}^T
 &\bydef V_{\infty,q}^{(J_\infty,t_0)}\varPhi_\infty^TD_\nu^{-1},
 \label{eq:cinout_infty}\\
 g_{22,\infty}
 &\bydef V_{\infty,q}^{(J_\infty,t_0)}g(\|\bu_*\|_\nu),
 &\mathsf G_\infty
 &\bydef\begin{pmatrix}0&c_{\mathrm{in},\infty}\\c_{\mathrm{out},\infty}^T&g_{22,\infty}\end{pmatrix}.
 \label{eq:G_infty_cpl}
\end{align}
For the coupled problem, we likewise work first on
$[t_0,t_0+T]$. Let $x$ contain the weighted finite-mode suprema and the tail supremum of a solution of the linearized problem, let $p$ be the corresponding initial-data vector, and let $b$ be the forcing vector. The one-step argument of \eqref{eq:coupled_components} gives
\[
 x\le\bU_\infty p+\mathsf G_\infty x+\bV_{0,\infty}b.
\]
We verify positivity of the Schur complement
\begin{align}\label{eq:schur_infty}
 s_\infty\bydef1-g_{22,\infty}-c_{\mathrm{out},\infty}^Tc_{\mathrm{in},\infty}>0.
\end{align}
Indeed, substituting the top-row inequality into the bottom row gives a tail bound with denominator $s_\infty$. Substituting this bound back into the top row gives
\begin{align}\label{eq:B_infty}
 B_\infty\bydef(I-\mathsf G_\infty)^{-1}
 =\begin{pmatrix}
 I+c_{\mathrm{in},\infty}s_\infty^{-1}c_{\mathrm{out},\infty}^T&c_{\mathrm{in},\infty}s_\infty^{-1}\\
 s_\infty^{-1}c_{\mathrm{out},\infty}^T&s_\infty^{-1}
 \end{pmatrix}\ge0.
\end{align}
Thus $x\le B_\infty\bU_\infty p+B_\infty\bV_{0,\infty}b$, where
\begin{align}\label{eq:bU_bV_infty}
 \bU_\infty\bydef\begin{pmatrix}\Phi_{\sup,\infty}&0\\0&1\end{pmatrix},\qquad
 \bV_{0,\infty}\bydef\begin{pmatrix}\mathcal{V}_{\bbm,\infty}^{(w)}&0\\0&V_{\infty,0}^{(J_\infty,t_0)}\end{pmatrix}.
\end{align}
The tail observation entry is $1$ because the supremum includes elapsed time zero. The tail integral with $q=0$ is obtained from \eqref{eq:Vinfq_infty}. For forcing of the form $\cQ\psi$, replace the finite integral block by $D_\nu\mathcal{V}_{\bbm,\infty}Q_{\bbm}D_\nu^{-1}$ and the tail entry by $V_{\infty,q}^{(J_\infty,t_0)}$; this defines $\bV_{q,\infty}$ as in \eqref{eq:bUm_bVm_def}.
The sum of the entries of each block vector is its weighted sequence norm. Taking matrix $1$-norms therefore gives
\begin{align}\label{eq:WVq_infty}
 \bm{W_\infty}&\bydef\bm{W^{(J_\infty,t_0)}}=\|B_\infty\bU_\infty\|_1,\\
 \bm{V_{0,\infty}}&\bydef\bm{V_0^{(J_\infty,t_0)}}=\|B_\infty\bV_{0,\infty}\|_1,\qquad
 \bm{V_{q,\infty}}\bydef\bm{V_q^{(J_\infty,t_0)}}=\|B_\infty\bV_{q,\infty}\|_1.\notag
\end{align}
Letting $T\to\infty$ proves \eqref{eq:WVbounds} on the infinite interval: they control the homogeneous semigroup, the integral without an additional multiplier, and the integral with the multiplier $\cQ$, respectively.

\begin{remark}[{\bf Truncation for the infinite step}]\label{rem:truncation_infty}
 A small truncation may leave slowly decaying modes in the tail, making \eqref{eq:Vinfq_infty} too large to verify $s_\infty>0$. Increasing the truncation moves these modes into the finite block. The positivity condition must be checked for the selected truncation and coupling data.
\end{remark}

\subsection{Mode-resolved equilibrium residual}\label{app:global_residual_bounds}

The residual contribution can be sharpened by retaining its individual
modes. Set $w_{\bk}\bydef\alpha_{\bk}\nu^{\bk}$ and define
\[
 b_{f,\infty}\bydef
 \begin{pmatrix}
 \mathcal{V}_{\bbm,\infty}^{(w)}D_\nu
     |f(\bu_*)^{(\bbm)}|\\[1mm]
 \displaystyle\sum_{\bk\notin\FK}
     w_{\bk}\frac{|f(\bu_*)_{\bk}|}{-\mu_{\bk}}
 \end{pmatrix}.
\]
For each tail mode, the integral of the constant forcing is bounded by
$|f(\bu_*)_{\bk}|/(-\mu_{\bk})$. Substitution into the same coupling
inequality gives
\begin{align}\label{eq:residual_volterra_resolved}
 \left\|\int_{t_0}^{\cdot}e^{A_\infty(\cdot-s)}f(\bu_*)\,ds\right\|_{X_\infty}
 &\le\|B_\infty b_{f,\infty}\|_1
 \le\bm{V_{0,\infty}}\|f(\bu_*)\|_\nu.
\end{align}
Hence the smaller verified quantity $\|B_\infty b_{f,\infty}\|_1$ may replace
the residual term in~\eqref{eq:Y_global} and the subsequent radii tests.
For an autonomous interval of length $h_a$, use the corresponding finite
integral block and replace $1/(-\mu_{\bk})$ by
$(1-e^{\mu_{\bk}h_a})/(-\mu_{\bk})$.

\subsection{Intermediate linear and endpoint bounds}\label{app:intermediate_endpoint_bounds}

\paragraph{Finite flow enclosures on an added interval.}
Use a fixed truncation $\bbm_a$ satisfying
$\mu_{\bbm_a}=\sup_{\bk\notin\FKa}\mu_{\bk}<0$, as in
\eqref{eq:mu_bbm_def}, with
$\FKa\bydef\{\bk\ge0:\bk<\bbm_a\}$ and weight matrix $D_\nu^{(a)}$.
The finite coefficient matrix $A_{\bbm_a}(t)$ is the restriction of
$\cL+\cQ D\cN(\bu_{M+k}(t))$ to $\FKa$, as in Section~\ref{sec:one-step}.
Choose a numerical diagonalization
$\bA_k=V^{(a,k)}\Lambda^{(a,k)}(V^{(a,k)})^{-1}$.
For elapsed time $r$, write
$\widehat C_k^{(\bbm_a)}(r)=V^{(a,k)}e^{\Lambda^{(a,k)}r}(V^{(a,k)})^{-1}$.
Here $C_{M+k}^{(\bbm_a)}(t,s)$ is the fundamental solution of
the finite system. In the following bounds,
$|C_{M+k}^{(\bbm_a)}|_\infty$ denotes the componentwise
supremum of $|C_{M+k}^{(\bbm_a)}(t,\tau_{M+k-1})|$ over $J_{M+k}$.
Let $\triangle_{\sup}^{(a,k)}$ bound
$|(V^{(a,k)})^{-1}(A_{\bbm_a}(t)-\bA_k)|$ throughout $J_{M+k}$.
The signed coefficients are transformed before taking this bound.
Use the matrices $\tV_m^{(\bbm)}$ and $\widehat V_m^{(\bbm)}$
of \eqref{eq:bV-esitmates}, with $m=M+k$, $\bbm=\bbm_a$,
and step length $h_{a,k}$; below the latter is denoted
$\widehat V_{a,k}^{(\bbm_a)}$.
Set
\begin{align}\label{eq:sigma_aut}
 K_{a,k}&\bydef\tV_{a,k}^{(\bbm_a)}\triangle_{\sup}^{(a,k)},
 &B_{\mathrm{neu},a,k}&\bydef I-K_{a,k}.
\end{align}
We verify $\|K_{a,k}\|_\infty<1$. Applying the variation of constants formula as in
Section~\ref{sec:one-step}, with a time-dependent matrix error, gives
\begin{align}\label{eq:Va_aut}
 |C_{M+k}^{(\bbm_a)}|_\infty
 &\le B_{\mathrm{neu},a,k}^{-1}
       |\widehat C_k^{(\bbm_a)}|_{\infty,[0,h_{a,k}]},
 &\mathcal V_{\bbm_a,M+k}
 &\bydef B_{\mathrm{neu},a,k}^{-1}\widehat V_{a,k}^{(\bbm_a)}.
\end{align}
At the endpoint, the same integral bounds the difference from the signed
approximate exponential. Since the true finite matrix is real, taking
the real part of the approximate exponential also covers diagonalizations
with complex conjugate eigenvalues. Hence
\begin{align}\label{eq:Phi_endpt_enclosure}
 \Phi_{\bbm_a,M+k}&\bydef
 \Re\widehat C_k^{(\bbm_a)}(h_{a,k})+[-R_{a,k},R_{a,k}],\\
 R_{a,k}&\bydef K_{a,k}B_{\mathrm{neu},a,k}^{-1}
       |\widehat C_k^{(\bbm_a)}|_{\infty,[0,h_{a,k}]}.\notag
\end{align}
In particular,
\begin{align}\label{eq:Phi_aut}
 C_{M+k}^{(\bbm_a)}(\tau_{M+k},\tau_{M+k-1})\in\Phi_{\bbm_a,M+k},
 \qquad\varsigma_{a,k}\bydef e^{\mu_{\bbm_a}h_{a,k}}.
\end{align}
\begin{remark}[{\bf Intermediate truncation}]\label{rem:aut_truncation}
 Moving slowly decaying modes into the finite block can reduce the tail
 integrals, at the cost of larger finite matrices. The selected block must
 satisfy the linear invertibility and positivity conditions and the nonlinear radii inequalities.
\end{remark}
This construction verifies a finite flow without a backward integration.
A verified Chebyshev fundamental solution can be substituted.
When the reference is constant at $\bu_*$ and all lengths are $h_a$, these
quantities can be computed once and reused.

\paragraph{Bounds within a step and at its endpoint.}
For the sequential remainder, choose a fixed real invertible matrix
$V_a$ in weighted finite coordinates, scaled so that $\|V_a\|_1\le1$.
It is a coordinate basis, distinct from the local diagonalizations $V^{(a,k)}$.
The identity is always an admissible choice. Define
\[
 z_a(v)\bydef\begin{pmatrix}
 |V_a^{-1}D_\nu^{(a)}v^{(\bbm_a)}|\\\|v^{(\infty)}\|_\nu
 \end{pmatrix},\qquad \|v\|_\nu\le\|z_a(v)\|_1.
\]
For a trajectory $w\in X_{M+k}$, use the corresponding modewise
supremum profile
\[
 z_a(w)\bydef\begin{pmatrix}
 |V_a^{-1}D_\nu^{(a)}w^{(\bbm_a)}|_\infty\\
 \|w^{(\infty)}\|_{X_{M+k}}
 \end{pmatrix}.
\]
The same normalization gives $\|w\|_{X_{M+k}}\le\|z_a(w)\|_1$.
The homogeneous observation matrices use these coordinates for both
input and output. The forcing matrices use physical weighted input
coordinates and $z_a$ output coordinates.

Compute the finite within-step bound after transforming the signed
fundamental solution, and set
\begin{align}\label{eq:Ua_two}
 \bU_{a,k}^{\mathrm{sup}}&\bydef
 \operatorname{diag}(\Phi_{\sup,a,k}^{(V)},1),\\
 \bU_{a,k}^{\mathrm{ept}}&\bydef
 \operatorname{diag}\bigl(
 |V_a^{-1}D_\nu^{(a)}\Phi_{\bbm_a,M+k}(D_\nu^{(a)})^{-1}V_a|,
 \varsigma_{a,k}\bigr).\notag
\end{align}
Here $\Phi_{\sup,a,k}^{(V)}$ bounds the modewise time supremum of
$|V_a^{-1}D_\nu^{(a)}C_{M+k}^{(\bbm_a)}(t,\tau_{M+k-1})
(D_\nu^{(a)})^{-1}V_a|$.
Transform the finite-tail coupling vectors accordingly: the
physical tail-to-finite vector is multiplied by $|V_a^{-1}|$, and the
finite-to-tail row by $|V_a|$. With these vectors, let
\begin{align}\label{eq:coupling_aut}
 \mathsf G_{a,k}^{(V)}&\bydef
 \begin{pmatrix}0&c_{\mathrm{in},a,k}^{(V)}\\
 (c_{\mathrm{out},a,k}^{(V)})^T&g_{22,a,k}\end{pmatrix},
 &B_{a,k}^{(V)}&\bydef(I-\mathsf G_{a,k}^{(V)})^{-1}.
\end{align}
Its positive Schur complement is verified on every step.
The observation and endpoint matrices are
\begin{align}\label{eq:aut_endpoint_resolved}
 U_{a,k}&\bydef B_{a,k}^{(V)}\bU_{a,k}^{\mathrm{sup}},
 &H_{a,k}&\bydef\bU_{a,k}^{\mathrm{ept}}+\mathsf G_{a,k}^{(V)}U_{a,k}.
\end{align}
Indeed, the within-step observation $x$ satisfies
$x\le\bU_{a,k}^{\mathrm{sup}}p+\mathsf G_{a,k}^{(V)}x$.
The endpoint satisfies
$x_{\mathrm{ept}}\le\bU_{a,k}^{\mathrm{ept}}p+\mathsf G_{a,k}^{(V)}x$;
substituting the first bound proves~\eqref{eq:aut_endpoint_resolved}.
Physical scalar bounds can therefore be taken as
\begin{align}\label{eq:Wa_two}
 \bm{W_{a,k}}&\bydef\|U_{a,k}\operatorname{diag}(|V_a^{-1}|,1)\|_1,
 &\bm{W_{a,k}^{\mathrm{ept}}}
 &\bydef\|H_{a,k}\operatorname{diag}(|V_a^{-1}|,1)\|_1.
\end{align}
These are coarse scalar bounds; the correlation-preserving construction
below uses the signed initial-data product directly.
Let $V_{0,a,k}$ and $V_{q,a,k}$ be the resolved forcing matrices.
The notation $\mathcal V_{\bbm_a,M+k}^{(w)}$ and
$\mathcal V_{\bbm_a,M+k}^{(w,q)}$ means
$D_\nu^{(a)}\mathcal V_{\bbm_a,M+k}(D_\nu^{(a)})^{-1}$ and
$D_\nu^{(a)}\mathcal V_{\bbm_a,M+k}Q_{\bbm_a}(D_\nu^{(a)})^{-1}$,
respectively, as in \eqref{eq:bUm_bVm_def}.
Their finite uncoupled blocks are, respectively,
$|V_a^{-1}|\mathcal V_{\bbm_a,M+k}^{(w)}$ and
$|V_a^{-1}|\mathcal V_{\bbm_a,M+k}^{(w,q)}$; their tail entries are the
ordinary and multiplier-weighted tail integrals. The coupling correction
is $B_{a,k}^{(V)}$. Thus
\begin{align}\label{eq:V0q_aut}
 \bm{V_{0,a,k}}\bydef\|V_{0,a,k}\|_1,
 \qquad\bm{V_{q,a,k}}\bydef\|V_{q,a,k}\|_1
\end{align}
bound the corresponding operator norms of the integral terms.
For the full defect
$F_{1,M+k}=\bu_{M+k}'-f(\bu_{M+k})$, retain the mode-resolved ordinary
integral as in~\eqref{eq:residual_volterra_resolved}.
Denote its uncoupled bound by $b_{f,a,k}$, so that
$B_{a,k}^{(V)}b_{f,a,k}$ bounds the resolved defect forcing.
The tail entries use each mode's own integral times
$|F_{1,M+k,\bk}|_\infty$. For a time-dependent polynomial, a coefficient
majorant may be used for this last time supremum.
\paragraph{Retaining the initial error.}
For the decomposition \eqref{eq:Ek_def}, use the intermediate
truncation throughout:
\[
 E_{M+k}\bydef\operatorname{diag}\bigl(
 C_{M+k}^{(\bbm_a)}(\tau_{M+k},\tau_{M+k-1}),
 e^{\cL_{\infty,a}h_{a,k}}\bigr),
\]
Set $\widetilde C_{M+k}\bydef
U_{M+k}(\tau_{M+k},\tau_{M+k-1})-E_{M+k}$,
where $\cL_{\infty,a}$ is the restriction of $\cL$ to the modes
outside $\FKa$. All tail components in this construction refer to
this same truncation.
Let $\bu_M^{\mathrm{ept}}$ be the stored nominal endpoint and let
$r_M^{\mathrm{ept}}$ bound the true endpoint error about it. The gap from
the exact polynomial trace is included in this radius.
Write the initial error as $\xi$, with $\|\xi\|_\nu\le r_M^{\mathrm{ept}}$.
The endpoint error after $k$ added steps is enclosed as
\begin{align}\label{eq:intermediate_affine_enclosure}
 \tilde u(\tau_{M+k})-\bu_{M+k}^{\mathrm{ept}}
 &\in E_{M+k}\cdots E_{M+1}\xi+\mathcal R_k^{(a)},\\
 \mathcal R_k^{(a)}
 &\bydef\{v:\|V_a^{-1}D_\nu^{(a)}v^{(\bbm_a)}\|_1
                 \le\gamma_{k,1}^{(a)},\;
                 \|v^{(\infty)}\|_\nu\le\gamma_{k,2}^{(a)}\}.\notag
\end{align}
The tail part of $\xi$ is included, with its joint finite-tail norm budget;
it is not set to zero. The two-component remainder starts at
$\gamma_0^{(a)}=0$. The finite block of the exact product is enclosed by the signed
interval product $\Phi_{\mathrm{end},M+k,M}^{(\bbm_a)}$ of
Section~\ref{sec:multistep}.
Its nonnegative input observation in the fixed frame is
\[
 R_{a,k}^{(\mathrm{bare})}\bydef\operatorname{diag}\bigl(
 |V_a^{-1}D_\nu^{(a)}\Phi_{\mathrm{end},M+k,M}^{(\bbm_a)}(D_\nu^{(a)})^{-1}|,
 |\varsigma_{M+k,M}|\bigr).
\]
Here $\Phi_{\mathrm{end},M+k,M}^{(\bbm_a)}$ is the signed
interval product from \eqref{eq:bPhi}, with $\Phi_{\mathrm{end},M+k}^{(\bbm_a)}
\bydef\Phi_{\bbm_a,M+k}$ from \eqref{eq:Phi_aut};
its empty product is the identity.
The tail factor $\varsigma_{M+k,M}$ is defined by
\eqref{eq:theta_def} with this fixed truncation, and
$\varsigma_{M,M}=1$. For interval matrices, absolute values in these
formulas mean the entrywise magnitude bound $\operatorname{mag}$.
No inverse of a cumulative endpoint product is used.
For a within-step observation of this initial contribution, choose
verified matrices $S_{a,k}^{(0)}$ and $S_{a,k}^{(V,0)}$ whose finite blocks
bound, respectively,
\[
 D_\nu^{(a)}|C_{M+k}^{(\bbm_a)}(\cdot,\tau_{M+k-1})
                   \Phi_{\mathrm{end},M+k-1,M}^{(\bbm_a)}|_\infty(D_\nu^{(a)})^{-1},
\]
and
\[
 |V_a^{-1}D_\nu^{(a)}C_{M+k}^{(\bbm_a)}(\cdot,\tau_{M+k-1})
                   \Phi_{\mathrm{end},M+k-1,M}^{(\bbm_a)}(D_\nu^{(a)})^{-1}|_\infty.
\]
Their tail diagonal entries are $|\varsigma_{M+k-1,M}|$ and the
other blocks vanish. The signed product is evaluated before its magnitude.
The incoming nominal-point gap is
$\epsilon_{a,k}=\bu_{M+k}(\tau_{M+k-1})-\bu_{M+k-1}^{\mathrm{ept}}$.
For initial values in the preceding affine enclosure
$\bu_{M+k-1}^{\mathrm{ept}}+E_{M+k-1}\cdots E_{M+1}\xi
+\mathcal R_{k-1}^{(a)}$, with $\|\xi\|_\nu\le r_M^{\mathrm{ept}}$,
a uniform local defect bound, also valid on its intersection with
the scalar endpoint ball, is
\begin{align}\label{eq:Y_aut}
 Y_{M+k}^{(a)}
 &\bydef r_M^{\mathrm{ept}}\bigl(
 \|S_{a,k}^{(0)}\|_1+
 \|\mathsf G_{a,k}^{(V)}B_{a,k}^{(V)}S_{a,k}^{(V,0)}\|_1\bigr)\notag\\
 &\quad+\mathbf1^T\mathfrak b(U_{a,k})\gamma_{k-1}^{(a)}
       +\|U_{a,k}z_a(\epsilon_{a,k})\|_1
       +\|B_{a,k}^{(V)}b_{f,a,k}\|_1.
\end{align}
The first line bounds the uncoupled initial contribution and its
within-step coupling correction. The second bounds the inherited
remainder, the initial gap, and the full defect forcing.
The defects are those of \eqref{eq:local_defects}: the differential
residual is $y_1^{M+k}=F_{1,M+k}$ and the incoming mismatch is
measured by $\epsilon_{a,k}$ together with the verified endpoint error.
Thus \eqref{eq:Y_aut} is the sequential counterpart of
\eqref{eq:Ym_bound}, with propagation of the residual through the integral without an additional factor $\cQ$.

\paragraph{Updating the endpoint enclosure.}
We use $\mathfrak b$ and $\operatorname{pair}$ from
\eqref{eq:block_norm_matrix}, now with finite block $\FKa$.
For a two-column nonnegative matrix, $\operatorname{rowmax}$ denotes
the vector of its two row maxima. If $p\ge0$ and $\|p\|_1\le r$,
then $Ap\le r\operatorname{rowmax}(A)$. This retains the joint
finite-tail norm budget of the incoming error.
Let
\[
 \epsilon_{a,k}^{\mathrm{ept}}\bydef
 \bu_{M+k}(\tau_{M+k})-\bu_{M+k}^{\mathrm{ept}},
 \qquad N_k\bydef(3a_k+r_{M+k})r_{M+k}^2.
\]
Splitting the true endpoint operator as $E_{M+k}+\widetilde C_{M+k}$
and using the local nonlinear bound gives
\begin{align}\label{eq:intermediate_remainder_update}
 \gamma_k^{(a)}
 &\bydef\mathfrak b(H_{a,k})\gamma_{k-1}^{(a)}\notag\\
 &\quad+r_M^{\mathrm{ept}}\operatorname{rowmax}
       \mathfrak b(\mathsf G_{a,k}^{(V)}U_{a,k}R_{a,k-1}^{(\mathrm{bare})})\notag\\
 &\quad+\mathfrak b(H_{a,k})\operatorname{pair}(z_a(\epsilon_{a,k}))
       +\operatorname{pair}(B_{a,k}^{(V)}b_{f,a,k})\notag\\
 &\quad+\operatorname{rowmax}\mathfrak b(V_{q,a,k})N_k
       +\operatorname{pair}(z_a(\epsilon_{a,k}^{\mathrm{ept}})).
\end{align}
In particular, $H_{a,k}$ is assembled before its block norms are taken;
this retains the association between each finite column and its decay.
\begin{lemma}[{\bf Affine endpoint enclosure}]\label{lem:affine_enclosure}
Assume that, for $k=1,\dots,M_a$, the local conditions \eqref{eq:radii_aut} hold uniformly for initial values in the preceding affine enclosure, that $\|K_{a,k}\|_\infty<1$, and that the Schur complement associated with $\mathsf G_{a,k}^{(V)}$ is positive. Let $\gamma_k^{(a)}$ be defined by \eqref{eq:intermediate_remainder_update} with $\gamma_0^{(a)}=0$. Then \eqref{eq:intermediate_affine_enclosure} holds for $k=0,\dots,M_a$.
\end{lemma}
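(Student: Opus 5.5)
The proof goes by induction on $k$. For $k=0$ the claim holds by definition: the endpoint error about $\bu_M^{\mathrm{ept}}$ is $\xi$, with $\|\xi\|_\nu\le r_M^{\mathrm{ept}}$, the empty product $E_M\cdots E_{M+1}$ is the identity, and $\gamma_0^{(a)}=0$, so $\mathcal R_0^{(a)}=\{0\}$. Now suppose the enclosure holds at step $k-1$. Then the incoming value of step $M+k$ lies in $\bu_{M+k-1}^{\mathrm{ept}}+E_{M+k-1}\cdots E_{M+1}\xi+\mathcal R_{k-1}^{(a)}$. By hypothesis, \eqref{eq:radii_aut} holds uniformly on this set. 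The Radii Polynomial Theorem~\ref{thm:RadiiPolynomial}, in its one-component form, then gives the validated solution $\tilde u$ on $J_{M+k}$ with $\|\tilde u-\bu_{M+k}\|_{X_{M+k}}\le r_{M+k}$.

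Next set $w=\tilde u-\bu_{M+k}$ and write its endpoint by variation of constants, as in the proof of Proposition~\ref{prop:intermediate_enclosure}:
\[
w(\tau_{M+k})=U_{M+k}(\tau_{M+k},\tau_{M+k-1})w(\tau_{M+k-1})+\int_{\tau_{M+k-1}}^{\tau_{M+k}}U_{M+k}(\tau_{M+k},s)\bigl(\cQ R(w)-F_{1,M+k}\bigr)(s)\,ds,
\]
where $R(w)$ is the cubic remainder. The incoming error splits as
\[
w(\tau_{M+k-1})=E_{M+k-1}\cdots E_{M+1}\xi+\rho-\epsilon_{a,k},\qquad \rho\in\mathcal R_{k-1}^{(a)}.
\]
For the propagator, use $U_{M+k}(\tau_{M+k},\tau_{M+k-1})=E_{M+k}+\widetilde C_{M+k}$. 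The $E_{M+k}$ part acting on the product term yields exactly $E_{M+k}\cdots E_{M+1}\xi$, which is the signed affine part. After adding $\epsilon_{a,k}^{\mathrm{ept}}$ to recenter at $\bu_{M+k}^{\mathrm{ept}}$, every other contribution has to be shown to lie in $\mathcal R_k^{(a)}$. This means checking that its profile $\operatorname{pair}(z_a(\cdot))$ is dominated by the corresponding term of \eqref{eq:intermediate_remainder_update}.

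The contributions are handled one at a time.

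\emph{Inherited remainder and incoming gap.} $\rho$ and $\epsilon_{a,k}$ are propagated by the full endpoint operator. The endpoint inequality $x_{\mathrm{ept}}\le\bU^{\mathrm{ept}}_{a,k}p+\mathsf G^{(V)}_{a,k}U_{a,k}p$ from \eqref{eq:aut_endpoint_resolved} gives $z_a$-profiles bounded by $H_{a,k}z_a(\cdot)$. Applying $\operatorname{pair}$ then yields the terms $\mathfrak b(H_{a,k})\gamma_{k-1}^{(a)}$ and $\mathfrak b(H_{a,k})\operatorname{pair}(z_a(\epsilon_{a,k}))$. Here one uses that $\operatorname{pair}(z_a(\rho))\le\gamma_{k-1}^{(a)}$ by the definition of $\mathcal R_{k-1}^{(a)}$.

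\emph{Coupling of the affine part.} The term $\widetilde C_{M+k}E_{M+k-1}\cdots E_{M+1}\xi$ is the coupling part of the one-step inequality applied to the initial datum $E\cdots E\xi$. Its profile is therefore bounded by $\mathsf G^{(V)}_{a,k}U_{a,k}$ times the profile of that datum. That profile is in turn bounded by $R_{a,k-1}^{(\mathrm{bare})}z(\xi)$, by the magnitude argument of Lemma~\ref{lem:endpoint_propagation}. Since $\|z(\xi)\|_1\le r_M^{\mathrm{ept}}$, the rowmax inequality gives the second line of \eqref{eq:intermediate_remainder_update}.

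\emph{Defect and nonlinear terms.} The defect integral is controlled by the resolved forcing bound $B^{(V)}_{a,k}b_{f,a,k}$. The nonlinear integral is bounded by $\mathfrak b(V_{q,a,k})$ times a nonnegative profile of total mass at most $N_k$. This uses the Banach-algebra estimate $\|R(w)\|_{X}\le(3a_k+r_{M+k})r_{M+k}^2$, followed by rowmax again.

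\emph{Recentering.} Finally, the recentering term gives $\operatorname{pair}(z_a(\epsilon_{a,k}^{\mathrm{ept}}))$. Summing all contributions by subadditivity of $z_a$ and $\operatorname{pair}$ closes the induction.

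The main obstacle is bookkeeping across coordinate frames. The homogeneous matrices act in $V_a$-coordinates, while the forcing matrices take physical weighted input. One must check that each inequality is applied in matching coordinates, for instance the factor $\operatorname{diag}(|V_a^{-1}|,1)$ on the inputs, and that $\|v\|_\nu\le\|z_a(v)\|_1$ is used consistently, which relies on $\|V_a\|_1\le1$. A second point needs care: the uniform validity on the affine set intersected with the scalar ball must be invoked correctly, so that the same radius $r_{M+k}$ controls $N_k$ for every admissible $\xi$.
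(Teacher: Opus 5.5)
Your proposal is correct and follows the paper's proof essentially step for step. Both argue by induction on $k$, split the incoming error as $E_{M+k-1}\cdots E_{M+1}\xi+\varrho-\epsilon_{a,k}$, and split the endpoint propagator as $E_{M+k}+\widetilde C_{M+k}$. Each remaining contribution is then matched with a term of \eqref{eq:intermediate_remainder_update}: $\mathsf G_{a,k}^{(V)}U_{a,k}R_{a,k-1}^{(\mathrm{bare})}$ with $\operatorname{rowmax}$, $H_{a,k}$ for the inherited remainder and the gap, the resolved defect, the cubic remainder, and recentering. The only difference is that you make explicit the one-component Radii Polynomial step giving $\|\tilde u-\bu_{M+k}\|_{X_{M+k}}\le r_{M+k}$, which the paper invokes implicitly.
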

\begin{proof}
For $k=0$, the claim is the definition of $\xi$. Assume that it holds for $k-1$, and write $\tilde u(\tau_{M+k-1})-\bu_{M+k-1}^{\mathrm{ept}}=E_{M+k-1}\cdots E_{M+1}\xi+\varrho$ with $\varrho\in\mathcal R_{k-1}^{(a)}$. Let $w\bydef\tilde u-\bu_{M+k}$ on $J_{M+k}$, so that $w(\tau_{M+k-1})=E_{M+k-1}\cdots E_{M+1}\xi+\varrho-\epsilon_{a,k}$. As in the proof of Proposition~\ref{prop:intermediate_enclosure},
\begin{align*}
 w(\tau_{M+k})&=U_{M+k}(\tau_{M+k},\tau_{M+k-1})w(\tau_{M+k-1})\\
 &\quad+\int_{\tau_{M+k-1}}^{\tau_{M+k}}U_{M+k}(\tau_{M+k},s)\\
 &\qquad\quad\bigl(\cQ[\cN(\bu_{M+k}+w)-\cN(\bu_{M+k})-D\cN(\bu_{M+k})w]-F_{1,M+k}\bigr)(s)\,ds.
\end{align*}
Splitting $U_{M+k}(\tau_{M+k},\tau_{M+k-1})=E_{M+k}+\widetilde C_{M+k}$ in the term containing $\xi$ gives the leading term $E_{M+k}\cdots E_{M+1}\xi$ of \eqref{eq:intermediate_affine_enclosure}. The remaining terms are bounded in the coordinates $z_a$ and grouped by $\operatorname{pair}$. First, $z_a(\widetilde C_{M+k}E_{M+k-1}\cdots E_{M+1}\xi)\le\mathsf G_{a,k}^{(V)}U_{a,k}R_{a,k-1}^{(\mathrm{bare})}z(\xi)$, and $\|z(\xi)\|_1\le r_M^{\mathrm{ept}}$ gives the $\operatorname{rowmax}$ term in the second line of \eqref{eq:intermediate_remainder_update}. Second, the endpoint matrix $H_{a,k}$ of \eqref{eq:aut_endpoint_resolved} bounds the contributions of $\varrho$ and $\epsilon_{a,k}$, which gives $\mathfrak b(H_{a,k})\gamma_{k-1}^{(a)}$ and $\mathfrak b(H_{a,k})\operatorname{pair}(z_a(\epsilon_{a,k}))$. Third, the residual integral is bounded by $\operatorname{pair}(B_{a,k}^{(V)}b_{f,a,k})$, and, since $\tilde u\in B_{r_{M+k}}(\bu_{M+k})$ by \eqref{eq:radii_aut}, the cubic estimate of Proposition~\ref{prop:intermediate_enclosure} bounds the nonlinear integral by $\operatorname{rowmax}\mathfrak b(V_{q,a,k})N_k$. Finally, replacing $\bu_{M+k}(\tau_{M+k})$ by $\bu_{M+k}^{\mathrm{ept}}$ adds $\operatorname{pair}(z_a(\epsilon_{a,k}^{\mathrm{ept}}))$. Summing these bounds gives $\gamma_k^{(a)}$ and proves the claim for $k$.
\end{proof}
A physical
endpoint radius is
\begin{align}\label{eq:intermediate_endpoint_radius}
 r_{M+k}^{\mathrm{ept}}
 &\bydef\min\Bigl\{
 r_M^{\mathrm{ept}}\max\bigl(
 \|D_\nu^{(a)}\operatorname{mag}(\Phi_{\mathrm{end},M+k,M}^{(\bbm_a)})
                   (D_\nu^{(a)})^{-1}\|_1,
 |\varsigma_{M+k,M}|\bigr)
       +\mathbf1^T\gamma_k^{(a)},\notag\\
 &\hspace{25mm}r_{M+k}+\|\epsilon_{a,k}^{\mathrm{ept}}\|_\nu\Bigr\}.
\end{align}
The second bound follows directly from the local invariant ball.

\subsection{Matching thresholds and numerical predictors}\label{app:global_predictors}
The quantities in this subsection are used only to choose the number and length of the intermediate steps and initial guesses for the radii. They are not part of the proof: every accepted choice is checked with the rigorous inequalities of Section~\ref{sec:global_in_time}.

Define $p_\infty(r;\rho)\bydef\bm{W_\infty}\rho+\bm{V_{0,\infty}}\|f(\bu_*)\|_\nu+\int_0^rZ_\infty(s)\,ds-r$. Its value at zero is the defect bound, and its initial slope is $-1$. The local threshold in the matching residual is
\begin{align}\label{eq:rho_star}
 \rho_*\bydef\sup\!\left\{\rho\ge0:\inf_{r>0}p_\infty(r;\rho)\le0\right\}.
\end{align}
This expresses the balance between the incoming residual and the available nonlinear margin. In particular, a positive threshold requires
\[
 \bm{V_{0,\infty}}\|f(\bu_*)\|_\nu
 <\sup_{r>0}\left(r-\int_0^rZ_\infty(s)\,ds\right).
\]
A candidate with $\rho_{M_a}<\rho_*$ is accepted only after a radius is selected and both inequalities in \eqref{eq:radii_infty} are verified. The full product-space test must also include the incoming endpoint perturbations.

\begin{example}[{\bf Cubic nonlinearity}]\label{ex:cubic_inf}
 For $\cN(u)=\pm u^3$, write $u=\bu_*+v$ with $\|v\|_{X_\infty}\le r$. Then
 $u^2-\bu_*^2=2\bu_*v+v^2$, so the Banach algebra inequality gives
 $\|u^2-\bu_*^2\|_{X_\infty}\le2\|\bu_*\|_\nu r+r^2$. Since $D\cN(u)w=\pm3u^2w$, we obtain
 \begin{align}\label{eq:Z_global_cubic}
 Z_\infty(r)=3\bm{V_{q,\infty}}r(2\|\bu_*\|_\nu+r).
 \end{align}
 Integrating this expression gives the cubic form of \eqref{eq:radii_infty} stated in Section~\ref{sec:infty_step}.
 For a numerical predictor, omit the nonnegative cubic term. The resulting quadratic has discriminant
 $\triangle_\infty\bydef1-12\bm{V_{q,\infty}}\|\bu_*\|_\nu Y_\infty^{(M_a)}$.
 Equivalently, the maximum of $r-3\bm{V_{q,\infty}}\|\bu_*\|_\nu r^2$ is
 $1/(12\bm{V_{q,\infty}}\|\bu_*\|_\nu)$. The full cubic has no greater margin, so
 \begin{align}\label{eq:rho_star_cubic}
 \rho_*\le\frac1{\bm{W_\infty}}\left(
 \frac1{12\bm{V_{q,\infty}}\|\bu_*\|_\nu}
 -\bm{V_{0,\infty}}\|f(\bu_*)\|_\nu\right)
 \end{align}
 whenever $\bm{V_{q,\infty}}\|\bu_*\|_\nu>0$ and the local feasible set is nonempty. If $\triangle_\infty>0$, the smaller quadratic root
 $(1-\sqrt{\triangle_\infty})/(6\bm{V_{q,\infty}}\|\bu_*\|_\nu)$
 supplies an initial guess for the radius. Validation still uses the full cubic and $Z_\infty(r_\infty)<1$.
\end{example}

\paragraph{Constant-center predictors.}
The constant reference $\bu_{M+k}\equiv\bu_*$ is a special choice that supplies simple predictors. Its linear residual recurrence alone does not enclose the nonlinear trajectory.
Let $\bar{\widehat u}=(\bu_1,\ldots,\bu_M,\bu_*,\ldots,\bu_*,\bu_*)$
be the full approximation with constant intermediate and infinite
components. For $\bu_{M+k}\equiv\bu_*$ and $h_{a,k}=h_a$,
evaluation at this approximation gives
\begin{align}\label{eq:residual_aut}
 \bu_*-\sigma_{M+k}^{(a)}(\bar{\widehat u})
 =e^{A_\infty h_a}(\bu_*-\sigma_{M+k-1}^{(a)}(\bar{\widehat u}))
  -\int_0^{h_a}e^{A_\infty s}f(\bu_*)\,ds.
\end{align}
Put
\[
 \rho_k^{(0)}\bydef
 \|\bu_*-\sigma_{M+k}^{(a)}(\bar{\widehat u})\|_\nu,
 \qquad
 \rho_0^{(0)}\bydef\|\bu_*-\sigma_M(\bu)\|_\nu.
\]
Here $\bu=(\bu_1,\ldots,\bu_M)$ in the second formula.
These quantities are residuals of the map at the approximation,
and are distinct from the true-endpoint bounds $\rho_k$.
For the constant reference and step length, omit the step index from
$\bm{W_{a,k}^{\mathrm{ept}}}$, $\bm{W_{a,k}}$, and
$\bm{V_{\star,a,k}}$, writing $\bm{W_a^{\mathrm{ept}}}$,
$\bm{W_a}$, and $\bm{V_{\star,a}}$, respectively. Then
\begin{align}\label{eq:rho_recur}
 \rho_k^{(0)}\le\bm{W_a^{\mathrm{ept}}}\rho_{k-1}^{(0)}
                         +\bm{V_{0,a}}\|f(\bu_*)\|_\nu.
\end{align}
If the constant-center endpoint bound satisfies
\begin{align}\label{eq:Waept_lt_1}
 0<\bm{W_a^{\mathrm{ept}}}<1,
\end{align}
write $w=\bm{W_a^{\mathrm{ept}}}$,
$\beta=\bm{V_{0,a}}\|f(\bu_*)\|_\nu$, and
$\rho_a^\infty=\beta/(1-w)$. The predictor satisfies
\begin{align}\label{eq:rho_decay}
 \rho_k^{(0)}\le w^k\rho_0^{(0)}+\rho_a^\infty(1-w^k).
\end{align}
When $\rho_a^\infty<\tfrac12\rho_*<\rho_0^{(0)}$, a sufficient predictor length is
\begin{align}\label{eq:Ka_estimate}
 M_a^{\mathrm{pred}}\bydef
 \left\lceil\frac{\log((\rho_0^{(0)}-\rho_a^\infty)/
                              (\tfrac12\rho_*-\rho_a^\infty))}{-\log w}\right\rceil.
\end{align}
The factor $\tfrac12$ is a safety margin; any factor in $(0,1)$ can be used.
For this special reference, the local incoming threshold is
\begin{align}\label{eq:rho_step_local}
 \rho_*^{(a)}\bydef\sup\Bigl\{\rho\ge0:\inf_{r>0}
  [\bm{W_a}\rho+\bm{V_{0,a}}\|f(\bu_*)\|_\nu
   +\bm{V_{q,a}}(3\|\bu_*\|_\nu r^2+r^3)-r]<0\Bigr\}.
\end{align}
Dropping the cubic term yields the necessary upper bound
\begin{align}\label{eq:rho_star_a_cubic}
 \rho_*^{(a)}\le\frac1{\bm{W_a}}\left(
 \frac1{12\bm{V_{q,a}}\|\bu_*\|_\nu}
                  -\bm{V_{0,a}}\|f(\bu_*)\|_\nu\right),
\end{align}
when the denominator is positive and the feasible set is nonempty.
The mode-resolved residual bound may replace the ordinary scalar term.
The finite-mode heuristic
\begin{align}\label{eq:threshold_ratio}
 \frac{\rho_*^{(a)}}{\rho_*}\sim
       \frac{\bm{V_{q,\infty}}}{\bm{V_{q,a}}}
\end{align}
can guide the choice of length. Actual acceptance still requires the
nonlinear tests and the verified endpoint enclosures above.

\subsection{A product-space application of the Radii Polynomial Theorem}\label{app:global_product_argument}

Instead of the sequential argument of Section~\ref{sec:refined_cor}, one can apply Theorem~\ref{thm:RadiiPolynomial} directly to the combined map on the extended product space
\begin{align}\label{eq:X_extended}
 \widehat{\bm X}\bydef\bm X\times\prod_{k=1}^{M_a}X_{M+k}\times X_\infty,
 \qquad X_{M+k}\bydef\ell^1(C^0(J_{M+k})).
\end{align}
Sequential validation fixes the actual incoming endpoint on each row.
If a contraction of the entire product map is used instead, all incoming
endpoint derivatives must be included. Retain $L_j^i(s)$ as a bound on
$D_i\sigma_j$ and initialize
\begin{align}\label{eq:endpoint_derivative_init}
 L_M^i(s)\bydef Z_M^i(s),\qquad i\le M.
\end{align}
Endpoint evaluation has norm at most one, so these are valid bounds.
For $j=M+k$, the endpoint and within-step bounds give
\begin{align}\label{eq:endpoint_derivative_aut}
 L_j^i(s)&\bydef\bm{W_{a,k}^{\mathrm{ept}}}L_{j-1}^i(s),\quad i<j,
 &L_j^j(s)&\bydef Z_j^{(a)}(s),\\
 \widehat Z_j^i(s)&\bydef\bm{W_{a,k}}L_{j-1}^i(s),\quad i<j,
 &\widehat Z_j^j(s)&\bydef Z_j^{(a)}(s).
 \label{eq:cross_derivative_aut}
\end{align}
For the infinite row, use
\begin{align}\label{eq:cross_derivative_infty}
 \widehat Z_\infty^i(s)&\bydef\bm{W_\infty}L_{M+M_a}^i(s),
 &\widehat Z_\infty^\infty(s)&\bydef Z_\infty(s).
\end{align}
For this alternative, compute defect bounds at the approximation of
the entire product map:
\[
 \|T_j^{(a)}(\bar{\widehat u})-\bu_j\|_{X_j}
 \le\widehat Y_j^{(a)},\qquad
 \|T_\infty(\bar{\widehat u})-\bu_*\|_{X_\infty}
 \le\widehat Y_\infty,
\]
where $\bar{\widehat u}=(\bu_1,\ldots,\bu_{M+M_a},\bu_*)$.
These bounds must include the recursive nominal endpoints
$\sigma_{j-1}^{(a)}(\bar{\widehat u})$; the sequential bounds
$Y_j^{(a)}$ and $Y_\infty^{(M_a)}$ can be reused only if they also
cover these endpoints. The sufficient product-space inequalities are
\begin{align}\label{eq:radii_aut_coupled}
 \widehat Y_j^{(a)}+\sum_{i=1}^j\int_0^{r_i}\widehat Z_j^i(s)\,ds&\le r_j,
 &\sum_{i=1}^j\widehat Z_j^i(r_i)\zeta_i&<\zeta_j,\\
 \widehat Y_\infty+
 \sum_{i=1}^{M+M_a}\int_0^{r_i}\widehat Z_\infty^i(s)\,ds
 +\int_0^{r_\infty}Z_\infty(s)\,ds&\le r_\infty,\notag\\
 \sum_{i=1}^{M+M_a}\widehat Z_\infty^i(r_i)\zeta_i
 +Z_\infty(r_\infty)\zeta_\infty&<\zeta_\infty.
 \label{eq:radii_infty_coupled}
\end{align}
Here all weights, including $\zeta_\infty$, are positive. Choosing
them all equal to one recovers the sufficient row-sum test. As in
Section~\ref{sec:trajectory_validation}, the derivative matrix is
lower triangular, so positive contraction weights exist whenever all
diagonal derivative bounds are below one. The invariant-ball inequalities
must still be checked. Under \eqref{eq:radii_aut_coupled} and \eqref{eq:radii_infty_coupled}, the proof of Theorem~\ref{thm:RadiiPolynomial}, applied to this finite product including its infinite-interval component, gives the same global enclosure as Corollary~\ref{cor:global_existence}.

\end{document}